\DeclareFontFamily{U}{mathx}{\hyphenchar\font45}
\DeclareFontShape{U}{mathx}{m}{n}{
	<5> <6> <7> <8> <9> <10>
	<10.95> <12> <14.4> <17.28> <20.74> <24.88>
	mathx10
}{}
\DeclareSymbolFont{mathx}{U}{mathx}{m}{n}
\DeclareMathAccent{\widecheck}{0}{mathx}{"71}
\def\citep#1#2{\cite[{#1}]{#2}}
\newcommand{\C}{{\mathbb{C}}} % complex numbers
\newcommand{\F}{{\mathbb{F}}} % field, finite field
\newcommand{\N}{{\mathbb{N}}} % natural numbers {1, 2, ...}
\newcommand{\R}{{\mathbb{R}}} % reals
\newcommand{\Z}{{\mathbb{Z}}} % integers
\DeclareSymbolFont{bbold}{U}{bbold}{m}{n}
\DeclareSymbolFontAlphabet{\mathbbold}{bbold}
\newcommand{\bsg}{{\boldsymbol{g}}}
\newcommand{\bsk}{{\boldsymbol{k}}}
\newcommand{\bsq}{{\boldsymbol{q}}}
\newcommand{\bsu}{{\boldsymbol{u}}}
\newcommand{\bsv}{{\boldsymbol{v}}}
\newcommand{\bsx}{{\boldsymbol{x}}}
\newcommand{\bszero}{{\boldsymbol{0}}} % vector of zeros
\newcommand{\bsone}{{\boldsymbol{1}}}  % vector of ones
\newcommand{\bsgamma}{{\boldsymbol{\gamma}}}
\newcommand{\bseta}{{\boldsymbol{\eta}}}
\newcommand{\calD}{{\mathcal{D}}}
\newcommand{\calO}{{\mathcal{O}}}
\newcommand{\setu}{{\mathfrak{u}}}
\newcommand{\setv}{{\mathfrak{v}}}
\newcommand{\rme}{{\mathrm{e}}}
\newcommand{\mcol}{{\mathpunct{:}}}
\newcommand{\wal}{{\rm wal}}
\newcommand{\icomp}{\mathrm{i}}
\newcommand{\abs}[1]{\left\vert#1\right\vert}
\newcommand{\norm}[1]{\left\Vert#1\right\Vert}
\newcommand{\floor}[1]{\left\lfloor #1 \right\rfloor} % floor
\newcommand{\rd}{\,\mathrm{d}} % differential symbol with tiny space in front for use in integrals
\providecommand{\argmin}{\operatorname*{argmin}}
\newcommand{\supp}{\operatorname{supp}}
\newcommand{\tr}{{\rm tr}}
\newcommand{\tpmod}[1]{{\;(\operatorname{mod}\;#1)}}
\providecommand*{\toclevel@author}{999}
\providecommand*{\toclevel@title}{0}
\theoremstyle{plain}
\newtheorem{theorem}{Theorem}
\newtheorem{proposition}{Proposition}
\newtheorem{lemma}{Lemma}
\newtheorem{corollary}{Corollary}
\theoremstyle{definition}
\newtheorem{definition}{Definition}
\theoremstyle{remark}
\newtheorem{remark}{Remark}
\pgfplotsset{every tick label/.append style={font=\scriptsize}}
\newenvironment{customlegend}[1][]{
	\begingroup
	% inits/clears the lists (which might be populated from previous axes):
	\csname pgfplots@init@cleared@structures\endcsname
	\pgfplotsset{#1}
}{
	% draws the legend:
	\csname pgfplots@createlegend\endcsname
	\endgroup
}
\def\addlegendimage{\csname pgfplots@addlegendimage\endcsname}
\pgfplotsset{
	cycle list={%
		{draw=black,mark=star,solid},
		{draw=black, mark=square,solid},%densely dashed},
		{draw=black,mark=+,solid},%dashdotted}, %every mark/.append style={rotate=90},
		{black,mark=o},
		{draw=black, mark=none,solid}}
}
\def\addlegendimage{\csname pgfplots@addlegendimage\endcsname}
\pgfplotsset{
	cycle list={%
		{draw=black,mark=star,solid},
		{draw=black, mark=square,solid},%densely dashed},
		{draw=black,mark=+,solid},%dashdotted}, %every mark/.append style={rotate=90},
		{black,mark=o},
		{draw=black, mark=none,solid}}
}
\definecolor{mycolor1}{rgb}{0.48500,0.70000,1.00000}
\definecolor{mycolor2}{rgb}{0.00000,0.20000,0.50000}
\definecolor{mycolor-alpha1}{rgb}{0.30000,0.45000,0.85000}
\definecolor{mycolor-alpha2}{rgb}{0.05000,0.65000,0.20000}
\definecolor{mycolor-alpha3}{rgb}{0.64000,0.22500,0.75000}
\definecolor{mycolor3}{rgb}{0.10000,0.80000,0.15000}
\definecolor{mycolor4}{rgb}{0.00000,0.50000,0.25000}
\definecolor{mycolor5}{rgb}{0.84000,0.29000,1.00000}
\definecolor{mycolor6}{rgb}{0.44000,0.16500,0.50000}
\definecolor{mycolor2-time}{rgb}{0.85000,0.32500,0.09800}%
\begin{document}

\title{Construction of good polynomial lattice rules in weighted Walsh spaces by an alternative component-by-component construction}

\author{Adrian Ebert\thanks{A.~Ebert, P.~Kritzer, and O.~Osisiogu are supported by the Austrian Science Fund (FWF): Project F5506, 
which is part of the Special Research Program ``Quasi-Monte Carlo Methods: Theory and Applications''.}, 
Peter Kritzer, Onyekachi Osisiogu, Tetiana Stepaniuk\thanks{T.~Stepaniuk is supported by the Alexander von Humboldt Foundation.}}

\date{\today}

\maketitle

\begin{abstract}
We study the efficient construction of good polynomial lattice rules, which are special instances of quasi-Monte Carlo (QMC) methods. 
The integration rules obtained are of particular interest for the approximation of multivariate integrals in weighted Walsh spaces. 
In particular, we study a construction algorithm which assembles the components of the generating vector, which is in this case a vector of polynomials over a finite field, of the polynomial lattice rule in a component-wise fashion. We show that the constructed QMC rules achieve the almost optimal error convergence order in the function spaces under consideration and prove that the obtained error bounds can, under certain conditions on the involved weights, be made independent of the dimension. We also demonstrate that our alternative component-by-component construction, which is independent of the underlying smoothness of the function space, can be implemented relatively easily in a fast manner.
Numerical experiments confirm our theoretical findings.
\end{abstract}

\noindent\textbf{Keywords:} Numerical integration; 
 polynomial lattice points; quasi-Monte Carlo methods; weighted function spaces;  component-by-component construction; 
 fast implementation. 

\noindent\textbf{2010 MSC:} 65D30, 65D32, 41A55, 41A63.

\section{Introduction} \label{sec:intro}

We are interested in studying multivariate numerical integration, more precisely, we consider numerical integration of a sub-class of 
square-integrable functions $f \in L^2([0,1]^d)$. For the approximation of the $d$-dimensional integrals we will use quasi-Monte Carlo (QMC) rules, which are equal-weight quadrature/cubature rules, that is, 
\begin{equation*}
	I_d(f)
	:= 
	\int_{[0,1]^d} f(\bsx) \rd \bsx 
	\approx 
	Q_{N,d}(f)
	:=
	\frac1N 
	\sum_{n=0}^{N-1} f(\bsx_n)
	,
\end{equation*}
where the quadrature/cubature points $\bsx_0, \ldots, \bsx_{N-1} \in [0,1]^d$ are chosen in a deterministic way. This is in contrast to Monte Carlo rules, which are of the same form as QMC methods but are based on randomly chosen integration nodes. With QMC rules, we try to make a deliberate and sophisticated choice of the points $\bsx_n$ with the aim of obtaining better error bounds than for Monte Carlo methods. Here, the key challenge is to systematically devise integration nodes that yield good approximation results simultaneously for a wide class of integrand functions that may depend on a large number of variables. Additionally, in order to obtain a low approximation error, we may need to construct millions of good integration nodes in very high dimensions, which presents a considerable computational challenge.  

There are two main families of point sets for QMC methods which are commonly considered in the literature. These are, on the one hand, lattice point sets, as introduced independently by Korobov (see \cite{Kor59}) and Hlawka (see \cite{H62}). For more recent introductions to lattice rules, we refer to \cite{Nied92, SJ94}. The other class of commonly used QMC integration node sets is that of (digital) $(t,m,d)$-nets and $(t,d)$-sequences, as introduced by Niederreiter, building up on ideas by Sobol' and Faure (see \cite{N87,Nied92}). In this article, we will consider a special instance of $(t,m,d)$-nets, namely so-called polynomial lattice point sets. Originally introduced in \cite{N92a}, polynomial lattices owe their name to their construction principle which resembles that of (ordinary) lattice point sets. While the construction of lattice point sets is based on integer arithmetic, polynomial lattice point sets are based on polynomial arithmetic over finite fields $\F_b$ with $b$ elements, where $b$ is prime. In particular, a polynomial lattice point set consists of $b^m$ points in $[0,1]^d$ that are constructed by means of a modulus $p\in \F_b [x]$ with $\deg(p)=m$, and a generating vector $\bsg\in (\F_b [x])^d$ (we refer to Section \ref{sec:poly_def} for the precise definition). The resulting QMC rule using the polynomial lattice point set as integration nodes is then called a polynomial lattice rule. 
In this article, we will use irreducible polynomials as the moduli of polynomial lattice rules. 

We remark that not every choice of a generating vector $\bsg$ yields a polynomial lattice rule with good approximation properties. In general, it is usually highly non-trivial to find good generating vectors of polynomial lattice rules, and there are (except for special cases) no explicit constructions of such good generating vectors known. Hence, one has to resort to computer search algorithms for finding generating vectors of polynomial lattice point sets with high quality. In this article, we study the worst-case setting, that is, we consider a particular normed function space and analyze the so-called worst-case error, which is the integration error of the considered QMC methods in the supremum over the unit ball of the space. The worst-case error will then serve as our quality measure for the constructed polynomial lattice rules.

It is known that polynomial lattice rules are well suited for the numerical integration of functions that can be represented by Walsh series (cf. \cite{DKPS05,DP05,DP10}). Throughout this article, we will therefore consider a ``weighted'' function space (in the sense of Sloan and Wo\'{z}niakowski (cf. \cite{SW98})) which consists of elements whose Walsh coefficients decay sufficiently fast. The prescribed decay will be characterized by a smoothness parameter $\alpha >1$ (in some publications this parameter is also referred to as the ``digital smoothness parameter'' in the context of Walsh series). Indeed, the parameter $\alpha$ is linked to the speed of decay of the Walsh coefficients of the functions in our space, but there is also a connection to the number of derivatives that exist for the elements of the space (we refer to \cite{DP10} and the references therein for details). Furthermore, the varying importance of different subsets of variables will be modeled by a collection of positive reals $\bsgamma = (\gamma_\setu)_{\setu \subseteq \{1,\ldots,d\}}$ which we refer to as weights. As pointed out in \cite{SW98} and numerous other papers, this concept is justified by practical high-dimensional problems in which different coordinates may indeed have a very different degree of influence on the values of an integral. The weights will be incorporated in the inner product and norm of the function space in a suitable way. Using this setting, it is plausible that a nominally very high-dimensional problem may have a rather low ``effective dimension'', i.e., only a certain, possibly small, part of the components has a significant influence on the integration problem and the error made by approximative algorithms.

The function space studied here is closely related to other function spaces considered in the literature, such as in \cite{DKPS05, DP05, DP10}, and results shown for the function space considered in the present paper immediately yield corresponding results for some of the Walsh spaces considered in these references. We refer to Section \ref{sec:walsh-polylat} below for further details.

The first efficient construction algorithm for good generating vectors of polynomial lattice point sets was introduced in \cite{DKPS05}, where a so-called component-by-component (CBC) approach was formulated and analyzed. Component-by-component constructions are greedy algorithms which construct the generating vector one component at a time. In the context of QMC integration, CBC algorithms were initially considered for (ordinary) lattice point sets, with the first examples in the literature going back to Korobov (cf. \cite{Kor63}), and later also Sloan and Reztsov (cf. \cite{SR02}). The CBC construction was then made widely applicable by the formulation of a fast construction algorithm. This fast CBC construction, which is due to Nuyens and Cools (see, e.g., \cite{N14, NC06a, NC06b}), makes the CBC algorithm computationally competitive and is currently the standard method to construct high-dimensional lattice point sets with good quality. It is well known (see, e.g., \cite{DKPS05} and again \cite{N14}) that CBC constructions also work for the efficient search for generating vectors of polynomial lattice point sets; and also in this case a fast algorithm is available.

In the present paper, an alternative CBC algorithm for the efficient construction of generating vectors of polynomial lattice point sets will be formulated. Opposed to the standard CBC construction that uses the worst-case error of the considered function space as 
the quality measure, our algorithm is based on an alternative quality criterion, which is in particular independent of the parameter $\alpha$. 
We stress that therefore no prior knowledge of the smoothness parameter $\alpha$ is required to construct the generating vector $\bsg$. 
The resulting generating vector will still achieve the almost optimal rate of convergence, for arbitrary values of the smoothness parameter $\alpha>1$, and this result can be stated independently of the dimension $d$, assuming that the weights satisfy certain summability conditions. The standard CBC algorithms construct the generating vector specifically with the smoothness $\alpha$ as an input parameter. We see the independence of $\alpha$ in the construction algorithm presented in this paper as a big advantage of our new method.

The rest of the article is structured as follows. In Section~\ref{sec:walsh-polylat}, we define the concept of polynomial lattice rules, introduce the weighted function space under consideration, and analyze the corresponding worst-case error expression. Furthermore, we introduce the quality criterion on which our algorithm is based and show the existence of good polynomial lattice rules for our setting. In Section~\ref{sec:CBC}, a novel variant of the CBC construction algorithm for good polynomial lattice rules is proposed. We prove that the resulting polynomial lattice rules achieve the almost optimal order of convergence, with error bounds independent of the dimension assuming certain conditions on the weights are satisfied. In Section~\ref{sec:fast_impl}, we show that the introduced construction method can be implemented in a fast manner which is competitive with the state-of-the-art component-by-component algorithm. Finally, the paper is concluded in Section~\ref{sec:num}, where we illustrate our main result by numerical experiments.

For the remainder of the article we fix some basic notation. We denote the set of positive integers by $\N$ and the set of non-negative integers by $\N_0$. To denote sets containing indices of components, we use fraktur font, e.g., $\setu \subset \N$, and additionally write $\{1 \mcol d\} := \{1,\ldots,d\}$ for short. For the projection of a vector $\bsx \in [0,1]^d$ or $\bsk \in \N_0^d$ onto the components with indices in a set $\setu \subseteq \{1 \mcol d\}$, we write $\bsx_\setu = (x_j)_{j \in \setu}$ or $\bsk_\setu = (k_j)_{j \in \setu}$, respectively. With a slight abuse of notation, we will frequently identify elements of the finite field $\F_b$ of prime cardinality $b$ with elements of the group of integers modulo $b$ denoted by $\Z_b$.

\section{Preliminaries} \label{sec:walsh-polylat}

In this section, we define polynomial lattice rules, introduce the function spaces under consideration, and then define the quality measure studied in this paper, for which we prove a first, non-constructive existence result.

\subsection{Polynomial lattice point sets} \label{sec:poly_def}

For a prime $b$, let $\F_{b}$ be the finite field with $b$ elements and let $\F_b ((x^{-1}))$ be the field of formal Laurent series over $\F_b$. 
Elements of $\F_b ((x^{-1}))$ have the form
\begin{equation*}
	L
	=
	\sum_{\ell=w}^\infty t_{\ell} \,x^{-\ell}
	,
\end{equation*}
where $w$ is an arbitrary integer and all $t_{\ell}\in\F_b$. Furthermore, we denote by $\F_b [x]$ the set of all polynomials over $\F_b$. For an integer $m \in \N$ let the map $v_m: \F_b ((x^{-1})) \to [0,1)$ be defined by
\begin{equation*}
	v_m \left(\sum_{\ell =w}^\infty t_\ell \, x^{-\ell}\right)
	=
	\sum_{\ell=\max (1,w)}^m t_{\ell} \, b^{-\ell}
\end{equation*}
and for a vector $\bsq=(q_1,\ldots,q_d) \in (\F_b ((x^{-1})))^d$ set $v_m(\bsq) = (v_m(q_1),\ldots,v_m(q_d)) \in [0,1)^d$.
Given an integer $n \in \N_0$ with $b$-adic expansion $n=n_0 + n_1 b + \cdots + n_a b^{a}$, we will frequently associate $n$ with the polynomial
\begin{equation*}
	n(x)
	:=
	\sum_{k=0}^a n_k \, x^k \in \F_b[x]
	.
\end{equation*} 

The definition of a polynomial lattice point set is as follows.

\begin{definition}[Polynomial lattice point set] \label{def:poly_lat}
	Let $b$ be prime and let $m,d \in \N$ be given. Furthermore, choose $p\in \F_b [x]$ with $\deg (p)=m$, and let 
	$g_1,\ldots,g_d \in \F_b [x]$. Then the point set $P(\bsg,p)$, defined as the collection of the $b^m$ points 
	\begin{equation*}
		\bsx_n
		:=
		\left(v_m \left( \frac{n(x)\, g_1(x)}{p(x)} \right),\ldots, v_m\left( \frac{n(x)\, g_d(x)}{p(x)} \right)\right) \in [0,1)^d
	\end{equation*}
	for $n \in \F_b[x]$ with $\deg(n)<m$, is called a \emph{polynomial lattice point set} and the vector of polynomials
	$\bsg=(g_1,\ldots,g_d) \in (\F_b[x])^d$ is called the generating vector.
\end{definition}

The point sets $P(\bsg,p)$ which are considered in this paper are often called polynomial lattices and a QMC rule using $P(\bsg,p)$ is referred 
to as a polynomial lattice rule. The polynomial $p$ is referred to as the modulus. Without loss of generality, we can restrict the choice of the components $g_j$ of the generating vector $\bsg$ to the sets $G_{b,m}$ or $G^\ast_{b,m}$, which are subsets of $\F_b [x]$, defined as 
\begin{align*}
	G_{b,m}
	:=
	\{ g\in\F_b [x] \mid \deg(g) < m \}
	\quad \text{and} \quad
	G^\ast_{b,m}
	:=
	\{ g\in\F_b [x] \setminus \{0\} \mid \deg(g) < m \}
	.
\end{align*}

For two arbitrary vectors $\bsu(x)=(u_1(x),...,u_d(x)), \bsv(x)=(v_1(x),...,v_d(x)) \in (\F_b[x])^d$ we define the inner product 
\begin{equation*}
	\bsu(x) \cdot \bsv(x)
	:=
	\sum_{j=1}^{d} u_j(x )v_j(x) \in \F_b[x]
	,
\end{equation*}
and we write $ v(x) \equiv 0 \tpmod {p(x)}$ if $p(x)$ divides $v(x)$ in $\F_b[x]$. For integer $k \in \N_0$ with $b$-adic expansion 
$k=\kappa_0 + \kappa_1 b + \cdots + \kappa_{a-1} b^{a-1}$, we define the truncation map $\tr_m: \N_0 \to G_{b,m}$ via
\begin{equation*}
	\tr_m(k)
	:=
	\kappa_0 + \kappa_1 x + \cdots + \kappa_{m-1} x^{m-1} 
\end{equation*}
with $d$-variate generalization $\tr_m(\bsk)$ applied component-wise and additionally introduce the vector 
$\widetilde{\tr}_m(\vec{k}) = (\kappa_0,\kappa_1,\ldots,\kappa_{m-1}) \in \F_b^m$. If $a<m$, then the $\kappa_i$ with $i>a$ are 
intepreted as being equal to zero.

Polynomial lattice point sets are special instances of so-called digital nets, which are constructed using linear algebra over finite fields or rings (see \cite{Nied92} for an introduction). Every digital net has a dual net, which plays a crucial role in the expression of the integration error of a QMC rule using the net. For a polynomial lattice point set with generating vector $\bsg$ and modulus $p$ with $\deg(p)=m$, its dual net (or, 
in this case, dual polynomial lattice) $\calD(\bsg,p)$ equals 
\begin{equation} \label{eq:dual}
	\calD(\bsg,p)
	=
	\{\bsk \in \N_0^d \mid \tr_m(\bsk) \cdot \bsg \equiv 0 \tpmod p \} 
	.
\end{equation} 

Furthermore, for a set $\setu \subseteq \{1 \mcol d\}$ we introduce the notation
\begin{equation*}
	\calD_\setu
	=
	\calD_\setu(\bsg, p)
	=
	\calD_\setu(\bsg_\setu)
	:=
	\{ \bsk_\setu  \in \N^{|\setu|} \mid \tr_m(\bsk_\setu) \cdot \bsg_\setu \equiv 0 \tpmod{p} \} 
	.
\end{equation*}

\subsection{Walsh series representation} \label{subsec:walsh}

We consider numerical integration for square-integrable functions $f \in L^2([0,1]^d)$ which can be represented in terms of their Walsh series. This particular series representation of a function is based on the so-called Walsh functions which are given in the following definition.

\begin{definition} \label{def:walsh_functions}
	Let $b \ge 2$ be an integer. For a non-negative integer $k$ with base $b$ expansion $k=\kappa_0 + \kappa_1 b + \cdots + \kappa_{a-1} b^{a-1}$, we define the $k$-th Walsh function $_b\wal_k :[0,1) \to \C$ in base $b$ by
	\begin{equation*}
		_b\wal_k (x)
		:=
		\rme^{2\pi\icomp (\kappa_0 \xi_1 + \kappa_1 \xi_2 + \cdots + \kappa_{a-1} \xi_{a})/b}
		,
	\end{equation*}
	where $x\in [0,1)$ is represented as $x=\xi_1 b^{-1} + \xi_2 b^{-2} + \cdots$ with coefficients $\xi_i \in \{0,1,\ldots,b-1\}$ 
	(unique in the sense that infinitely many of the $\xi_i$ must be different from $b-1$).

	For $d \in \N$, an integer vector $\bsk=(k_1,\ldots,k_d) \in \N_0^d$ and $\bsx=(x_1,\ldots,x_d)\in [0,1)^d$, we define 
	the $\bsk$-th ($d$-variate) Walsh function $_b\wal_{\bsk} :[0,1)^d \to \C$ in base $b$ by
	\begin{equation*}
		_b\wal_{\bsk} (\bsx)
		:=
		\prod_{j=1}^d \ _b\wal_{k_j} (x_j)
		.
	\end{equation*}
\end{definition}

In the following, we will consider the base $b \ge 2$ as fixed, and then simply write $\wal_k$ or $\wal_{\bsk}$ instead of $_b\wal_k$ or $_b\wal_{\bsk}$, respectively. Note that for any  function $f \in L^2([0,1]^d)$ the  Walsh series of $f$ is given by
\begin{equation} \label{eq:Walsh_series}
	f(\bsx)
	=
	\sum_{\bsk \in \N_0^d} \hat{f}(\bsk) \, \wal_\bsk(\bsx)
	,
\end{equation}
where the $\bsk$-th Walsh coefficient of $f$ is defined as
\begin{equation*}
	\hat{f}(\bsk)
	=
	\int_{[0,1]^d} f(\bsx) \, \overline{\wal_\bsk(\bsx)} \rd \bsx
	.
\end{equation*}

For the numerical integration of functions given in terms of their Walsh series as in \eqref{eq:Walsh_series}, it is common to consider quasi-Monte Carlo rules which are based on digital nets or their infinite counterparts, digital sequences. In this article, we will employ the special digital net type of polynomial lattice rules, as introduced in the previous section, in order to approximate integrals by QMC rules. It is well known, see, e.g., \cite[Theorem 6.4]{DKS13}, that approximating the integral $I_d(f)$ using a QMC rule based on the polynomial lattice $P(\bsg,p) = \{\bsx_0,\ldots,\bsx_{N-1}\}$ with generating vector $\bsg \in G_{b,m}^d$ and modulus $p \in \F_b[x]$ with $\deg(p)=m$, i.e.,
\begin{equation*}
	Q_{b^m,d}(f,\bsg)
	=
	Q_{b^m,d}(f;P(\bsg,p))
	:=
	\frac{1}{b^m} \sum_{n=0}^{b^m-1} f(\bsx_n)
	\approx
	\int_{[0,1]^d} f(\bsx) \rd \bsx 
	=:
	I_d(f)
	,
\end{equation*}
leads to an integration error of the form
\begin{equation} \label{eq:int_error}
	Q_{b^m,d}(f,\bsg) - I_d(f)
	=
	\sum_{\bszero \ne \bsk \in \calD(\bsg,p)} \hat{f}(\bsk)
\end{equation}
with $\calD(\bsg,p)$ as in \eqref{eq:dual}, provided that $f$ can be represented by a Walsh series as in \eqref{eq:Walsh_series}. In particular, we will require in this paper that $f$ lies in a function space referred to as a Walsh space, which we introduce next.

\subsection{The weighted Walsh space  \texorpdfstring{$W_{d,\bsgamma}^{\alpha}$}{TEXT}} \label{subsec:weight_wal}

In this section, we will introduce the function space under consideration in this paper, which consists of functions $f$ that admit a representation as in \eqref{eq:Walsh_series} and for which the Walsh coefficients $\hat{f}(\bsk)$ decay at a prescribed rate. For this purpose, we will first define a decay function as follows.

For prime base $b$ and a given real smoothness parameter $\alpha > 1$, we define the decay function $r_\alpha: \N_0 \to \R$ by
\begin{equation*}
	r_\alpha (k) = r_{\alpha}(b,k)
	:=
	\left\{\begin{array}{cc}
	1, & {\text{if }} k=0 , \\ 
	b^{\alpha \psi_b (k)}, & {\text{if }} k \ne 0 
	,
	\end{array}\right.
\end{equation*}
where we set $\psi_b(k):=\floor{\log_b(k)}$ for $k\in \N$. Additionally, we introduce the auxiliary quantity
\begin{equation*}
	\mu_b (\alpha)
	:=
	\sum_{k=1}^\infty r^{-1}_{\alpha} (k)= \sum_{a=0}^\infty \frac{1}{b^{a\alpha}} 
	\sum_{k=b^a}^{b^{a+1}-1} 1=\sum_{a=0}^\infty \frac{(b-1)b^a}{b^{a\alpha}}
	=
	\frac{b^{\alpha}(b-1)}{b^\alpha - b}
	.
\end{equation*} 
For an integer vector $\bsk=(k_1,\ldots,k_d) \in \N_0^d$ and positive weights $\bsgamma = (\gamma_\setu)_{\setu \subseteq \{1:d\}}$, 
we define the (weighted) multivariate generalization of $r_\alpha$ as
\begin{equation*}
	r_\alpha (\bsk)
	:=
	\prod_{j=1}^d r_\alpha (k_j) 
	\quad \text{and} \quad
	r_{\alpha,\bsgamma}(\bsk)
	:=
	\gamma_{\supp(\bsk)}^{-1} \, r_\alpha (\bsk) 
	=
	\gamma_{\supp(\bsk)}^{-1} \prod_{j \in \supp(\bsk)} b^{\alpha \psi_b(k_j)}
\end{equation*}
with $\supp(\bsk) := \{ j \in \{1 \mcol d\} \mid k_j \ne 0 \}$. As mentioned in the introduction, the weights $\gamma_{\setu}$, which are incorporated in the decay function (and thus in the definition of the function space), model the varying importance of subsets of variables $\bsx_{\setu} = (x_j)_{j \in \setu}$ on the integration problem. The weights will play a crucial role in our effort to overcome an exponential dependence on the integration error of the QMC rules on the dimension $d$. This exponential dependence is sometimes also referred to as the curse of dimensionality.

The weighted Walsh space $W_{d,\bsgamma}^{\alpha}$ is then defined as the space of all functions 
$f \in L^2([0,1]^d)$ for which the norm $\norm{f}_{W_{d,\bsgamma}^{\alpha}}$, given by
\begin{equation*}
	\norm{f}_{W_{d,\bsgamma}^{\alpha}}
	:=
	\sup_{\bsk \in \N_0^d} |\hat{f}(\bsk)| \, r_{\alpha,\bsgamma}(\bsk)
\end{equation*}
is finite, that is,
\begin{equation*}
	W_{d,\bsgamma}^{\alpha}
	:=
	\{f \in L^2([0,1]^d) \mid \norm{f}_{W_{d,\bsgamma}^{\alpha}} < \infty \}
	,
\end{equation*}
where $\alpha>1$, and where the weights $\bsgamma = (\gamma_\setu)_{\setu \subseteq \{1:d\}}$ are strictly positive. As pointed out in the introduction, the function space studied here is closely related to other Walsh spaces considered in the literature, such as in \cite{DKPS05, DP05, DP10}. To be more precise, the spaces considered in these references have the norm given by $\norm{f}^2 =\sum_{\bsk \in \N_0^d} |\hat{f}(\bsk)|^2 \, r_{\alpha,\bsgamma}(\bsk)$. It can be shown easily that the worst-case error in these spaces is exactly the square root of the worst-case error in $W_{d,\bsgamma}^{\alpha}$. Therefore the results shown here immediately imply results for the spaces considered in references like \cite{DKPS05, DP05, DP10}.

In this article, we will use the so-called worst-case error to assess the quality of integration rules. 
For the function space $W_{d,\bsgamma}^{\alpha}$, the worst-case error of a QMC rule $Q_{N,d}(\cdot; P)$ 
with underlying point set $P \subseteq [0,1]^d$ consisting of $N$ points,
is defined as
\begin{equation*}
	e_{N,d,\alpha,\bsgamma}(P)
	:=
	\sup_{\substack{f \in W_{d,\bsgamma}^{\alpha} \\ \norm{f}_{W_{d,\bsgamma}^{\alpha} \le 1}}} \abs{Q_{N,d}(f; P) - I_d(f)}
	.
\end{equation*}
As was shown in \cite{EKOS2020}, the worst-case error in $W_{d,\bsgamma}^{\alpha}$ of a polynomial lattice rule  
takes an explicit form.    

\begin{theorem}[Worst-case error expression] \label{thm:wce_dig_net}
	Let $m,d \in \N$, a real $\alpha > 1$, prime $b$, and positive weights $\bsgamma = (\gamma_{\setu})_{\setu \subseteq \{1:d\}}$ be given. 
	Then the worst-case error $e_{b^m,d,\alpha,\bsgamma}(P(\bsg,p))$ of a QMC rule  based on the polynomial lattice $P(\bsg,p)$
	with generating vector $\bsg \in (\F_b[x])^d$ and modulus $p \in \F_b[x]$ with $\deg(p)=m$ in the space $W_{d,\bsgamma}^{\alpha}$ satisfies
	\begin{equation*}
		e_{b^m,d,\alpha,\bsgamma}(P(\bsg,p))
		=
		\sum_{\bszero \ne \bsk \in \calD(\bsg,p)} (r_{\alpha,\bsgamma}(\bsk))^{-1}
		,
	\end{equation*}
    where $\calD(\bsg,p)$ is defined as in \eqref{eq:dual}.
\end{theorem}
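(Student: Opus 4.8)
The plan is to compute the worst-case error directly from its definition by exploiting the linearity of the integration error formula \eqref{eq:int_error} together with the structure of the norm on $W_{d,\bsgamma}^{\alpha}$. The key observation is that the worst-case error is a supremum of $|Q_{b^m,d}(f,\bsg) - I_d(f)|$ over the unit ball, and by \eqref{eq:int_error} this integration error equals $\sum_{\bszero \ne \bsk \in \calD(\bsg,p)} \hat{f}(\bsk)$, which depends on $f$ only through its Walsh coefficients indexed by the dual net. The strategy is therefore to bound this sum from above using the constraint $\norm{f}_{W_{d,\bsgamma}^{\alpha}} \le 1$, and then to exhibit a specific extremal function attaining the bound.

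First I would establish the upper bound. The norm constraint $\sup_{\bsk} |\hat{f}(\bsk)| \, r_{\alpha,\bsgamma}(\bsk) \le 1$ means that $|\hat{f}(\bsk)| \le (r_{\alpha,\bsgamma}(\bsk))^{-1}$ for every $\bsk \in \N_0^d$. Applying the triangle inequality to \eqref{eq:int_error} then gives
\begin{equation*}
	\abs{Q_{b^m,d}(f,\bsg) - I_d(f)}
	=
	\abs{\sum_{\bszero \ne \bsk \in \calD(\bsg,p)} \hat{f}(\bsk)}
	\le
	\sum_{\bszero \ne \bsk \in \calD(\bsg,p)} \abs{\hat{f}(\bsk)}
	\le
	\sum_{\bszero \ne \bsk \in \calD(\bsg,p)} (r_{\alpha,\bsgamma}(\bsk))^{-1}
	,
\end{equation*}
which yields $e_{b^m,d,\alpha,\bsgamma}(P(\bsg,p)) \le \sum_{\bszero \ne \bsk \in \calD(\bsg,p)} (r_{\alpha,\bsgamma}(\bsk))^{-1}$.

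Next I would show the reverse inequality by constructing a near-extremal (or exactly extremal) function. The natural candidate is the function $f^\ast$ whose Walsh coefficients are $\hat{f}^\ast(\bsk) = (r_{\alpha,\bsgamma}(\bsk))^{-1}$ for $\bsk \in \calD(\bsg,p)$ and $\hat{f}^\ast(\bsk) = 0$ otherwise; this choice makes every term in the error sum real, nonnegative, and as large as the norm permits, so each inequality above becomes an equality. The main technical point here is to verify that $f^\ast$ genuinely lies in $W_{d,\bsgamma}^{\alpha}$ with $\norm{f^\ast}_{W_{d,\bsgamma}^{\alpha}} \le 1$ (which is immediate from the definition of the coefficients, giving $|\hat{f}^\ast(\bsk)|\,r_{\alpha,\bsgamma}(\bsk) \in \{0,1\}$) and, more delicately, that $f^\ast \in L^2([0,1]^d)$, i.e. that $\sum_{\bsk} |\hat{f}^\ast(\bsk)|^2 = \sum_{\bszero \ne \bsk \in \calD(\bsg,p)} (r_{\alpha,\bsgamma}(\bsk))^{-2} < \infty$.

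The hard part will be ensuring this square-summability, since a priori the error sum $\sum_{\bszero \ne \bsk \in \calD} (r_{\alpha,\bsgamma}(\bsk))^{-1}$ could itself fail to converge if the dual net is too dense. I would address this by first arguing that the sum over the full dual net is dominated by the sum over all of $\N_0^d$, and that $\sum_{\bsk \in \N_0^d}(r_{\alpha,\bsgamma}(\bsk))^{-1}$ factorizes over coordinates and subsets into products and sums involving the finite quantity $\mu_b(\alpha) = b^\alpha(b-1)/(b^\alpha - b)$ (using $\alpha > 1$, which guarantees $\mu_b(\alpha) < \infty$). This establishes that the error sum, and a fortiori the square sum, converges, so $f^\ast$ is a legitimate element of the space. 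Combining the two inequalities gives the claimed equality. An alternative, cleaner route that avoids explicitly exhibiting $f^\ast$ would be to cite the general reproducing-kernel Hilbert-space argument underlying \cite{EKOS2020}, where the worst-case error is identified via the kernel; but the direct Walsh-coefficient argument above is self-contained and I would favour it.
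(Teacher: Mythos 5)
Your proposal is correct, and it is worth noting that the paper itself does not actually prove \RefThm{thm:wce_dig_net}: it cites \cite{EKOS2020} for the result and only displays, immediately after the theorem statement, the H\"older-type computation that constitutes the upper-bound half of your argument (your triangle-inequality step is exactly that computation, since $\abs{\hat{f}(\bsk)}\le (r_{\alpha,\bsgamma}(\bsk))^{-1}$ on the unit ball). What you add is the lower-bound half via the extremal function $f^\ast$ with $\hat{f}^\ast(\bsk)=(r_{\alpha,\bsgamma}(\bsk))^{-1}\bsone_{\calD(\bsg,p)}(\bsk)$, which is the standard and correct way to close the argument. One small refinement: the condition you should really verify for $f^\ast$ is not merely square-summability but \emph{absolute} summability of its Walsh coefficients, i.e.\ $\sum_{\bsk\in\N_0^d}(r_{\alpha,\bsgamma}(\bsk))^{-1}=\sum_{\setu\subseteq\{1:d\}}\gamma_\setu\,\mu_b(\alpha)^{\abs{\setu}}<\infty$ for $\alpha>1$, which you in fact establish en route. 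Since $\abs{\wal_{\bsk}}\equiv 1$, this makes the series for $f^\ast$ converge absolutely and uniformly, so $f^\ast$ is defined pointwise, the interchange of summation with the QMC average is justified, and the error formula\RefEq{eq:int_error} (whose hypothesis is a pointwise Walsh representation) legitimately applies to $f^\ast$; square-summability alone would only give $L^2$-convergence and would leave the pointwise evaluation at the nodes unjustified. It also immediately gives $L^2$-membership, since $(r_{\alpha,\bsgamma}(\bsk))^{-1}\le\max_{\setu}\gamma_\setu$ implies the square sum is dominated by a constant times the absolute sum. With that observation in place, every inequality in your chain is an equality for $f^\ast$ (all terms are real and nonnegative, and the norm of $f^\ast$ equals $1$ because $\bszero\in\calD(\bsg,p)$), so the two bounds combine to the claimed identity.
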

Henceforth, we will denote the worst-case error $e_{b^m,d,\alpha,\bsgamma}(P(\bsg_,p))$ of a QMC rule that is based on the polynomial lattice
$P(\bsg,p)$ in the space $W_{d,\bsgamma}^\alpha$ simply by $e_{b^m,d,\alpha,\bsgamma}(\bsg)$. 

The integration error for a single $f\in W_{d,\bsgamma}^{\alpha}$ can then be directly related to the worst-case error of a polynomial lattice rule. Starting from the expression in \eqref{eq:int_error}, an application of H\"older's inequality with parameters 1 and $\infty$ yields the estimate
\begin{align*}
	\abs{Q_{b^m,d}(f;P(\bsg,p)) - I_d(f)}
	&=
	\abs{\sum_{\bszero \ne \bsk \in \calD(\bsg,p)} \hat{f}(\bsk)}
	=
	\abs{\sum_{\bszero \ne \bsk \in \N_0^d} \hat{f}(\bsk) \, r_{\alpha,\bsgamma}(\bsk) 
	\, (r_{\alpha,\bsgamma}(\bsk))^{-1} \, \bsone_{\calD(\bsg,p)}(\bsk)} \nonumber \\
	&\le
	\left( \sup_{\bsk \in \N_0^d} |\hat{f}(\bsk)| \, r_{\alpha,\bsgamma}(\bsk) \right) 
	\left( \sum_{\bszero \ne \bsk \in \calD(\bsg,p)} (r_{\alpha,\bsgamma}(\bsk))^{-1} \right)
	\\
	&=
	\norm{f}_{W_{d,\bsgamma}^{\alpha}} e_{b^m,d,\alpha,\bsgamma}(\bsg)
\end{align*}
with $\bsone_{\calD(\bsg,p)}$ denoting the indicator function of the dual net $\calD(\bsg,p)$. Therefore, it becomes evident that we need to construct polynomial lattice rules such that the associated worst-case error $e_{b^m,d,\alpha,\bsgamma}(\bsg)$ is as small as possible.

\subsection{The existence of good polynomial lattice rules}

In this section, we study the existence of polynomial lattice rules with a small worst-case error which satisfies a certain asymptotic decay with respect to the number of cubature nodes. Here, we introduce an alternative quality measure which, opposed to the worst-case error expression obtained in Theorem~\ref{thm:wce_dig_net}, is independent of the parameter $\alpha$. To this end, we extend the definition of the function $r_{\alpha,\bsgamma}$ to the case where we allow $\alpha=1$, i.e.,
\begin{equation*}
	r_{1,\bsgamma}(\bsk)
	:=
	\gamma_{\supp(\bsk)}^{-1} \prod_{j \in \supp(\bsk)} b^{\floor{\log_b(k_j)} }\quad \text{for} \quad \bsk\in\N_0^d
	.
\end{equation*}

\noindent
For given $\alpha \ge 1$, given positive weights $\bsgamma = (\gamma_{\setu})_{\setu \subseteq \{1:d\}}$, modulus $p \in \F_b[x]$ with $\deg(p)=m$, and $\bsg \in (\F_b[x])^d$, we introduce the quantities
\begin{equation} \label{eq:qual_meas}
	T_{\bsgamma}(\bsg,p)
	:= 
	\sum_{\bszero \ne \bsk \in A_p(\bsg)} (r_{1,\bsgamma}(\bsk))^{-1} 
	,\ \mbox{and} \qquad
	T_{\alpha,\bsgamma}(\bsg,p)
	:= 
	\sum_{\bszero \ne \bsk \in A_p(\bsg)} (r_{\alpha,\bsgamma}(\bsk))^{-1}
\end{equation}      
with index set $A_p(\bsg)$ given by
\begin{equation*}
	A_p(\bsg)
	:=
	\{ \bsk \in \{0,1,\ldots,b^m-1\}^d \mid \bsk \in \calD(\bsg,p) \}.
\end{equation*}
Furthermore, for a set $\emptyset\neq\setu \subseteq \{1 \mcol d\}$, we introduce the sets
\begin{align*}
	A_\setu 
	&
	=
	A_{p,\setu}(\bsg)
	=
	A_{p,\setu}(\bsg_\setu)
	:=
	\{ \bsk_\setu \in \{0,1,\ldots,b^m-1\}^{\abs{\setu}} \mid \bsk_\setu \in \calD_{\setu}(\bsg,p) \}
	, \\
	A^\ast_\setu 
	&
	=
	A^\ast_{p,\setu}(\bsg)	
	=
	A^\ast_{p,\setu}(\bsg_\setu)
	:=
	\{ \bsk_\setu \in \{1,\ldots,b^m-1\}^{\abs{\setu}} \mid \bsk_\setu \in \calD_{\setu}(\bsg,p) \},
\end{align*}
and for the modulus $p \in \F_b[x]$ we define the indicator function $\delta_p: \F_b[x] \to \{0,1\}$ by
\begin{equation*}
	\delta_p(q)
	:=
	\begin{cases}
		1 & \text{if } q \equiv 0 \tpmod{p}, \\
		0 & \text{if } q \not\equiv 0 \tpmod{p}.
	\end{cases}
\end{equation*}

We can then prove the following existence result for good generating vectors $\bsg$ with respect to the quality measure $T_{\bsgamma}(\bsg,p)$.

\begin{theorem}[Existence result w.r.t. $T_{\bsgamma}$] \label{thm:exist_T}
	Let $\bsgamma = (\gamma_{\setu})_{\setu \subseteq \{1:d\}}$ be positive weights. For every irreducible $p \in \F_b[x]$ of degree $m \in \N$ 
	there exists a generating vector $\bsg \in G_{b,m}^d$ such that
	\begin{align*}
		T_{\bsgamma}(\bsg,p) 
		= 
		\sum_{\bszero \ne \bsk \in \{0,1,\ldots,b^m-1\}^d} \frac{\delta_p(\tr_m(\bsk) \cdot \bsg)}{r_{1,\bsgamma}(\bsk)} 
		\le
		\frac{1}{b^m} \sum_{\emptyset\neq\setu\subseteq \{1:d\}} \gamma_\setu \left( m \, (b-1) \right)^{|\setu|} 
		.
	\end{align*}
\end{theorem}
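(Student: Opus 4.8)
The plan is a classical averaging (first-moment) argument: I would compute the arithmetic mean of $T_{\bsgamma}(\bsg,p)$ as $\bsg$ ranges over all of $G_{b,m}^d$ and show that this mean equals the right-hand side; since the minimum over $\bsg$ cannot exceed the mean, a generating vector meeting the claimed bound must exist. The first step is to reorganize the defining sum for $T_{\bsgamma}(\bsg,p)$ by grouping the indices $\bszero \ne \bsk \in \{0,\ldots,b^m-1\}^d$ according to their support $\setu = \supp(\bsk)$. Writing $(r_{1,\bsgamma}(\bsk))^{-1} = \gamma_\setu \prod_{j \in \setu} b^{-\floor{\log_b k_j}}$ and observing that $\tr_m(\bsk)\cdot\bsg \equiv \tr_m(\bsk_\setu)\cdot\bsg_\setu \pmod p$ whenever $\supp(\bsk)=\setu$, this yields
\begin{equation*}
	T_{\bsgamma}(\bsg,p)
	=
	\sum_{\emptyset\ne\setu\subseteq\{1:d\}} \gamma_\setu
	\sum_{\bsk_\setu\in\{1,\ldots,b^m-1\}^{|\setu|}}
	\frac{\delta_p(\tr_m(\bsk_\setu)\cdot\bsg_\setu)}{\prod_{j\in\setu} b^{\floor{\log_b k_j}}}
	.
\end{equation*}

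Next I would take the mean over $\bsg \in G_{b,m}^d$. Since the $\setu$-summand depends on $\bsg$ only through $\bsg_\setu$, it suffices to average $\delta_p(\tr_m(\bsk_\setu)\cdot\bsg_\setu)$ over $\bsg_\setu \in G_{b,m}^{|\setu|}$ for each fixed $\bsk_\setu$. This is the crux of the argument and the step I expect to be the main obstacle, since it is precisely where irreducibility of $p$ enters. Because $p$ is irreducible of degree $m$, the quotient $\F_b[x]/(p)$ is a field of cardinality $b^m$ and $G_{b,m}$ is a full set of residue representatives. For $\bsk_\setu$ with every $k_j \in \{1,\ldots,b^m-1\}$, each $\tr_m(k_j)$ is a nonzero polynomial of degree less than $m=\deg p$, hence a unit modulo $p$; consequently $\sum_{j\in\setu}\tr_m(k_j)\,g_j\equiv 0 \pmod p$ is a single nondegenerate $\F_b[x]/(p)$-linear equation, whose solution set has exactly $b^{m(|\setu|-1)}$ elements. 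Therefore the average of $\delta_p$ equals $b^{m(|\setu|-1)}/b^{m|\setu|} = b^{-m}$, independently of $\bsk_\setu$ (the case $|\setu|=1$ forcing $g_j=0$ being consistent with this count).

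With this, the mean of $T_{\bsgamma}(\bsg,p)$ becomes $b^{-m}\sum_{\setu}\gamma_\setu \sum_{\bsk_\setu} \prod_{j\in\setu} b^{-\floor{\log_b k_j}}$, and the inner sum factorizes over coordinates as $\left(\sum_{k=1}^{b^m-1} b^{-\floor{\log_b k}}\right)^{|\setu|}$. A routine grouping of $k$ by the value $a=\floor{\log_b k}\in\{0,\ldots,m-1\}$, noting that exactly $(b-1)b^a$ integers $k$ share a given $a$ and each contributes $b^{-a}$, evaluates the single-variable sum to $m(b-1)$. Substituting gives $\mathrm{mean}\,T_{\bsgamma} = b^{-m}\sum_{\emptyset\ne\setu\subseteq\{1:d\}}\gamma_\setu\,(m(b-1))^{|\setu|}$, which is exactly the asserted upper bound; the existence of a $\bsg \in G_{b,m}^d$ attaining at most this value then follows immediately from the averaging principle.
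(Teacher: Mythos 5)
Your proposal is correct and follows essentially the same route as the paper: an averaging argument over $\bsg \in G_{b,m}^d$, using irreducibility of $p$ to make $\F_b[x]/(p)$ a field so that the linear congruence $\tr_m(\bsk_\setu)\cdot\bsg_\setu \equiv 0 \pmod{p}$ has exactly $(b^m)^{|\setu|-1}$ solutions, followed by the factorization of the weighted sum over $\bsk$ and the evaluation $\sum_{k=1}^{b^m-1} b^{-\floor{\log_b k}} = m(b-1)$. The only cosmetic difference is that you group by support $\setu$ before averaging while the paper averages first and isolates a single coordinate $g_d$ with a unique solution; the two counts are identical.
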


\begin{proof}
	Let $\bszero \ne \bsk \in \N_0^d$ be such that there exists $j \in \{1,\ldots,d\}$ with $\tr_m(k_j) \ne 0$, and assume without loss of generality that $j = d$. Then we find that
	\begin{equation} \label{eq:ind_func}
		\sum_{\bsg \in G_{b,m}^d} \delta_p(\tr_m(\bsk) \cdot \bsg)
		=
		\sum_{\bsg_{\{1:d-1\}} \in G_{b,m}^{d-1}} \sum_{g_d \in G_{b,m}} \delta_p(\tr_m(\bsk_{\{1:d-1\}}) \cdot \bsg_{\{1:d-1\}} + \tr_m(k_d) \,g_d)
		=
		(b^m)^{d-1}
		,
	\end{equation}
	which follows due to the fact that
	\begin{equation*}
		\sum_{g_d \in G_{b,m}} \delta_p(\tr_m(\bsk_{\{1:d-1\}}) \cdot \bsg_{\{1:d-1\}} + \tr_m(k_d) \, g_d) = 1
		,
	\end{equation*}
	since for $p$ irreducible $\F_b[x] / (p)$ is a finite field and thus 
	\begin{equation*}
		\tr_m(k_1) g_1 + \cdots + \tr_m(k_{d-1}) g_{d-1} + \tr_m(k_d) g_d 
		\equiv 
		0 \pmod{p}
	\end{equation*}
	has $g_d \equiv - \tr_m(k_d)^{-1} (\tr_m(k_1) g_1 + \cdots + \tr_m(k_{d-1}) g_{d-1}) \pmod{p}$ as the unique solution in $G_{b,m}$. 
	
	By the standard averaging argument, which implies that there is always at least one element in a set of real numbers which is as good as average, there exists a $\bar{\bsg} \in G_{b,m}^d$ which satisfies
	\begin{align} \label{eq:average}
		T_{\bsgamma}(\bar{\bsg},p)
		=
		\min_{\bsg \in G_{b,m}^d} T_{\bsgamma}(\bsg,p)
		&\le
		\frac{1}{(b^m)^d} \sum_{\bsg \in G_{b,m}^d} T_{\bsgamma}(\bsg,p) \nonumber \\
		&=
		\frac{1}{(b^m)^d} \sum_{\bszero \ne \bsk \in \{0,1,\ldots,b^m-1\}^d} \frac{1}{r_{1,\bsgamma}(\bsk)} \sum_{\bsg \in G_{b,m}^d} \delta_p(\tr_m(\bsk) \cdot \bsg) \nonumber \\
		&=
		\frac{1}{b^m} \sum_{\bszero \ne \bsk \in \{0,1,\ldots,b^m-1\}^d} \frac{1}{r_{1,\bsgamma}(\bsk)}
		,
	\end{align}
	where we used again the identity in \eqref{eq:ind_func}. Note that we can write
	\begin{align*}
		\sum_{\bszero \ne \bsk \in \{0,1,\ldots,b^m-1\}^d} \! \frac{1}{r_{1,\bsgamma}(\bsk)}
		&=
		\!\!\sum_{\emptyset\neq\setu\subseteq \{1:d\}} \sum_{\bsk_\setu \in \{1,\ldots,b^m-1\}^{\abs{\setu}}} \! \frac{1}{r_{1,\bsgamma}(\bsk_\setu)}
		=
		\!\!\sum_{\emptyset\neq\setu\subseteq \{1:d\}} \!\! \gamma_\setu \!\!\! \sum_{\bsk_\setu \in \{1,\ldots,b^m-1\}^{\abs{\setu}}} \prod_{j\in\setu} b^{-\psi_b(k_j)} \\	
		&=
		\!\!\sum_{\emptyset\neq\setu\subseteq \{1:d\}} \!\! \gamma_\setu \prod_{j\in\setu} \sum_{k_j \in \{1,\ldots,b^m-1\}} \!\!\!
		b^{-\psi_b(k_j)}
		= 
		\!\sum_{\emptyset\neq\setu\subseteq \{1:d\}} \gamma_\setu \left( \sum_{k=1}^{b^m-1} b^{-\psi_b(k)} \right)^{|\setu|} . 
	\end{align*}
	The inner sum of the latter term equals
	\begin{equation*}
		\sum_{k=1}^{b^m-1} b^{-\psi_b(k)}
		=
		\sum_{t=0}^{m-1} \sum\limits_{k=b^{t}}^{b^{t+1}-1} b^{-\lfloor \log_{b}k \rfloor} 
		=
		\sum_{t=0}^{m-1} (b-1) b^t \, b^{-t}
		=
		\sum_{t=0}^{m-1} (b-1)
		=
		m \, (b-1),
	\end{equation*} 
	such that we obtain
	\begin{equation*}
		\sum_{\bszero \ne \bsk \in \{0,1,\ldots,b^m-1\}^d} \frac{1}{r_{1,\bsgamma}(\bsk)}
		=
		\sum_{\emptyset\neq\setu\subseteq \{1:d\}} \gamma_\setu \left( m \, (b-1) \right)^{|\setu|}
		.
	\end{equation*}
	Combining this with \eqref{eq:average} yields the existence of a good generating vector in $G_{b,m}^d$ as claimed.
\end{proof}

The following proposition, which was proved in \cite{EKOS2020}, gives a bound on the difference between the worst-case error $e_{b^m,d,\alpha,\bsgamma}(\bsg)$ and the truncated quality measure $T_{\alpha,\bsgamma}(\bsg,p)$ of a polynomial lattice rule with 
generator $\bsg$ and modulus $p$.
 
\begin{proposition} \label{prop:trunc_error}
	Let $p \in \F_b[x]$ with $\deg(p)=m$, let $\bsgamma = (\gamma_{\setu})_{\setu \subseteq \{1:d\}}$ be positive weights, and let 
	$\bsg = (g_1,\ldots,g_d) \in G_{b,m}^d$ such that $\gcd(g_j,p)=1$ for all $j\in\{1,\ldots,d\}$. Then, for any $\alpha>1$ and for $N=b^m$, we have
	\begin{align*}
		e_{b^m,d,\alpha,\bsgamma}(\bsg) - T_{\alpha,\bsgamma}(\bsg,p)
		\le
		\frac{1}{N^{\alpha}} \sum_{\emptyset\neq \setu \subseteq \{1:d\}} \gamma_{\setu} (2 \mu_b(\alpha))^{\abs{\setu}}
		.
	\end{align*}
\end{proposition}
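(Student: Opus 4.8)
The plan is to recognise $e_{b^m,d,\alpha,\bsgamma}(\bsg)-T_{\alpha,\bsgamma}(\bsg,p)$ as a ``tail'' sum and to estimate it one coordinate at a time. Using the worst-case error formula of \RefThm{thm:wce_dig_net} together with the definition of $T_{\alpha,\bsgamma}$, I would first split both quantities according to the support $\setu=\supp(\bsk)$ of the summation index. The membership $\bsk\in\calD(\bsg,p)$ depends only on $\tr_m(\bsk)$, hence only on $\bsk$ modulo $b^m$ componentwise, so the indices with $\supp(\bsk)=\setu$ are exactly those with $\bsk_\setu\in\calD_\setu(\bsg_\setu)$, while $T_{\alpha,\bsgamma}$ produces the same sum but restricted to $A^\ast_\setu$, i.e.\ to $\bsk_\setu\in\{1,\ldots,b^m-1\}^{\abs{\setu}}$. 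Subtracting, the difference becomes
\begin{equation*}
	e_{b^m,d,\alpha,\bsgamma}(\bsg)-T_{\alpha,\bsgamma}(\bsg,p)
	=
	\sum_{\emptyset\neq\setu\subseteq\{1:d\}}\gamma_\setu
	\sum_{\substack{\bsk_\setu\in\calD_\setu(\bsg_\setu)\\ \max_{j\in\setu}k_j\ge b^m}}
	\prod_{j\in\setu}b^{-\alpha\psi_b(k_j)},
\end{equation*}
so it suffices to bound the inner sum $S_\setu$ by $(2\mu_b(\alpha))^{\abs{\setu}}N^{-\alpha}$.

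Next I would control $S_\setu$ by a union bound over the coordinate that is ``large''. Writing $S_\setu\le\sum_{j_0\in\setu}S_{\setu,j_0}$, where in $S_{\setu,j_0}$ the index is restricted to $k_{j_0}\ge b^m$, I would use the hypothesis $\gcd(g_{j_0},p)=1$: since $p$ is irreducible, $g_{j_0}$ is invertible modulo $p$, so for any fixed positive values of the remaining coordinates $(k_j)_{j\in\setu\setminus\{j_0\}}$ the congruence $\tr_m(\bsk_\setu)\cdot\bsg_\setu\equiv0\tpmod p$ determines $\tr_m(k_{j_0})$, and hence $k_{j_0}$ modulo $b^m$, uniquely. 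This decouples the summation: the coordinates $j\neq j_0$ then range freely over $\N$, each contributing a factor $\sum_{k\ge1}b^{-\alpha\psi_b(k)}=\mu_b(\alpha)$.

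It then remains to estimate the single-coordinate tail uniformly in the residue class. For $k_{j_0}\ge b^m$ with $k_{j_0}\equiv h\tpmod{b^m}$, writing $k_{j_0}=h+b^m\ell$ with $\ell\ge1$ and using $h+b^m\ell\ge b^m\ell$ gives $\psi_b(k_{j_0})=\floor{\log_b(h+b^m\ell)}\ge m+\psi_b(\ell)$, whence
\begin{equation*}
	\sum_{\substack{k_{j_0}\ge b^m\\ k_{j_0}\equiv h\,(b^m)}}b^{-\alpha\psi_b(k_{j_0})}
	\le
	b^{-\alpha m}\sum_{\ell\ge1}b^{-\alpha\psi_b(\ell)}
	=
	N^{-\alpha}\mu_b(\alpha),
\end{equation*}
independently of $h$. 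Combining the two previous steps yields $S_{\setu,j_0}\le N^{-\alpha}\mu_b(\alpha)^{\abs{\setu}}$, and therefore $S_\setu\le\abs{\setu}\,N^{-\alpha}\mu_b(\alpha)^{\abs{\setu}}$. Since $\abs{\setu}\le 2^{\abs{\setu}}$, this is at most $(2\mu_b(\alpha))^{\abs{\setu}}N^{-\alpha}$, and inserting this into the weighted sum over $\setu$ gives exactly the stated bound.

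I expect the decoupling step to be the main obstacle: one must argue precisely that the defining congruence of $\calD_\setu$ fixes $k_{j_0}$ modulo $b^m$ but leaves a full residue class to sum over, which is where the invertibility afforded by $\gcd(g_{j_0},p)=1$ and the irreducibility of $p$ are indispensable. The only other point requiring care is the elementary estimate $\psi_b(h+b^m\ell)\ge m+\psi_b(\ell)$, which is what makes the single-variable tail bound uniform in $h$ and thus permits the clean factorisation over the remaining coordinates.
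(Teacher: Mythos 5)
Your proof is correct. Note that the paper does not actually prove \RefProp{prop:trunc_error}; it cites \cite{EKOS2020} for it, so there is no in-text argument to compare against. Your route --- writing $e_{b^m,d,\alpha,\bsgamma}(\bsg)-T_{\alpha,\bsgamma}(\bsg,p)$ as the sum over dual-lattice indices with at least one coordinate $\ge b^m$, splitting by support, applying a union bound over the large coordinate $j_0$, using invertibility of $g_{j_0}$ modulo $p$ to see that the congruence pins down $k_{j_0}$ to a single residue class modulo $b^m$ while the remaining coordinates each contribute a free factor $\mu_b(\alpha)$, and bounding the single-class tail by $N^{-\alpha}\mu_b(\alpha)$ via $\psi_b(h+b^m\ell)\ge m+\psi_b(\ell)$, then absorbing $\abs{\setu}\le 2^{\abs{\setu}}$ --- is the standard truncation argument for such bounds and every step checks out, including the exact identity for the difference (all terms are positive) and the uniformity in the residue class $h$ that permits the factorisation. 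One small correction: you justify the invertibility of $g_{j_0}$ in $\F_b[x]/(p)$ by saying ``since $p$ is irreducible,'' but irreducibility of $p$ is not a hypothesis of the proposition; the stated assumption $\gcd(g_{j_0},p)=1$ already yields invertibility by B\'ezout, so the step is valid --- just attribute it to the gcd condition rather than to irreducibility.
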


Combining the results of Theorem \ref{thm:exist_T} and Proposition \ref{prop:trunc_error}, we can immediately deduce the existence of good polynomial lattice rules with respect to the worst-case error in the space $W_{d,\bsgamma}^\alpha$. We state the corresponding result, which first appeared in \cite{EKOS2020}, in the theorem below. 

\begin{theorem} \label{thm:exist_wce}
	Let $p \in \F_b[x]$ be an irreducible polynomial with $\deg(p)=m$, let $N=b^{m}$, and let 
	$\bsgamma = (\gamma_{\setu})_{\setu \subseteq \{1:d\}}$ be a sequence 
	of positive weights. Then there exists a $\bsg \in G_{b,m}^d$ such that, for all $\alpha >1$, 
	the worst-case error $e_{b^m,d,\alpha,\bsgamma}(\bsg)$ satisfies
	\begin{align*}
	e_{b^m,d,\alpha,\bsgamma}(\bsg) 
	\le 
	\frac{1}{N^\alpha} \left(\sum_{\emptyset\neq \setu \subseteq \{1:d\}} \gamma_{\setu} (2\mu_b(\alpha))^{|\setu|} 
	+ \left(\sum_{\emptyset\neq \setu \subseteq \{1:d\}} \gamma_{\setu}^{1/\alpha}(m(b-1))^{|\setu|}\right)^{\alpha}\right)
	.
	\end{align*}
\end{theorem}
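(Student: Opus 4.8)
The plan is to read the claimed bound as the sum of two pieces and to recognise that \RefProp{prop:trunc_error} already delivers the first piece. Indeed, once we have a generating vector $\bsg$ with $\gcd(g_j,p)=1$ for all $j$, \RefProp{prop:trunc_error} bounds $e_{b^m,d,\alpha,\bsgamma}(\bsg)-T_{\alpha,\bsgamma}(\bsg,p)$ by exactly $N^{-\alpha}\sum_{\emptyset\ne\setu\subseteq\{1:d\}}\gamma_\setu(2\mu_b(\alpha))^{|\setu|}$, which is the first summand in the target estimate. Hence the whole problem reduces to producing a $\bsg$ for which $T_{\alpha,\bsgamma}(\bsg,p)\le N^{-\alpha}\bigl(\sum_{\emptyset\ne\setu\subseteq\{1:d\}}\gamma_\setu^{1/\alpha}(m(b-1))^{|\setu|}\bigr)^{\alpha}$. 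The difficulty is that \RefThm{thm:exist_T} only controls the unsmoothed measure $T_{\bsgamma}=T_{1,\bsgamma}$, and only at the rate $N^{-1}=b^{-m}$, whereas we need the measure $T_{\alpha,\bsgamma}$ at the faster rate $N^{-\alpha}$.

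The key device that bridges this gap is a power-mean (i.e.\ $\ell^p$-monotonicity) inequality. Fix $\alpha>1$ and apply \RefThm{thm:exist_T} not to $\bsgamma$ but to the rescaled weights $\bsgamma^{1/\alpha}=(\gamma_\setu^{1/\alpha})_{\setu}$; this yields a $\bsg\in G_{b,m}^d$ with $T_{\bsgamma^{1/\alpha}}(\bsg,p)\le b^{-m}\sum_{\emptyset\ne\setu\subseteq\{1:d\}}\gamma_\setu^{1/\alpha}(m(b-1))^{|\setu|}$. Because the index set $A_p(\bsg)$ and the exponents $\psi_b(k_j)$ do not depend on the smoothness, one has the pointwise identity $(r_{\alpha,\bsgamma}(\bsk))^{-1}=\bigl((r_{1,\bsgamma^{1/\alpha}}(\bsk))^{-1}\bigr)^{\alpha}$. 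Summing over $\bszero\ne\bsk\in A_p(\bsg)$ and using $\sum_i a_i^{\alpha}\le(\sum_i a_i)^{\alpha}$ for nonnegative $a_i$ and $\alpha\ge1$, I obtain $T_{\alpha,\bsgamma}(\bsg,p)\le\bigl(T_{\bsgamma^{1/\alpha}}(\bsg,p)\bigr)^{\alpha}$. Inserting the \RefThm{thm:exist_T} bound and using $N=b^m$ gives precisely the required estimate on $T_{\alpha,\bsgamma}(\bsg,p)$, and combining it with \RefProp{prop:trunc_error} produces the two-term bound in the statement.

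One technical point must be settled before \RefProp{prop:trunc_error} can be invoked, namely the hypothesis $\gcd(g_j,p)=1$. Since $p$ is irreducible of degree $m$ and each $g_j\in G_{b,m}$ has degree strictly below $m$, this coprimality holds automatically whenever $g_j\ne0$. It therefore suffices to know that the vector selected from \RefThm{thm:exist_T} can be taken in $(G^\ast_{b,m})^d$; this is standard, because a vanishing component leaves its coordinate entirely unconstrained in the dual-net congruence and so contributes a term of order $\gamma_{\{j\}}^{1/\alpha}\,m(b-1)$ to $T$, far exceeding the $b^{-m}$-scale average bound, so that no minimiser has a zero component.

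The genuinely delicate part is not any computation but the power-mean step together with the order of quantifiers. The inequality $\sum_i a_i^{\alpha}\le(\sum_i a_i)^{\alpha}$ is what simultaneously (i)~upgrades the $N^{-1}$ average rate to the optimal $N^{-\alpha}$ and (ii)~replaces $\gamma_\setu$ by $\gamma_\setu^{1/\alpha}$, so correctly identifying the rescaled-weight construction is the crux of the argument. I expect the main obstacle to lie in the quantifier structure: applying \RefThm{thm:exist_T} to $\bsgamma^{1/\alpha}$ produces, a priori, a vector that may depend on $\alpha$, so some care is needed to present the conclusion as a single $\bsg$ valid for every $\alpha>1$, exploiting that the construction criterion can be kept independent of the smoothness parameter.
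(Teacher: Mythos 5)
Your proposal is correct and matches the paper's intended argument: the paper gives no details beyond ``combining Theorem~2 and Proposition~1,'' but the mechanism it has in mind --- apply the existence result to the rescaled weights $\bsgamma^{1/\alpha}$, use the identity $(r_{\alpha,\bsgamma}(\bsk))^{-1}=\bigl((r_{1,\bsgamma^{1/\alpha}}(\bsk))^{-1}\bigr)^{\alpha}$ together with Jensen's inequality to obtain $T_{\alpha,\bsgamma}(\bsg,p)\le\bigl(T_{\bsgamma^{1/\alpha}}(\bsg,p)\bigr)^{\alpha}$, and then add the truncation bound from Proposition~1 --- is exactly the chain of estimates the paper itself spells out later in the proof of Corollary~1. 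The two caveats you flag (coprimality of the components with $p$ so that Proposition~1 applies, and the fact that the vector obtained from Theorem~2 with weights $\bsgamma^{1/\alpha}$ a priori depends on $\alpha$ although the theorem asserts a single $\bsg$ for all $\alpha>1$) are genuine, but they are equally glossed over by the paper, which simply defers to the reference where the result first appeared.
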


While this result is interesting from a theoretical perspective, the involved argument is non-constructive. In the remainder of this article, we will device explicit algorithms for the construction of good polynomial lattice rules. 
	
\section{The CBC construction method for polynomial lattice rules}\label{sec:CBC}

In this section, we formulate and analyze a component-by-component search algorithm for the construction of good polynomial lattice rules. 
The advantage of the presented algorithm is that the involved quality criterion is independent of the smoothness parameter $\alpha$.

\subsection{Preliminary results}

At first, we summarize a number of auxiliary statements which will be needed in the following analysis. The lemmas are 
taken from \cite{EKNO2020} and \cite{EKOS2020}, respectively, where construction algorithms of a similar 
nature as the proposed method have been studied. \\

Consider at first the following Walsh series for $x\in (0,1)$, based on the decay function $r_1$,
\begin{equation*}
	\sum_{k=0}^\infty \frac{\wal_k(x)}{r_1(k)}
	=
	1 + \sum_{k=1}^\infty \frac{e^{2 \pi \icomp (\kappa_{0,k} \xi_1 + \kappa_{1,k} \xi_2 + \cdots )/b}}{b^{\floor{\log_b(k)}}},
\end{equation*}
which, as we shall see, is closely related to the quality measure $T_{\bsgamma}$ in \eqref{eq:qual_meas}. Here and in the following lemma, we use similar base $b$ expansions as introduced above, namely $k = \kappa_{0,k} + \kappa_{1,k} b + \cdots+ \kappa_{a,k} b^{a}$ for $k\in \N_0$ and $x=\xi_1 b^{-1} + \xi_2 b^{-2} + \cdots$ for $x\in [0,1)$.

\begin{lemma} \label{lem:Walsh-series}
	For base $b \ge 2$, the Walsh series of $-(b-1) (\floor{\log_b(x)} + 1)$ equals, pointwise for $x\in (0,1)$,
	\begin{equation*}
	-(b-1) (\floor{\log_b(x)} + 1)
	=
	1 + \sum_{k=1}^\infty \frac{e^{2 \pi \icomp (\kappa_{0,k} \xi_1 + \kappa_{1,k} \xi_2 + \cdots )/b}}{b^{\floor{\log_b(k)}}} 
	=
	\sum_{k=0}^\infty \frac{ \wal_k(x)}{r_{1}(k)} .
	\end{equation*}
\end{lemma}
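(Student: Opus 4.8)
The plan is to prove the identity
\[
-(b-1)\left(\floor{\log_b(x)} + 1\right)
=
\sum_{k=0}^\infty \frac{\wal_k(x)}{r_1(k)}
\]
by computing the Walsh coefficients of the left-hand side directly and checking that they match the coefficients $1/r_1(k)$ appearing on the right. Since the right-hand side is by definition the Walsh series with coefficients $\hat{f}(k) = 1/r_1(k)$, it suffices to show that the function $f(x) := -(b-1)(\floor{\log_b(x)} + 1)$ has these exact Walsh coefficients, i.e.\ that $\hat{f}(0) = 1$ and $\hat{f}(k) = b^{-\floor{\log_b(k)}}$ for $k \ge 1$, together with pointwise convergence of the series to $f$ on $(0,1)$.

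First I would observe that $f$ is a step function: on the dyadic-type interval $x \in [b^{-(a+1)}, b^{-a})$ we have $\floor{\log_b(x)} = -(a+1)$, so $f(x) = -(b-1)(-(a+1)+1) = (b-1)a$, a constant on each such block. This piecewise-constant structure is what makes the Walsh coefficients computable in closed form. For the zeroth coefficient, $\hat{f}(0) = \int_0^1 f(x)\rd x = \sum_{a=0}^\infty (b-1)a \cdot (b^{-a} - b^{-(a+1)}) = (b-1)^2 \sum_{a=0}^\infty a\, b^{-(a+1)}$, and evaluating the geometric-type sum $\sum_{a\ge 0} a\, b^{-(a+1)} = 1/(b-1)^2$ gives $\hat{f}(0) = 1$, as required. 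For $k \ge 1$, I would write $k$ with $\psi_b(k) = \floor{\log_b(k)} = a$, so that $\wal_k$ depends only on the first $a+1$ base-$b$ digits of $x$, meaning $\wal_k$ is constant on each interval of length $b^{-(a+1)}$. The coefficient $\hat{f}(k) = \int_0^1 f(x)\overline{\wal_k(x)}\rd x$ then reduces to a finite sum over these short intervals weighted by the constant values of $f$, and the orthogonality/cancellation of the Walsh characters over full digit-blocks collapses the sum to the single surviving term $b^{-a}$.

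The key computational device is the \emph{Abel summation / telescoping} step: because $f$ jumps by exactly $b-1$ at each breakpoint $b^{-a}$, one can integrate by parts (discretely) so that the integral of $f \cdot \overline{\wal_k}$ against the smooth-in-blocks structure of $\wal_k$ picks out precisely the character sums at the jump points. The fact that $f$ has constant jump size $b-1$ is exactly what produces the clean coefficient $b^{-\floor{\log_b(k)}} = 1/r_1(k)$ with no $k$-dependence beyond the digit length. I expect the main obstacle to be \textbf{the careful bookkeeping of the Walsh-character sums over digit blocks}: one must verify that summing $\wal_k$ over a complete residue block in the leading digit vanishes unless $k$ lies in the correct range, and that the partial block contributes the exact factor $b^{-a}$. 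This requires using the explicit definition of $\wal_k$ from Definition~\ref{def:walsh_functions} and the geometric-sum identity $\sum_{\xi=0}^{b-1} \rme^{2\pi\icomp \kappa \xi/b} = 0$ for $\kappa \not\equiv 0 \pmod b$.

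Finally, I would address pointwise convergence on $(0,1)$. Since $f$ is a bounded step function with jumps only at the countable set $\{b^{-a} : a \ge 1\}$, and each $x \in (0,1)$ that is not such a breakpoint lies in the interior of a block, the partial sums of the Walsh series converge to $f(x)$ there; the restriction to $(0,1)$ (rather than $[0,1)$) and the uniqueness convention on base-$b$ expansions stated in Definition~\ref{def:walsh_functions} handle the boundary subtleties. The identity then holds pointwise as claimed, with the middle expression being simply the right-hand Walsh series written out via the base-$b$ digit expansion of $k$ and $x$.
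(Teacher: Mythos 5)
First, note that the paper does not actually prove this lemma: it is quoted from the references \cite{EKNO2020,EKOS2020}, so there is no in-paper proof to compare against. Judged on its own, your overall strategy is sound and standard. The computation of $\hat f(0)=1$ is correct, and the claim $\hat f(k)=b^{-\floor{\log_b(k)}}$ for $k\ge 1$ does go through by the route you gesture at: writing $f=(b-1)\sum_{a\ge 1}\ind_{[0,b^{-a})}$ (your ``constant jump of size $b-1$'' observation) gives $\hat f(k)=(b-1)\sum_{a\ge 1}\int_0^{b^{-a}}\overline{\wal_k(x)}\rd x$, and the elementary character-sum computation shows $\int_0^{b^{-a}}\overline{\wal_k(x)}\rd x=b^{-a}$ if $a>\floor{\log_b(k)}$ and $0$ otherwise, whence $\hat f(k)=(b-1)\sum_{a>\floor{\log_b(k)}}b^{-a}=b^{-\floor{\log_b(k)}}$. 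You should, however, actually carry out this character-sum bookkeeping rather than assert that it ``collapses to the single surviving term''; it is the heart of the lemma.

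The genuine gap is in your convergence paragraph. The function $f(x)=-(b-1)(\floor{\log_b(x)}+1)$ is \emph{not} a bounded step function on $(0,1)$: it grows like $(b-1)\abs{\log_b x}$ as $x\to 0^+$, its jump points $b^{-a}$ accumulate at $0$, and its total variation on $(0,1)$ is infinite. Consequently you cannot invoke boundedness or bounded variation, and the statement that ``each $x$ in the interior of a constancy block has convergent partial sums'' is not self-evident for Walsh series -- localization for the full sequence of partial sums $S_N$ requires an argument. The clean repair, and almost certainly what the cited source does, is to evaluate the block sums in closed form: for $b^a\le k<b^{a+1}$ one has
\begin{equation*}
\sum_{k=b^a}^{b^{a+1}-1}\frac{\wal_k(x)}{b^{a}}
=
\begin{cases}
b-1, & x< b^{-(a+1)},\\
-1, & b^{-(a+1)}\le x< b^{-a},\\
0, & x\ge b^{-a},
\end{cases}
\end{equation*}
so that for fixed $x\in(0,1)$ only finitely many blocks contribute and their telescoping sum is exactly $1+\big((b-1)A-1\big)=-(b-1)(\floor{\log_b(x)}+1)$ with $A=-\floor{\log_b(x)}-1$. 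This establishes the identity together with convergence along block-aligned partial sums; convergence of the remaining incomplete blocks is exactly the $\tau(x)/(Nx)$ estimate recorded in Lemma~\ref{lem:trunc_walsh_series}, so deferring to the $S_{b^n}$ subsequence here is also acceptable. Either way, replace the appeal to boundedness by this explicit block computation.
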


Based on the result in Lemma \ref{lem:Walsh-series}, it was shown in \cite{EKOS2020} that the function $-(b-1) (\floor{\log_b(x)} + 1)$ 
can be written in terms of its truncated Walsh series with uniformly bounded remainder term.

\begin{lemma}\label{lem:trunc_walsh_series}
	Let $N=b^m$ with $m\in \N$ and base $b \ge 2$. Then for any $x \in (0,1)$ there exists a $\tau(x)\in\R$ with 
	$|\tau(x)|< \frac{b}{b-1}$ such that
	\begin{equation*}
		-(b-1) (\floor{\log_b(x)} + 1)
		=
		\sum_{k=0}^{N-1} \frac{\wal_k(x)}{r_{1}(k)} + \frac{\tau(x)}{N x}
		.
	\end{equation*}
\end{lemma}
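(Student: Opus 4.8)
The plan is to start from the closed form of the full Walsh series provided by Lemma~\ref{lem:Walsh-series} and to identify the remainder term with the \emph{tail} of that series. Since
\[
	\sum_{k=0}^\infty \frac{\wal_k(x)}{r_1(k)} = -(b-1)\left(\floor{\log_b(x)}+1\right)
\]
holds pointwise on $(0,1)$, subtracting the truncated sum $\sum_{k=0}^{N-1}\wal_k(x)/r_1(k)$ reduces the claim to showing
\[
	\left| N x \sum_{k=N}^\infty \frac{\wal_k(x)}{r_1(k)} \right| < \frac{b}{b-1},
\]
because then one simply sets $\tau(x) := Nx\sum_{k=N}^\infty \wal_k(x)/r_1(k)$. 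Thus the whole problem becomes the estimation of a single tail sum.

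Next I would exploit that $r_1(k)=b^a$ is constant on each block $k\in\{b^a,\ldots,b^{a+1}-1\}$ and that $N=b^m$, so that the tail rewrites as $\sum_{a=m}^\infty b^{-a}\sum_{k=b^a}^{b^{a+1}-1}\wal_k(x)$. The key computation is the evaluation of the partial Walsh sums $\sum_{k=0}^{b^{a+1}-1}\wal_k(x)$. Writing $k=\sum_{j=0}^a \kappa_j b^j$ and $x=\sum_{i\ge 1}\xi_i b^{-i}$, the definition of $\wal_k$ factorizes this sum as $\prod_{j=0}^a \sum_{\kappa=0}^{b-1}\e^{2\pi\icomp\,\kappa\,\xi_{j+1}/b}$, and each inner factor is a geometric sum of $b$-th roots of unity equal to $b$ if $\xi_{j+1}=0$ and to $0$ otherwise. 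Hence $\sum_{k=0}^{b^{a+1}-1}\wal_k(x)=b^{a+1}\,\ind_{\{x<b^{-(a+1)}\}}$, and subtracting consecutive partial sums yields the block sum $b^{a+1}\ind_{\{x<b^{-(a+1)}\}}-b^a\ind_{\{x<b^{-a}\}}$.

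Substituting this back and simplifying, the tail collapses to a finite expression governed by the position of $x$ relative to the powers of $b$: using $b^{\floor{\log_b(x)}}\le x<b^{\floor{\log_b(x)}+1}$ one checks that the indicators vanish for all large $a$, so the tail equals $0$ when $x\ge 1/N$, and equals $(b-1)(L-m)-1$ with $L:=-\floor{\log_b(x)}-1$ when $x<1/N$. In the first case $\tau(x)=0$ and there is nothing to prove; in the second case one has $Nx\in[b^{-(j+1)},b^{-j})$ with $j:=L-m\ge 0$, whence $\tau(x)=Nx\,[(b-1)j-1]$ and $|\tau(x)|<b^{-j}\,|(b-1)j-1|$.

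The main obstacle, and the only place requiring genuine care, is the final estimate of this last quantity uniformly in $j\ge 0$ and $b\ge 2$. I would treat $j=0$ directly (there $|\tau(x)|=Nx<1$), and for $j\ge 1$ bound $b^{-j}[(b-1)j-1]<b^{-j}(b-1)j$ and invoke the elementary inequality $(b-1)j\le b^j$, which follows from Bernoulli's inequality $b^j=(1+(b-1))^j\ge 1+j(b-1)$. This gives $|\tau(x)|<1$ throughout, and since $1<b/(b-1)$ for every $b\ge 2$, the stated bound follows with room to spare. A minor technical point is the behaviour at the boundary values $x=b^{-a}$, which is handled automatically by arguing with the inequalities $b^{\floor{\log_b(x)}}\le x<b^{\floor{\log_b(x)}+1}$ rather than with digit expansions directly.
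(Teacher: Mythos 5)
Your argument is correct, and it is complete: starting from the pointwise identity of Lemma~\ref{lem:Walsh-series}, identifying $\tau(x)=Nx\sum_{k\ge N}\wal_k(x)/r_1(k)$, evaluating the block sums $\sum_{k=b^a}^{b^{a+1}-1}\wal_k(x)=b^{a+1}\ind_{\{x<b^{-(a+1)}\}}-b^{a}\ind_{\{x<b^{-a}\}}$ via the roots-of-unity factorization, and then bounding $b^{-j}\,\abs{(b-1)j-1}$ uniformly by Bernoulli's inequality are all sound steps (I checked the resulting formula $\tau(x)=Nx\,[(b-1)(L-m)-1]$ for $x<1/N$ against small cases). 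Note that the paper itself does not prove this lemma but imports it from \cite{EKOS2020}, so there is no in-paper proof to compare against; your derivation is the natural one and in fact establishes the stronger bound $\abs{\tau(x)}<1$, of which the stated $\abs{\tau(x)}<\frac{b}{b-1}$ is a weakening.
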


We will also make use of the following lemma, which was shown in \cite{EKNO2020}.

\begin{lemma} \label{lem:diff_prod}
	For $1\le j \le d$, let $u_j, v_j$, and $\rho_j$ be real numbers satisfying
	\begin{align*}
		(a) \quad u_j = v_j + \rho_j, \quad
		(b) \quad |u_j| \le \bar{u}_j, \quad
		(c) \quad \bar{u}_j \geq 1 ,
	\end{align*}
	for all $j\in\{1 \mcol d\}$. Then, for any set $\emptyset \ne \setu \subseteq \{1 \mcol d\}$, there exists a $\theta_{\setu}$ 
	with $|\theta_{\setu}| \le 1$ such that 
	\begin{align*} 
		\prod_{j \in \setu} u_j
		&=
		\prod_{j \in \setu} v_j + \theta_{\setu} \left(\prod_{j \in \setu} (\bar{u}_j+|\rho_j|) \right) \sum_{j \in \setu} |\rho_j|
		.
	\end{align*}
\end{lemma}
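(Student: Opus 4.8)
The plan is to reduce the claim to a standard product-difference estimate: I will write $\prod_{j\in\setu} u_j - \prod_{j\in\setu} v_j$ as a telescoping sum, bound it term by term using (a)--(c), and then simply \emph{define} $\theta_\setu$ as the resulting normalized quotient, so that the asserted identity holds by construction and the only real content is the bound $|\theta_\setu|\le 1$.

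First I would fix a nonempty $\setu\subseteq\{1\mcol d\}$ and enumerate its elements as $j_1<\cdots<j_s$. Using (a) in the form $u_j-v_j=\rho_j$, I would record the telescoping identity
\begin{equation*}
	\prod_{j\in\setu} u_j - \prod_{j\in\setu} v_j
	=
	\sum_{i=1}^s \rho_{j_i} \left(\prod_{k<i} v_{j_k}\right)\left(\prod_{k>i} u_{j_k}\right),
\end{equation*}
which one verifies by introducing the intermediate products $P_i:=\left(\prod_{k\le i} v_{j_k}\right)\left(\prod_{k>i} u_{j_k}\right)$ and noting that $P_0=\prod_{j\in\setu}u_j$, $P_s=\prod_{j\in\setu}v_j$, and $P_{i-1}-P_i$ equals the $i$-th summand.

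Next I would bound each summand. From (a) and (b), $|v_j|\le|u_j|+|\rho_j|\le\bar u_j+|\rho_j|$, while $|u_j|\le\bar u_j\le\bar u_j+|\rho_j|$ directly, so every factor in both partial products is at most $\bar u_{j_k}+|\rho_{j_k}|$. This gives
\begin{equation*}
	\left|\prod_{j\in\setu} u_j - \prod_{j\in\setu} v_j\right|
	\le
	\sum_{i=1}^s |\rho_{j_i}| \prod_{k\ne i}(\bar u_{j_k}+|\rho_{j_k}|).
\end{equation*}
Here assumption (c) enters crucially: since $\bar u_j+|\rho_j|\ge\bar u_j\ge 1$, discarding the $i$-th factor can only decrease the product, so $\prod_{k\ne i}(\bar u_{j_k}+|\rho_{j_k}|)\le\prod_{k=1}^s(\bar u_{j_k}+|\rho_{j_k}|)$; summing over $i$ yields
\begin{equation*}
	\left|\prod_{j\in\setu} u_j - \prod_{j\in\setu} v_j\right|
	\le
	\left(\prod_{j\in\setu}(\bar u_j+|\rho_j|)\right)\sum_{j\in\setu}|\rho_j|.
\end{equation*}

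Finally, since $\bar u_j\ge 1$ the product $\prod_{j\in\setu}(\bar u_j+|\rho_j|)$ is at least $1$, hence nonzero. If $\sum_{j\in\setu}|\rho_j|>0$, I would set $\theta_\setu$ equal to the difference of products divided by the right-hand side above; then $|\theta_\setu|\le 1$ and the stated identity holds. If $\sum_{j\in\setu}|\rho_j|=0$, then all $\rho_j$ vanish, the two products coincide, and $\theta_\setu:=0$ works. The only genuinely delicate points are the bookkeeping in the telescoping step and the use of (c) to discard one factor per term; everything else is routine estimation.
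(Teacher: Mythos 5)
Your proof is correct and complete: the telescoping identity is verified, the bounds $|u_j|\le \bar u_j+|\rho_j|$ and $|v_j|\le \bar u_j+|\rho_j|$ follow from (a) and (b), condition (c) is used exactly where needed to discard one factor per term, and the definition of $\theta_\setu$ (including the degenerate case $\sum_{j\in\setu}|\rho_j|=0$) is handled properly. Note that the paper itself does not prove this lemma but quotes it from \cite{EKNO2020}; your telescoping argument is the standard proof of such product-difference estimates and is presumably essentially the one given there.
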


Furthermore, we recall the character property of Walsh functions for polynomial lattice rules with prime base $b$. Let $P(\bsg,p)=\{\bsx_0,\ldots,\bsx_{b^m-1}\} \subset [0,1]^d$ be a polynomial lattice with generating vector $\bsg \in (\F_b[x])^d$ and 
modulus $p \in \F_b[x]$ with $\deg(p)=m$. Then, for a vector $\bsk \in \N_0^d$, the following identity holds:
\begin{equation} \label{eq:char-prop}
	\frac{1}{b^m} \sum_{n=0}^{b^m-1} \wal_{\bsk} (\bsx_n)
	=
	\delta_p(\tr_m(\bsk) \cdot \bsg)
	=
	\begin{cases}
	1, & \text{if } \tr_m(\bsk) \cdot \bsg \equiv 0 \pmod{p} , \\
	0, & \text{otherwise} .
	\end{cases}
\end{equation}
We remark that an analogous result to \eqref{eq:char-prop} also holds if we only consider projections of the polynomial lattice and the generating vector onto a non-empty subset $\setu \subseteq \{1 \mcol d\}$, as also the projection of a polynomial lattice is a polynomial lattice that is generated by the corresponding projection of the generating vector.

Additionally, the following two results will be helpful for the subsequent analysis.
\begin{lemma}\label{lem:projections_PLR}
	Let $P(\bsg,p)\subset [0,1]^d$ be a polynomial lattice with modulus $p \in \F_b[x]$ with $\deg(p)=m$ and generating vector 
	$\bsg=(g_1,\ldots,g_d) \in (\F_b[x])^d$ such that $\gcd(g_j,p)=1$ for $1 \le j \le d$. Then each one-dimensional projection of 
	$P(\bsg,p)$ is the full grid 
	\begin{equation*}
		\left\{0,\frac{1}{b^m}, \ldots, \frac{b^m-1}{b^m}\right\} ,
	\end{equation*}
	and in particular the projection of the point with index $0$ is always $0$. 
\end{lemma}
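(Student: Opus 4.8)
The plan is to prove the statement one coordinate at a time, since the $j$-th one-dimensional projection of $P(\bsg,p)$ is precisely the one-dimensional polynomial lattice $\{v_m(n(x) g_j(x)/p(x)) : n \in G_{b,m}\}$. As both this set and the target grid have exactly $b^m$ elements, it suffices to exhibit a bijection between $G_{b,m}$ and the grid, and I would do this by factoring the coordinate map $n \mapsto v_m(n g_j / p)$ as a composition of two bijections.

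First I would observe that $v_m$ only reads off the Laurent coefficients with indices $1,\ldots,m$: if $n(x) g_j(x) = Q(x) p(x) + r(x)$ with $\deg r < m$, then $n g_j / p = Q + r/p$, and since the polynomial part $Q$ contributes only nonpositive powers of $x^{-1}$ that $v_m$ discards, one has $v_m(n g_j/p) = v_m(r/p)$. Thus the coordinate map factors as $n \mapsto r := n g_j \bmod p$, followed by $r \mapsto v_m(r/p)$. The first arrow is a bijection of $G_{b,m}$ (a complete set of residues modulo $p$) onto itself, because $\gcd(g_j,p)=1$ makes $g_j$ a unit in the ring $\F_b[x]/(p)$; note this uses only the coprimality hypothesis and not irreducibility of $p$.

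The hard part will be showing that the second arrow $r \mapsto v_m(r/p)$ is a bijection from $\{r : \deg r < m\}$ onto the grid. The obstacle is that $v_m$ is \emph{not} additive over $\R$, since base-$b$ carries break linearity, so I cannot argue injectivity by naively subtracting real values. The resolution is to pass to the Laurent coefficients, which \emph{are} $\F_b$-linear: writing $r/p = \sum_{\ell \ge 1} t_\ell x^{-\ell}$ and matching coefficients in the identity $r = p \cdot \sum_{\ell \ge 1} t_\ell x^{-\ell}$ yields, for $0 \le k \le m-1$, the relations $r_k = \sum_{\ell=1}^{m-k} p_{k+\ell} \, t_\ell$, where $r_k$ and $p_i$ denote the coefficients of $r$ and $p$. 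This is a triangular $\F_b$-linear system with the nonzero leading coefficient $p_m$ on the diagonal, hence invertible; so $(t_1,\ldots,t_m)$ determines $r$ and conversely. Composing with the base-$b$ bijection $(t_1,\ldots,t_m) \mapsto \sum_{\ell=1}^m t_\ell b^{-\ell}$ onto the grid then establishes the claim.

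Finally, I would assemble the pieces: the coordinate map is a composition of bijections, hence itself a bijection onto the full grid, which proves the projection claim in each coordinate $j$. The assertion about the index-$0$ point is immediate, since $n=0$ gives $v_m(0)=0$ in every coordinate. Equivalently, injectivity can be read off directly: if $v_m(n_1 g_j/p) = v_m(n_2 g_j/p)$, then the coefficients $t_1,\ldots,t_m$ of $(n_1-n_2) g_j / p$ all vanish in $\F_b$, forcing $(n_1-n_2) g_j \equiv 0 \tpmod p$ and thus $n_1 = n_2$ by coprimality together with the degree bound $\deg(n_1-n_2) < m$.
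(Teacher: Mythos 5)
Your proof is correct. The paper itself gives no argument here --- it simply defers to Definition~1 and the cited remark in Dick--Pillichshammer --- and your write-up is a sound, self-contained version of exactly that standard fact: the coordinate map factors through $n \mapsto n g_j \bmod p$ (a bijection of $G_{b,m}$ by coprimality alone) followed by $r \mapsto v_m(r/p)$, whose bijectivity onto the grid you establish via the triangular $\F_b$-linear system relating the coefficients of $r$ to the first $m$ Laurent digits. You also correctly identify and resolve the one genuinely delicate point, namely that $v_m$ is not additive over $\R$, by working with the Laurent coefficients, where everything is $\F_b$-linear; nothing is missing.
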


\begin{proof}
	The result follows from Definition \ref{def:poly_lat} and \cite[Remark 10.3]{DP10}.
\end{proof}

\begin{lemma}\label{lem:sum_PLR}
	Let $P(\bsg,p)=\{\bsx_0,\ldots,\bsx_{b^m-1}\} \subset [0,1]^d$ be a polynomial lattice point set with modulus $p \in \F_b[x]$ with $\deg(p)=m$ 
	and generating vector $\bsg \in (\F_b[x])^d$ such that $\gcd(g_j,p)=1$ for $1 \le j \le d$. Furthermore, let $m\ge 4$. For each point $\bsx_n$ with 
	$n=0,1,\ldots,b^m-1$, we denote its coordinates by $\bsx_n = (x_{n,1},\ldots,x_{n,d})$. Then, for any $j \in \{1,\ldots,d\}$, it is true that
	\begin{equation*}
		\frac{1}{b^m} \sum_{n=1}^{b^m-1} \frac{1}{x_{n,j}}
		<
		1 + m \log b \le m (b-1)
		.
	\end{equation*}
\end{lemma}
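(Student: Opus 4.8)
The plan is to exploit the full-grid structure of the one-dimensional projections established in \RefLem{lem:projections_PLR} to rewrite the sum as a harmonic sum, and then to bound that harmonic sum by an integral comparison.

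First, since $\gcd(g_j,p)=1$ for all $j$, \RefLem{lem:projections_PLR} tells us that the $j$-th coordinates $\{x_{n,j} : n = 0,\ldots,b^m-1\}$ form exactly the full grid $\{0, 1/b^m, 2/b^m, \ldots, (b^m-1)/b^m\}$, with $x_{0,j}=0$. Consequently, as $n$ runs over $\{1,\ldots,b^m-1\}$, the nonzero coordinates $x_{n,j}$ are precisely the points $\ell/b^m$ for $\ell \in \{1,\ldots,b^m-1\}$, each occurring once (this is also exactly why the summation starts at $n=1$, so that the term $1/x_{0,j}=1/0$ is avoided). Reindexing the sum accordingly gives
\begin{equation*}
	\frac{1}{b^m} \sum_{n=1}^{b^m-1} \frac{1}{x_{n,j}}
	=
	\frac{1}{b^m} \sum_{\ell=1}^{b^m-1} \frac{b^m}{\ell}
	=
	\sum_{\ell=1}^{b^m-1} \frac{1}{\ell}
	,
\end{equation*}
so the quantity to be estimated is simply the harmonic number $H_{b^m-1}$, independently of $j$ and of $\bsg$.

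Second, I would bound $H_{b^m-1}$ by the standard integral comparison $\sum_{\ell=2}^{n} 1/\ell < \int_1^n \rd x/x = \log n$, which yields $H_n < 1 + \log n$ (with $\log$ the natural logarithm, matching the convention in the statement). Applying this with $n = b^m-1 < b^m$ gives
\begin{equation*}
	\sum_{\ell=1}^{b^m-1} \frac{1}{\ell}
	<
	1 + \log(b^m-1)
	<
	1 + \log(b^m)
	=
	1 + m \log b
	,
\end{equation*}
which is the first asserted inequality.

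Finally, for the second inequality $1 + m\log b \le m(b-1)$, I would rearrange it as $1 \le m\,(b-1-\log b)$ and study $f(b) := b - 1 - \log b$. Since $f'(b) = 1 - 1/b > 0$ for $b \ge 2$, the function $f$ is increasing on $[2,\infty)$, so $f(b) \ge f(2) = 1 - \log 2 \approx 0.307$; hence $m\,(b-1-\log b) \ge m\,(1-\log 2) \ge 4\,(1-\log 2) > 1$ whenever $m \ge 4$. This last step is where the hypothesis $m \ge 4$ enters, and it is the only genuinely delicate point: the constant $1-\log 2$ is small enough that $m=3$ already fails for the smallest base $b=2$, so one must track the worst case $b=2$ rather than appeal to a cruder bound. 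The rest of the argument is routine once the projection structure from \RefLem{lem:projections_PLR} is in hand.
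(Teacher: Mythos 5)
Your proof is correct. The paper itself states this lemma without giving a proof (it is imported from the companion work \cite{EKOS2020}), so there is no in-paper argument to compare against; but your route --- using \RefLem{lem:projections_PLR} to identify the sum with the harmonic number $H_{b^m-1}$, bounding it by $1+m\log b$ via integral comparison, and then checking $1+m\log b\le m(b-1)$ in the worst case $b=2$, which is exactly where the hypothesis $m\ge 4$ is needed --- is the natural and complete argument.
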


\subsection{An alternative CBC construction for polynomial lattice rules}

In this section, we study a component-by-component (CBC) construction for polynomial lattice rules which is based on a quality criterion related to the quantity $T_{\bsgamma}$. In \cite{EKNO2020} such an algorithm was analyzed for (ordinary) lattice rules. Throughout this section, we will assume that the modulus $p \in \F_b[x]$ is irreducible. 

Concerning the weights, our algorithm can, as indicated in one of our main results (Corollary~\ref{cor:cbc}), be run with respect to the weights $\bsgamma^{1/\alpha} = (\gamma_{\setu}^{1/\alpha})_{\setu \subseteq \{1:d\}}$ to obtain a polynomial lattice rule that yields a low worst-case error in the Walsh space $W_{d,\bsgamma}^\alpha$, or, alternatively, with respect to the weights $\bsgamma$ to obtain good polynomial lattice rules in the space $W_{d,\bsgamma^{\alpha}}^\alpha$. In the latter case, the construction algorithm is independent of the smoothness parameter $\alpha$ and we obtain worst-case error bounds that hold simultaneously for all $\alpha>1$.

In order to avoid confusion, we will therefore denote the weights in this section by $\bseta$ instead of $\bsgamma$ and outline the algorithm based on $\bseta$. In Corollary \ref{cor:cbc}, we will then choose $\bseta$ equal to $\bsgamma^{1/\alpha}$ or $\bsgamma$, respectively. Before we formulate the algorithm, we prove the following theorem.

\begin{theorem} \label{thm:CBC_split}
	Let $b$ be prime, let $m,d \in \N$ with $m \geq 4$, let $p\in \F_b[x]$ be an irreducible polynomial with $\deg(p)=m$, and let 
	$\bseta = (\eta_{\setu})_{\setu \subseteq \{1:d\}}$ be positive weights. Furthermore, let $\bsg = (g_1,\ldots,g_d) \in G_{b,m}^d$ be the generating vector of the polynomial lattice $P(\bsg,p)=\{\bsx_0,\ldots,\bsx_{b^m-1}\} \subset [0,1]^d$. Then the following estimate holds:
	\begin{align*}
		T_{\bseta}(\bsg,p) 
		&\le 
		\sum_{\emptyset\ne \setu \subseteq \{1:d\}} \frac{\eta_\setu}{b^m} \sum_{n=1}^{b^m-1} L_\setu 
		\left(v_m\left(\frac{n(x) \,\bsg_\setu (x)}{p(x)} \right)\right) 
		+ \sum_{\emptyset\ne \setu \subseteq \{1:d\}} \frac{\eta_\setu}{b^m} ((b-1)m)^{|\setu|}
		\\ 
		&\quad+ 
		\sum_{\emptyset\ne \setu \subseteq \{1:d\}} \frac{\eta_\setu}{b^m} \, (b \,m \abs{\setu})
		\left((b-1)m + \frac{b}{b-1} \right)^{\abs{\setu}} ,
	\end{align*}
	where for a subset $\emptyset \ne \setu \subseteq \{1\mcol d\}$, we define the function $L_\setu: (0,1]^{\abs{\setu}} \to \R$ by
	\begin{equation*}
		L_{\setu}(\bsx_\setu) 
		:=    
		\prod_{j\in\setu}\left((1-b) \floor{\log_b(x_j)} - b \right) 
		= 
		\sum_{\bsk_\setu \in \N^{|\setu|}} \frac{\wal_{\bsk_\setu}(\bsx_{\setu})}{\prod_{j\in\setu}b^{\floor{\log_b(k_j)}}}
		.
	\end{equation*}
\end{theorem}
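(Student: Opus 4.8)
The plan is to expand $T_{\bseta}(\bsg,p)$ from \eqref{eq:qual_meas} according to the support of the summation index, convert each resulting dual-lattice sum into an average over the $b^m$ lattice points via the character property \eqref{eq:char-prop}, and then isolate the closed-form quantity $L_\setu$ from two error contributions. Writing $(r_{1,\bseta}(\bsk))^{-1}=\eta_{\supp(\bsk)}\prod_{j\in\supp(\bsk)}b^{-\floor{\log_b(k_j)}}$ and grouping the nonzero $\bsk\in A_p(\bsg)$ by their support $\setu$, the identity $\tr_m(\bsk)\cdot\bsg=\tr_m(\bsk_\setu)\cdot\bsg_\setu$ (the components outside $\setu$ vanish) gives
\begin{equation*}
T_{\bseta}(\bsg,p)=\sum_{\emptyset\ne\setu\subseteq\{1:d\}}\eta_\setu\sum_{\bsk_\setu\in\{1,\ldots,b^m-1\}^{|\setu|}}\delta_p\!\left(\tr_m(\bsk_\setu)\cdot\bsg_\setu\right)\prod_{j\in\setu}b^{-\floor{\log_b(k_j)}}.
\end{equation*}
Replacing $\delta_p$ by its projected form $b^{-m}\sum_{n=0}^{b^m-1}\wal_{\bsk_\setu}(\bsx_{n,\setu})$ from \eqref{eq:char-prop} (with $\bsx_{n,\setu}$ the projection of $\bsx_n$ onto the coordinates in $\setu$) and factoring the sum over $\bsk_\setu$ into a product over $j\in\setu$ turns the inner sum into $\frac{1}{b^m}\sum_{n=0}^{b^m-1}\prod_{j\in\setu}u_{n,j}$, where $u_{n,j}:=\sum_{k=1}^{b^m-1}b^{-\floor{\log_b(k)}}\wal_k(x_{n,j})$.

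Next I would peel off the term $n=0$. By Lemma~\ref{lem:projections_PLR} the point of index $0$ has all coordinates equal to $0$, so $u_{0,j}=\sum_{k=1}^{b^m-1}b^{-\floor{\log_b(k)}}=m(b-1)$ (the inner sum already computed in the proof of Theorem~\ref{thm:exist_T}); this yields precisely the second term $\frac{\eta_\setu}{b^m}((b-1)m)^{|\setu|}$. For $n\ge1$, irreducibility of $p$ (equivalently $\gcd(g_j,p)=1$, which is also what makes $L_\setu$ well defined on $(0,1]^{|\setu|}$) forces $x_{n,j}\in(0,1)$, so Lemma~\ref{lem:trunc_walsh_series} applies and gives $u_{n,j}=L_{\{j\}}(x_{n,j})+\rho_{n,j}$ with $L_{\{j\}}(x)=(1-b)\floor{\log_b(x)}-b$ and $|\rho_{n,j}|<\frac{b}{b-1}\frac{1}{b^m x_{n,j}}$. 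Since $|\wal_k|\equiv1$, the triangle inequality also gives the uniform envelope $|u_{n,j}|\le m(b-1)=:\bar{u}_j\ge1$, so the hypotheses of Lemma~\ref{lem:diff_prod} are met.

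I would then invoke Lemma~\ref{lem:diff_prod} with these $u_{n,j}$, $L_{\{j\}}(x_{n,j})$, $\rho_{n,j}$, $\bar{u}_j$ to write
\begin{equation*}
\prod_{j\in\setu}u_{n,j}=\prod_{j\in\setu}L_{\{j\}}(x_{n,j})+\theta_{n,\setu}\left(\prod_{j\in\setu}(\bar{u}_j+|\rho_{n,j}|)\right)\sum_{j\in\setu}|\rho_{n,j}|,
\end{equation*}
with $|\theta_{n,\setu}|\le1$. The first product equals $L_\setu(\bsx_{n,\setu})=L_\setu(v_m(n(x)\bsg_\setu(x)/p(x)))$, and averaging it over $n\ge1$ produces the first term of the claim. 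For the error, $x_{n,j}\ge b^{-m}$ gives $|\rho_{n,j}|<\frac{b}{b-1}$, hence $\prod_{j\in\setu}(\bar{u}_j+|\rho_{n,j}|)<((b-1)m+\frac{b}{b-1})^{|\setu|}$; pulling this constant out and applying Lemma~\ref{lem:sum_PLR} to each of the $|\setu|$ sums $\frac{1}{b^m}\sum_{n=1}^{b^m-1}x_{n,j}^{-1}<m(b-1)$ collapses the remainder into $\frac{b\,m\,|\setu|}{b^m}((b-1)m+\frac{b}{b-1})^{|\setu|}$, which is the third term. Summing the three contributions against $\eta_\setu$ over all $\emptyset\ne\setu$ finishes the estimate.

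The main obstacle is the control of the error term: one must use the \emph{uniform} bound $\bar{u}_j=m(b-1)$ rather than a coordinate-dependent one, so that Lemma~\ref{lem:diff_prod} factorizes cleanly into a single power $((b-1)m+\frac{b}{b-1})^{|\setu|}$, and then recognize that the apparently singular factors $|\rho_{n,j}|\sim(b^m x_{n,j})^{-1}$ are tamed exactly by Lemma~\ref{lem:sum_PLR}, whose bound on $\frac{1}{b^m}\sum_{n\ge1}x_{n,j}^{-1}$ absorbs the coordinates down to $b^{-m}$. The secondary point, needed already for the main term to make sense, is the nonvanishing of $x_{n,j}$ for $n\ge1$, which is where the irreducibility of $p$ enters.
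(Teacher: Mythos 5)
Your proposal is correct and follows essentially the same route as the paper's proof: expand $T_{\bseta}$ by support via the character property, peel off the $n=0$ point to get the $((b-1)m)^{|\setu|}$ term, and combine Lemma~\ref{lem:trunc_walsh_series}, Lemma~\ref{lem:diff_prod}, and Lemma~\ref{lem:sum_PLR} exactly as the paper does. The only (immaterial) difference is that you apply Lemma~\ref{lem:diff_prod} with the roles of $u_j$ and $v_j$ swapped, verifying $|u_j|\le (b-1)m$ for the truncated Walsh sum by the triangle inequality rather than for $L_{\{j\}}$ via $x_{n,j}\ge b^{-m}$; both assignments yield the identical error term.
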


\begin{proof}
	Using the character property of Walsh functions in \eqref{eq:char-prop}, we can rewrite $T_{\bseta}(\bsg,p)$ with the help of the identity in 
	Lemma \ref{lem:trunc_walsh_series}. First, recall that we have, for $k \in \N_0$, 
	\begin{equation*}
		r_1(k) 
		=
		\left\{\begin{array}{cc}
			1, & {\text{if }} k=0 , \\ 
			b^{\lfloor \log_b(k)\rfloor}, & {\text{if }} k \ne 0 .
		\end{array}\right.
	\end{equation*}
	For a point $\bsx_n$, $0\le n\le b^m-1$, and a set $\emptyset\ne \setu \subseteq \{1 \mcol d\}$, we write $\bsx_{n,\setu}$ to denote the projection of $\bsx_n$ onto the components with indices in $\setu$. Then, we obtain that
	\begin{align} \label{eq:estimate-T}
		T_\bseta(\bsg,p) 
		&=
		\sum_{\bszero \ne \bsk \in \{0,1,\ldots,b^m-1\}^d} \frac{\delta_p(\tr_m(\bsk) \cdot \bsg)}{r_{1,\bseta}(\bsk)} \notag \\
		&=  
		\sum_{\emptyset\ne \setu \subseteq \{1:d\}} \frac{\eta_\setu}{b^m} \sum_{n=0}^{b^m-1}
		\left[ \sum_{\bsk_\setu \in \{1,\dots,b^m-1\}^{|\setu|}} 
		\frac{\wal_{\bsk_\setu}(\bsx_{n,\setu})}{\prod_{j\in\setu} b^{\floor{\log_b (k_j)}}} \right]
		\notag \\
		&= 
		\sum_{\emptyset\ne \setu \subseteq \{1:d\}} \frac{\eta_\setu}{b^m} 
		\left( \sum_{\bsk_\setu \in \{1,\dots,b^m-1 \}^{|\setu|}} \! \frac{1}{\prod_{j\in\setu} b^{\floor{\log_b(k_j)}}} 
		\!+\!\! \sum_{n=1}^{b^m-1} \prod_{j\in\setu} \left[ \sum_{k=1}^{b^m-1}\frac{\wal_k(x_{n,j})}{b^{\floor{\log_b(k)}}} \right] \right) 
		\notag \\
		&=
		\sum_{\emptyset\ne \setu \subseteq \{1:d\}} \frac{\eta_\setu}{b^m} 
		\left( \sum_{\bsk_\setu \in \{1,\dots,b^m-1 \}^{|\setu|}} \! \frac{1}{\prod_{j\in\setu} b^{\floor{\log_b(k_j)}}} 
		\!+\!\! \sum_{n=1}^{b^m-1} \left[ \prod_{j\in\setu} v_j(n) - \prod_{j\in\setu} u_j(n) + \prod_{j\in\setu} u_j(n) \right] \right)
		\notag \\
		&= 
		\!\sum_{\emptyset\ne \setu \subseteq \{1:d\}} \frac{\eta_\setu}{b^m} 
		\sum_{\bsk_\setu \in \{1,\dots,b^m-1 \}^{|\setu|}} \frac{1}{\prod_{j\in\setu} b^{\floor{\log_b(k_j)}}} 
		+ \sum_{\emptyset\ne \setu \subseteq \{1:d\}} \frac{\eta_\setu}{b^m} \sum_{n=1}^{b^m-1} \prod_{j\in\setu} u_j(n)
		\notag \\
		&\quad-
		\sum_{\emptyset\ne \setu \subseteq \{1:d\}} \frac{\eta_\setu}{b^m} 
		\sum_{n=1}^{b^m-1} \theta_{\setu}(n) 
		\left( \prod_{j\in\setu} (\bar{u}_j + |\rho_j(n)|) \right) \sum_{j\in\setu} |\rho_j(n)|
		\notag \\
		&= 
		\!\sum_{\emptyset\ne \setu \subseteq \{1:d\}} \!\frac{\eta_\setu}{b^m} \sum_{n=1}^{b^m-1} L_\setu 
		\left(v_m\left(\frac{n(x) \,\bsg_\setu(x)}{p(x)} \right)\right)
		+
		\sum_{\emptyset\ne \setu \subseteq \{1:d\}} \frac{\eta_\setu}{b^m} 
		\!\sum_{\bsk_\setu \in \{1,\dots,b^m-1 \}^{|\setu|}} \frac{1}{\prod_{j\in\setu} b^{\floor{\log_b(k_j)}}} 
		\notag \\
		&\quad-
		\sum_{\emptyset\ne \setu \subseteq \{1:d\}} \frac{\eta_\setu}{b^m} 
		\sum_{n=1}^{b^m-1} \theta_{\setu}(n) 
		\left( \prod_{j\in\setu} (\bar{u}_j + |\rho_j(n)|) \right) \sum_{j\in\setu} |\rho_j(n)|,
	\end{align}
	where we used that $x_{0,j}=0$ for all $1 \le j \le d$ and Lemma \ref{lem:diff_prod} with
	\begin{align*}
		u_j = u_j(n) &:= -(b-1) (\floor{\log_b(x_{n,j})} + 1) - 1 ,
		&
		\bar{u}_j=\bar{u}_j(n) &:= (b-1)m ,
		\\
		v_j = v_j(n) &:= \sum_{k=1}^{b^m-1} \frac{\wal_k(x_{n,j})}{b^{\floor{\log_b(k)}}} ,
		&
		\rho_j = \rho_j(n) &:= \frac{\tau_j(n)}{x_{n,j} \, b^m}
		,
	\end{align*}
	with $|\theta_\setu(n)| \le 1$ and $\tau_j (n)=\tau_j (x_{n,j})$ with $|\tau_j(n)| \le \frac{b}{b-1}$ for all $n\in\{0,1,\ldots,b^m-1\}$. 
	Due to Lemma \ref{lem:trunc_walsh_series}, Condition~(a) of Lemma \ref{lem:diff_prod} is fulfilled. Furthermore, we recall that by Lemma \ref{lem:projections_PLR} the one-dimensional projections of the points $\bsx_n$ with indices $n \ge 1$ satisfy
	\begin{equation*}
		x_{n,j} 
		=
		v_m\left(\frac{n(x) g_j(x)}{p(x)}\right)
		\ge
		v_m\left(\frac{1}{x^m}\right)
		=
		b^{-m}
	\end{equation*}
	for all $1 \le n \le b^m-1 $ and all $1 \le j \le d$ such that we obtain
	\begin{equation*}
		\abs{u_j(n)}
		\le
		-(b-1) (\floor{\log_b(b^{-m})} + 1)
		= 
		-(b-1)(-m+1) 
		<
		(b-1)m
		= 
		\bar{u}_j
		\quad 
	\end{equation*}
	with $\bar{u}_j \ge 1$ and so Conditions~(b) and~(c) of Lemma \ref{lem:diff_prod} are also fulfilled.

	As in the proof of Theorem \ref{thm:exist_T}, we can express the second sum in \eqref{eq:estimate-T} as
	\begin{equation*}
		\sum_{\emptyset\ne \setu \subseteq \{1:d\}} \frac{\eta_\setu}{b^m} 
		\sum_{\bsk_\setu \in \{1,\dots,b^m-1 \}^{|\setu|}} \frac{1}{\prod_{j\in\setu} b^{\floor{\log_b(k_j)}}}
		=
		\frac{1}{b^m} \sum_{\emptyset\ne \setu \subseteq \{1:d\}} \eta_\setu \,((b-1)m)^{|\setu|}
		,
	\end{equation*}
	while the absolute value of the third sum in \eqref{eq:estimate-T} can be bounded as follows,
	\begin{align*}
		&\abs{ \sum_{\emptyset\ne \setu \subseteq \{1:d\}} \frac{\eta_\setu}{b^m} 
		\sum_{n=1}^{b^m-1} \theta_{\setu}(n) 
		\left( \prod_{j\in\setu} (\bar{u}_j + |\rho_j(n)|) \right) \sum_{j\in\setu} |\rho_j(n)| }
 		\\
		&\quad=
		\abs{ \sum_{\emptyset\ne \setu \subseteq \{1:d\}} \frac{\eta_\setu}{b^m}
		\sum_{n=1}^{b^m-1} \theta_{\setu}(n) \left( \prod_{j\in\setu} \left((b-1)m + \frac{|\tau_j(n)|}{x_{n,j} \, b^m}\right) \right) 
		\sum_{j\in\setu} \frac{|\tau_j(n)|}{x_{n,j} \, b^m} }
		\notag \\
		&\quad\le
		\sum_{\emptyset\ne \setu \subseteq \{1:d\}} \frac{\eta_\setu}{b^m}
		\sum_{n=1}^{b^m-1} |\theta_{\setu}(n) |\left( \prod_{j\in\setu} \left((b-1)m + \frac{b}{b-1} \right) \right) 
		\sum_{j\in\setu} \frac{|\tau_j(n)|}{x_{n,j} \, b^m}
		\notag \\
		&\quad\le
		\sum_{\emptyset\ne \setu \subseteq \{1:d\}} \frac{\eta_\setu}{b^m}
		\left( \prod_{j\in\setu} \left((b-1)m + \frac{b}{b-1} \right) \right) 
		\sum_{j\in\setu} \sum_{n=1}^{b^m-1} \frac{b}{(b-1) b^m} \frac{1}{x_{n,j}}
		\notag \\
		&\quad\le	
		\sum_{\emptyset\ne \setu \subseteq \{1:d\}} \frac{\eta_\setu}{b^m}
		\left( \prod_{j\in\setu} \left((b-1)m + \frac{b}{b-1} \right) \right) \frac{b}{b-1}
		\sum_{j\in\setu} m (b-1)
		\notag \\
		&\quad=
		\frac{1}{b^m} \sum_{\emptyset\ne \setu \subseteq \{1:d\}} \eta_\setu \, (b \,m \abs{\setu})
		\left((b-1)m + \frac{b}{b-1} \right)^{\abs{\setu}}
		,
	\end{align*}
	where we used Lemma \ref{lem:sum_PLR} and the fact that $x_{n,j} \ge b^{-m}$ for each $j$ and all $1 \le n < b^m$.
	Then combining the estimates yields the claimed result.
\end{proof}

Theorem \ref{thm:CBC_split} indicates that it is reasonable to search for generating vectors $\bsg$ such that 
\begin{align*}
	\sum_{\emptyset\neq\setu\subseteq\{1:d\}} 
	\frac{\eta_\setu}{b^m}\sum_{n=1}^{b^m-1} L_\setu \left(v_m\left(\frac{n(x) \, \bsg_\setu(x)}{p(x)}\right)\right)
\end{align*}
is small, as then also $T_{\bseta}(\bsg_,p)$ is sufficiently small. We will now give the definition of the quality function 
that we try to minimize in the component-by-component algorithm.

\begin{definition}[Quality Function] \label{eq:qual_fun}
	For a generating vector $\bsg = (g_1,\dots,g_d) \in (G^\ast_{b,m})^d$ with prime $b \ge 2$, positive integers $m,d \in \N$, positive weights 
	$\bseta = (\eta_\setu)_{\setu\subseteq\{1:d\}}$, and an  irreducible polynomial $p \in \F_b[x]$ with $\deg(p) = m$, we define the quality function
	\begin{equation*}
		K_{b^m,d,\bseta}(\bsg) =K_{b^m,d,\bseta}(\bsg, p)
		:=
		\sum_{\emptyset\neq\setu \subseteq \{1:d\}} \eta_{\setu} \sum_{n=1}^{b^m-1} L_{\setu} 
		\left(v_m\left(\frac{n(x) \, \bsg_\setu(x)}{p(x)}\right)\right)
		.
	\end{equation*}
\end{definition}
\noindent
Based on this quality function, we formulate the following component-by-component construction.

\begin{algorithm}[H] 
	\caption{Component-by-component construction}	
	\label{alg:cbc}
	\vspace{5pt}
	\textbf{Input:} Prime number $b$, integers $m,d \in \N$, and positive weights $\bseta=(\eta_\setu)_{\setu\subseteq \{1:d\}}$. \\
	\vspace{-10pt}
	\begin{algorithmic}
		\STATE Choose an irreducible polynomial $p \in \F_b[x]$ with $\deg(p) = m$.
		\vspace{7pt}
		\STATE Set $g_1 \equiv 1$.
		\FOR{$s=2$ \bf{to} $d$}
		\STATE $g_s = \argmin\limits_{g \in G_{b,m}^*} K_{b^m,s,\bseta}(g_1,\ldots,g_{s-1},g)$
		\ENDFOR
	\end{algorithmic}
	\vspace{5pt}
	\textbf{Return:} Generating vector $\bsg=(g_1,\ldots,g_d) \in (G_{b,m}^\ast)^d$.
\end{algorithm}
In the following section, we analyze the worst-case error behavior of polynomial lattice rules with generating vectors constructed by Algorithm~\ref{alg:cbc}.

\subsection{Error bounds for the obtained polynomial lattice rules}

In the next theorem, we show an upper bound on the quantity $K_{b^m,d,\bseta}(\bsg)$ for $\bsg$ constructed by Algorithm \ref{alg:cbc}.

\begin{theorem} \label{thm:cbc_bound}
	Let $b$ be prime, let $m,d \in \N$, let $p \in \F_b[x]$ be irreducible with $\deg(p) = m$, and let $\bseta = (\eta_\setu)_{\setu\subseteq\{1:d\}}$ be positive weights. Then the corresponding generating vector $\bsg$ constructed by Algorithm~\ref{alg:cbc} satisfies
	\begin{equation*}
		K_{b^m,s,\bseta}(\bsg) 
		\le 
		\sum_{\emptyset\ne \setu \subseteq \{1:s\}} \eta_\setu ((b-1) m)^{|\setu|}
	\end{equation*}
	for every $s\in\{1,\ldots,d\}$.
\end{theorem}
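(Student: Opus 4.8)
The plan is to argue by induction on $s$, following the averaging principle that underlies every component-by-component construction. Write $B_s := \sum_{\emptyset\ne\setu\subseteq\{1:s\}}\eta_\setu\,((b-1)m)^{|\setu|}$ for the claimed bound. Since $K_{b^m,s,\bseta}$ depends only on $g_1,\ldots,g_s$ and sums over nonempty $\setu\subseteq\{1:s\}$, I would first split off the sets containing $s$: the sets with $s\notin\setu$ reproduce $K_{b^m,s-1,\bseta}(g_1,\ldots,g_{s-1})$ exactly, so that
\[
K_{b^m,s,\bseta}(g_1,\ldots,g_{s-1},g) = K_{b^m,s-1,\bseta}(g_1,\ldots,g_{s-1}) + \Delta_s(g),
\]
where, using the product structure $L_{\setv\cup\{s\}}(\bsx_{\setv\cup\{s\}}) = L_\setv(\bsx_\setv)\,L_{\{s\}}(x_s)$ (with $L_\emptyset\equiv 1$),
\[
\Delta_s(g) = \sum_{\setv\subseteq\{1:s-1\}}\eta_{\setv\cup\{s\}}\sum_{n=1}^{b^m-1} L_\setv\!\left(v_m\!\left(\frac{n(x)\,\bsg_\setv(x)}{p(x)}\right)\right) L_{\{s\}}\!\left(v_m\!\left(\frac{n(x)\,g(x)}{p(x)}\right)\right).
\]
As the first summand is independent of $g$, the minimiser $g_s$ produced by Algorithm~\ref{alg:cbc} also minimises $\Delta_s$, so $\Delta_s(g_s)\le\frac{1}{b^m-1}\sum_{g\in G^\ast_{b,m}}\Delta_s(g)$. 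The induction will then close provided this average is at most the increment $B_s-B_{s-1} = \sum_{\setv\subseteq\{1:s-1\}}\eta_{\setv\cup\{s\}}((b-1)m)^{|\setv|+1}$.

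To evaluate the average I would exploit that, for fixed $1\le n\le b^m-1$, multiplication by $n(x)$ permutes the nonzero residues modulo the irreducible $p$ (note $\gcd(n,p)=1$ since $\deg n(x)<m=\deg p$). Because $v_m(n(x)g(x)/p(x))$ depends only on $n(x)g(x)\bmod p$, and $h\mapsto v_m(h(x)/p(x))$ is a bijection from the residues modulo $p$ onto the grid $\{t/b^m: t=0,\ldots,b^m-1\}$ (the content of Lemma~\ref{lem:projections_PLR}), the values $v_m(n(x)g(x)/p(x))$, as $g$ runs over $G^\ast_{b,m}$, run exactly once through the nonzero grid points. Hence the inner average $\frac{1}{b^m-1}\sum_{g\in G^\ast_{b,m}} L_{\{s\}}(v_m(n g/p))$ is independent of $n$ and equals $\bar C := \frac{1}{b^m-1}\sum_{t=1}^{b^m-1} L_{\{s\}}(t/b^m)$. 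The pivotal computation is then the exact evaluation of this single-variable sum: substituting $\floor{\log_b(t/b^m)} = \floor{\log_b t} - m$ and counting the $t$ with a given value of $\floor{\log_b t}$, a short calculation shows that the contributions of order $b^m$ cancel and leave $\sum_{t=1}^{b^m-1} L_{\{s\}}(t/b^m) = -(b-1)m$, so that $|\bar C| = (b-1)m/(b^m-1)$.

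Factoring $\bar C$ out of the average reduces the task to bounding $P_\setv := \sum_{n=1}^{b^m-1} L_\setv(v_m(n\bsg_\setv/p))$ for each $\setv\subseteq\{1:s-1\}$. Here I would use a uniform pointwise estimate on the one-dimensional factors: by Lemma~\ref{lem:projections_PLR} the coordinates of the nonzero points satisfy $x_{n,j}\ge b^{-m}$, hence $\floor{\log_b x_{n,j}}\in\{-m,\ldots,-1\}$ and $|L_{\{j\}}(x_{n,j})| = |(1-b)\floor{\log_b x_{n,j}} - b|\le (b-1)m$. Taking the product over $j\in\setv$ and summing over the $b^m-1$ nonzero points gives $|P_\setv|\le (b^m-1)((b-1)m)^{|\setv|}$ (with equality when $\setv=\emptyset$). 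Combining this with $|\bar C| = (b-1)m/(b^m-1)$ and the triangle inequality yields
\[
\Big|\tfrac{1}{b^m-1}\sum_{g\in G^\ast_{b,m}}\Delta_s(g)\Big|\le |\bar C|\sum_{\setv\subseteq\{1:s-1\}}\eta_{\setv\cup\{s\}}|P_\setv|\le \sum_{\setv\subseteq\{1:s-1\}}\eta_{\setv\cup\{s\}}((b-1)m)^{|\setv|+1} = B_s-B_{s-1}.
\]
Since $\Delta_s(g_s)$ is at most this average, the inductive step follows; the base case $s=1$ is immediate because $K_{b^m,1,\bseta}(1) = \eta_{\{1\}}\sum_{t=1}^{b^m-1}L_{\{1\}}(t/b^m) = -\eta_{\{1\}}(b-1)m\le\eta_{\{1\}}(b-1)m = B_1$.

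I expect the main obstacle to be precisely the exact evaluation $\sum_{t=1}^{b^m-1}L_{\{s\}}(t/b^m) = -(b-1)m$. A term-by-term triangle-inequality bound only gives $|\bar C|\le (b-1)m$, which is far too weak; it is solely the cancellation producing the factor $1/(b^m-1)$ in $\bar C$ that makes the per-component increment collapse to the clean quantity $((b-1)m)^{|\setv|+1}$. Once this identity and the uniform estimate $|L_{\{j\}}|\le(b-1)m$ on the grid coordinates are in place, the rest is the standard CBC averaging bookkeeping.
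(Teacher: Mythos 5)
Your proposal is correct and follows essentially the same route as the paper's proof: induction on $s$, the standard averaging over $g\in G^\ast_{b,m}$, the exact evaluation $\sum_{g\in G^\ast_{b,m}} L_{\{s\}}(v_m(n(x)g(x)/p(x))) = -m(b-1)$ (which the paper obtains by the same grid-bijection argument used for its base case), and the uniform bound $|L_\setv|\le ((b-1)m)^{|\setv|}$ from $b^{-m}\le v_m(\cdot)\le 1-b^{-m}$. The only cosmetic difference is that you isolate and minimise the increment $\Delta_s(g)$ while the paper averages the full quantity $K_{b^m,s,\bseta}(g_1,\ldots,g_{s-1},g)$; since the remaining terms are independent of $g$, the two are equivalent.
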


\begin{proof}
	We prove the result by induction on $s \in \{1,\dots,d\}$. Using that $g_1 \equiv 1$, we obtain for $s=1$ that
	\begin{align*}
		K_{b^m,1,\bseta} (g_1) 
		= 
		K_{b^m,1,\bseta}(1) 
		&= 
		\eta_{\{1\}} \sum_{n=1}^{b^m-1} L_{\{1\}} \left(v_m\left(\frac{n(x)}{p(x)}\right)\right)
		\\
		&= 
		\eta_{\{1\}} \sum_{n=1}^{b^m-1}\left( -(b-1)\floor{\log_b\left(v_m\left(\frac{n(x)}{p(x)}\right)\right)} - b\right)
		\\
		&= 
		- \eta_{\{1\}} (b-1) \sum_{n=1}^{b^m-1} \floor{\log_b\left(v_m\left(\frac{n(x)}{p(x)}\right)\right)} - 
		\eta_{\{1\}} \, b \,(b^m-1)
		.
	\end{align*}
	In order to deal with the sum in the latter expression, we first observe that for any fraction 
	$\frac{n}{b^m}$ with $b^t \le n < b^{t+1}$ for $0 \le t < m$ we have that	
	\begin{equation*}
		\log_b \left(\frac{b^t}{b^m}\right) 
		\le 
		\log_b \left(\frac{n}{b^m}\right)
		<
		\log_b \left(\frac{b^{t+1}}{b^m}\right)
		.
	\end{equation*}
	Therefore, we find that
	\begin{equation*}
		-(m-t) \le \log_b \left(\frac{n}{b^m}\right) < -(m-t-1),
	\end{equation*}
	and hence $\lfloor \log_b (n/b^m) \rfloor= - (m-t)$. This identity and the application of Lemma \ref{lem:projections_PLR} for the case $d=1$ yield
	\begin{equation} \label{eq:estimate_log_b}
		-\sum_{n=1}^{b^m-1} \floor{\log_b\left(v_m\left(\frac{n(x)}{p(x)}\right)\right)}
		= 
		-\sum_{n=1}^{b^m-1} \floor{\log_b\left(\frac{n}{b^m}\right)}
		= 
		\sum_{t=0}^{m-1} (b-1) \, b^t \, (m-t) 
		= 
		\frac{b^{m+1}  - b m + m - b}{b-1}
	\end{equation}
	since there are exactly $(b-1) b^t$ numbers $n$ with $b^t \le n < b^{t+1}$. Therefore, we obtain that
	\begin{equation*}
		K_{b^m,1,\bseta} (g_1) 
		= 
		\eta_{\{1\}} (b^{m+1} - bm + m - b - b^{m+1} + b)
		= 
		-\eta_{\{1\}} (b-1) m
		<
		\eta_{\{1\}} (b-1) m
		.
	\end{equation*}
	
	Consider then $s\ge 2$ and assume that the claimed statement holds for $s-1$, that is, 
	\begin{equation*}
		K_{b^m,s-1,\bseta}(g_1,\dots,g_{s-1})
		\le 
		\sum_{\emptyset\ne \setu \subseteq \{1:s-1\}} \eta_\setu((b-1)m)^{|\setu|} .
	\end{equation*}
	By the standard averaging argument and the induction assumption we obtain
	\begin{align*}
		&K_{b^m,s,\bseta}(g_1,\dots,g_{s-1},g_s) 
		\le \frac{1}{b^m-1} \sum_{g\in G_{b,m}^\ast} K_{b^m,s,\bseta}(g_1,\dots,g_{s-1},g) 
		\\
		&\quad=
		\frac{1}{b^m-1} \sum_{g\in G^\ast_{b,m}} \left[ \sum_{\emptyset\ne \setu \subseteq \{1:s-1\}} \eta_\setu \sum_{n=1}^{b^m-1} 
		L_\setu \left(v_m\left(\frac{n(x)\, \bsg_\setu(x)}{p(x)} \right) \right) \right.
		\\
		&\qquad\qquad\qquad\qquad\quad+ \left. \sum_{\setv \subseteq \{1:s-1\}} \eta_{\setv\cup\{s\}}\sum_{n=1}^{b^m-1} 
		L_{\setv\cup\{s\}} 
		\left(v_m \left(\frac{n(x) \, \bsg_{\setv}(x)}{p(x)}\right), v_m \left(\frac{n(x) \, g(x)}{p(x)}\right)\right) \right] 
		\\
		&\quad= 
		K_{b^m,s-1,\bseta}(\bsg_{\{1:s-1\}}) + \frac{1}{b^m-1} 
		\sum_{\setv \subseteq \{1:s-1\}} \eta_{\setv\cup\{s\}} 
		\sum_{n=1}^{b^m-1} L_{\setv}\left( v_m\left(\frac{n(x) \,\bsg_\setv(x)}{p(x)} \right) \right) \times
		\\ 
		&\quad\qquad\qquad\qquad\qquad\qquad\,\,\,\, \times 
		\sum_{g\in G^*_{b,m}} \left( -(b-1) \floor{\log_b\left(v_m\left(\frac{n(x) \, g(x) }{p(x)} \right) \right)} - b \right).
	\end{align*}
	Using an argument similar to that which lead to \eqref{eq:estimate_log_b}, we obtain 
	\begin{equation*}
		 \sum_{g\in G^*_{b,m}} \!\!\! \left( -(b-1) \floor{\log_b\left(v_m\left(\frac{n(x) \, g(x) }{p(x)} \right) \right)} - b \right)
		 = 
		 b^{m+1}- m (b-1) -b -b (b^m-1) = -m(b-1)
		 ,
	\end{equation*}
	which in turn gives
	\begin{align*}
		&K_{b^m,s,\bseta}(g_1,\dots,g_{s-1},g_s) 
		\\
		&\le
		K_{b^m,s-1,\bseta}(\bsg_{\{1:s-1\}}) + \frac{1}{b^m-1} \sum_{\setv \subseteq \{1:s-1\}} \eta_{\setv\cup\{s\}} \sum_{n=1}^{b^m-1} L_{\setv}\left( v_m\left(\frac{n(x) \, \bsg_\setv(x)}{p(x)} \right) \right)(-m(b-1))
		\\
		&\le 
		\sum_{\emptyset\ne \setu \subseteq \{1:s-1\}} \eta_\setu((b-1)m)^{|\setu|} 
		+ \frac{m(b-1)}{b^m-1} \sum_{n=1}^{b^m-1} \sum_{\setv\subseteq\{1:s-1\}} \eta_{\setv\cup\{s\}} 
		\left|L_\setv \left( v_m\left( \frac{n(x)\,\bsg_\setv(x)}{p(x)} \right) \right)\right|
		.
	\end{align*}
	 Considering the term
	\begin{equation*}
		\left|L_\setv \left( v_m\left( \frac{n(x)\,\bsg_\setv(x)}{p(x)} \right) \right)\right|
		= 
		\prod_{j\in\setv} \left| -(b-1) \floor{\log_b\left( v_m\left(\frac{n(x) \, g_j(x)}{p(x)} \right) \right)} - b \right|
		,
	\end{equation*}
	we see that since 
	$1-b^{-m} \ge v_m\left(\frac{n(x) \, g(x)}{p(x)} \right) \ge b^{-m}$ for $g \in G_{b,m}^\ast$ 
	and $\deg(n) < m$ we have
	\begin{equation*}
		b-1
		\le
		-(b-1) \floor{\log_b\left( v_m\left(\frac{n(x) \,g(x)}{p(x)} \right) \right) } 
		\le 
		(b-1) m
	\end{equation*}
	and thus
	\begin{equation*}
		\left|L_\setv \left( v_m\left( \frac{n(x)\,\bsg_\setv(x)}{p(x)} \right) \right)\right|
		\le
		(m(b-1))^{|\setv|}
		.
	\end{equation*}
	Hence, the estimation yields
	\begin{align*}
		K_{b^m,s,\bseta}(g_1,\dots,g_{s-1},g_s) 
		&\le\!\!
		\sum_{\emptyset\ne \setu \subseteq \{1:s-1\}} \!\!\! \eta_\setu((b-1)m)^{|\setu|} 
		+ \frac{m(b-1)}{b^m-1} \sum_{n=1}^{b^m-1} \! \sum_{\setv \subseteq \{1:s-1\}} \!\!\! \eta_{\setv\cup\{s\}} (m(b-1))^{|\setv|}
		\\
		&=\!\!
		\sum_{\emptyset\ne \setu \subseteq \{1:s-1\}} \!\!\! \eta_\setu((b-1)m)^{|\setu|} 
		+ \sum_{\setv \subseteq \{1:s-1\}} \!\!\! \eta_{\setv\cup\{s\}} (m(b-1))^{|\setv|+1}
		\\
		&=\!\! 
		\sum_{\emptyset\neq \setu \subseteq \{1:s\}} \eta_\setu((b-1)m)^{|\setu|} 
		,
	\end{align*}
	which is the claimed result.
\end{proof}

Theorem \ref{thm:cbc_bound} allows us to prove the main result regarding the construction in Algorithm \ref{alg:cbc}.

\begin{theorem} \label{thm:cbc-T}
	Let $b$ be prime, let $m,d \in \N$ with $m \geq 4$, let $p \in \F_b[x]$ be irreducible with $\deg(p) = m$, and let
	$\bseta = (\eta_\setu)_{\setu\subseteq\{1:d\}}$ be positive weights. Furthermore, let $\bsg \in (G^\ast_{b,m})^d$ be 
	the generating vector constructed by Algorithm~\ref{alg:cbc}. Then the following estimate holds:
	\begin{equation*}
		T_{\bseta}(\bsg,p)
		\le
		\frac{1}{b^m} \sum_{\emptyset\neq \setu \subseteq \{1:d\}} \eta_\setu \, 
		\left( 2 ((b-1)m)^{|\setu|} + b \,m \left(2 (b-1)m + \frac{2b}{b-1} \right)^{\abs{\setu}} \right)
		.
	\end{equation*}
	Moreover, if the weights $\bseta$ satisfy
	\begin{equation*}
		\sum_{j \ge 1} \max_{\setv \subseteq \{1 \mcol j-1\}} \frac{\eta_{\setv \cup \{j\}}}{\eta_\setv}
		<
		\infty
		,
	\end{equation*}
	then $T_{\bseta}(\bsg,p)$ can be bounded independently of the dimension $d$.
\end{theorem}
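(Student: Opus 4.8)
The plan is to feed the three-term estimate of Theorem~\ref{thm:CBC_split} into the bound on the quality function from Theorem~\ref{thm:cbc_bound}, and then to extract dimension-independence from a standard product bound for the weights $\bseta$. The explicit inequality is essentially bookkeeping; the only substantive point is the uniform-in-$d$ estimate at the end.

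First I would note that the first of the three sums on the right-hand side of Theorem~\ref{thm:CBC_split} is, by the definition of the quality function $K_{b^m,d,\bseta}$, equal to $b^{-m}\,K_{b^m,d,\bseta}(\bsg)$. Since $\bsg$ is the vector produced by Algorithm~\ref{alg:cbc}, Theorem~\ref{thm:cbc_bound} bounds it by $b^{-m}\sum_{\emptyset\ne\setu\subseteq\{1:d\}}\eta_\setu((b-1)m)^{|\setu|}$. This has exactly the same shape as the second sum in Theorem~\ref{thm:CBC_split}, so adding the two yields the contribution $b^{-m}\sum_{\setu}\eta_\setu\,2((b-1)m)^{|\setu|}$, which matches the first summand of the claimed bound.

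Next I would simplify the third sum of Theorem~\ref{thm:CBC_split}, namely $b^{-m}\sum_\setu\eta_\setu\,(bm|\setu|)\left((b-1)m+\frac{b}{b-1}\right)^{|\setu|}$. Using the elementary inequality $|\setu|\le 2^{|\setu|}$, valid for every nonempty $\setu$, I can push the factor $|\setu|$ into a factor $2^{|\setu|}$ on the base and obtain the bound $b^{-m}\sum_\setu\eta_\setu\,bm\left(2(b-1)m+\frac{2b}{b-1}\right)^{|\setu|}$. Summing the three contributions then gives precisely the explicit estimate in the statement.

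For the dimension-independent bound I would abbreviate $C_1:=(b-1)m$ and $C_2:=2(b-1)m+\frac{2b}{b-1}$, both independent of $d$, so that it suffices to control $\sum_{\emptyset\ne\setu\subseteq\{1:d\}}\eta_\setu C^{|\setu|}$ uniformly in $d$ for $C\in\{C_1,C_2\}$. Writing $\omega_\setu:=\eta_\setu C^{|\setu|}$, I would first establish, by induction on $d$, the product estimate
\begin{equation*}
\sum_{\setu\subseteq\{1:d\}}\omega_\setu\le\omega_\emptyset\prod_{j=1}^{d}\left(1+\max_{\setv\subseteq\{1:j-1\}}\frac{\omega_{\setv\cup\{j\}}}{\omega_\setv}\right),
\end{equation*}
the inductive step splitting off the subsets containing $d$ and bounding each ratio $\omega_{\setv\cup\{d\}}/\omega_\setv$ by its maximum over $\setv\subseteq\{1:d-1\}$. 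Since $\omega_{\setv\cup\{j\}}/\omega_\setv=C\,\eta_{\setv\cup\{j\}}/\eta_\setv$, combining this with $1+t\le e^t$ gives
\begin{equation*}
\sum_{\setu\subseteq\{1:d\}}\eta_\setu C^{|\setu|}\le\eta_\emptyset\exp\left(C\sum_{j\ge1}\max_{\setv\subseteq\{1:j-1\}}\frac{\eta_{\setv\cup\{j\}}}{\eta_\setv}\right),
\end{equation*}
which is finite by the summability hypothesis and does not depend on $d$. Applying this to $C_1$ and $C_2$ and inserting the results into the explicit estimate yields a bound on $T_{\bseta}(\bsg,p)$ independent of the dimension. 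The only step requiring care is this product estimate together with the observation that rescaling $\eta_\setu$ by $C^{|\setu|}$ multiplies every per-coordinate ratio by the single constant $C$, so that the summability condition on $\bseta$ transfers verbatim to $\omega$ and the exponential bound follows at once.
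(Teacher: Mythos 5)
Your derivation of the explicit bound is exactly the paper's: you feed \RefThm{thm:cbc_bound} into the first sum of \RefThm{thm:CBC_split}, merge it with the second sum to produce the factor $2$, and absorb $\abs{\setu}\le 2^{\abs{\setu}}$ into the base of the third sum; this part is correct and identical in every step. Where you genuinely diverge is the dimension-independence claim. The paper disposes of it by invoking \cite[Lemma 4]{EKNO2020}, which shows that $\sum_{\emptyset\ne\setu\subseteq\{1:d\}}\eta_\setu(4bm)^{|\setu|}=\calO(b^{m\delta/2})$ for every $\delta>0$ with a $d$-independent implied constant. You instead prove a self-contained product estimate $\sum_{\setu\subseteq\{1:d\}}\omega_\setu\le\omega_\emptyset\prod_{j=1}^{d}\bigl(1+\max_{\setv\subseteq\{1:j-1\}}\omega_{\setv\cup\{j\}}/\omega_\setv\bigr)$ by induction, observe that rescaling by $C^{|\setu|}$ multiplies each per-coordinate ratio by $C$, and conclude via $1+t\le \e^t$. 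Your induction is sound (the base weight $\eta_\emptyset$ is positive since it appears in the denominator of the $j=1$ term of the hypothesis), and it proves the theorem as literally stated, with the advantage of not outsourcing anything to an external lemma. The trade-off is quantitative: since $C_1,C_2=\Theta(m)$, your exponential bound grows like $\e^{cm}=N^{c/\log b}$ for a \emph{fixed} exponent, whereas the paper's route yields $\calO(N^{\delta})$ for \emph{arbitrary} $\delta>0$. That sharper growth rate is not part of the statement of \RefThm{thm:cbc-T}, but it is exactly what the proof of \RefCol{cor:cbc} extracts from this theorem's proof, so your argument, while sufficient here, would not by itself support the downstream almost-optimal convergence rate.
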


\begin{proof}
	Using the bound on $T_{\bseta}(\bsg,p)$ in Theorem \ref{thm:CBC_split} and inserting for $\bsg$ the generating vector
	obtained from Algorithm \ref{alg:cbc}, for which the bound on $K_{b^m,d,\bseta}(\bsg)$ in Theorem \ref{thm:cbc_bound} 
	holds, yields
	\begin{align*}
		T_{\bseta}(\bsg,p) 
		&\le 2\sum_{\emptyset\neq \setu \subseteq \{1:d\}} \frac{\eta_\setu}{b^m} ((b-1)m)^{|\setu|}
		+ 
		\sum_{\emptyset\ne \setu \subseteq \{1:d\}} \frac{\eta_\setu}{b^m} \, (b \,m \abs{\setu})
		\left((b-1)m + \frac{b}{b-1} \right)^{\abs{\setu}}
		\\
		&\le
		\frac{1}{b^m} \sum_{\emptyset\neq \setu \subseteq \{1:d\}} \eta_\setu \, 
		\left( 2 ((b-1)m)^{|\setu|} + b \,m \left(2 (b-1)m + \frac{2b}{b-1} \right)^{\abs{\setu}} \right)
		,
	\end{align*}
	where we used that $\abs{\setu} \le 2^{\abs{\setu}}$. This proves the first claim. Furthermore, we easily find that
	\begin{align*}
		b^m \,T_{\bseta}(\bsg,p) 
		&\le
		\sum_{\emptyset\neq \setu \subseteq \{1:d\}} \eta_\setu
		\left( 2 ((b-1)m)^{|\setu|} + b m \left(2 (b-1)m + \frac{2b}{b-1} \right)^{\abs{\setu}} \right)
		\\
		&\le
		 \sum_{\emptyset\neq \setu \subseteq \{1:d\}} \eta_\setu (4bm)^{|\setu|} 
		+ b m \sum_{\emptyset\neq \setu \subseteq \{1:d\}} \eta_\setu \left(4 b m \right)^{\abs{\setu}}
		\\
		&=
		(1+ b m) \sum_{\emptyset\neq \setu \subseteq \{1:d\}} \eta_\setu \left(4 b m \right)^{\abs{\setu}}
		\le
		C(\delta/2) \, b^{m \delta / 2} \sum_{\emptyset\neq \setu \subseteq \{1:d\}} \eta_\setu \left(4 b m \right)^{\abs{\setu}}
	\end{align*}
	for arbitrary $\delta>0$, where $C(\delta/2)$ is a constant depending only on $\delta$. We can now directly use the result in 
	\cite[Lemma 4]{EKNO2020} with $N=b^m$ and $a = 4 b / \log b$ to see that the sum in the last expression is of order 
	$\calO(b^{m \delta / 2})$. This yields the claimed result.
\end{proof}

Theorem \ref{thm:cbc-T} immediately implies the following result with respect to the worst-case error.

\begin{corollary} \label{cor:cbc}
	Let $b$ be prime, let $m,d \in \N$ with $m\ge 4$ and $N=b^m$, let $p \in \F_b[x]$ be irreducible with $\deg(p) = m$, and let 
	$\bsgamma = (\gamma_\setu)_{\setu\subseteq\{1:d\}}$ be positive weights, satisfying
	\begin{equation*}
		\sum_{j \ge 1} \max_{\setv \subseteq \{1 \mcol j-1\}} \frac{\gamma_{\setv \cup \{j\}}}{\gamma_\setv}
		<
		\infty
		.
	\end{equation*} 
	Then, for any $\delta>0$ and each $\alpha>1$, Algorithm \ref{alg:cbc}, run for the weights 
	$\bseta=\bsgamma = (\gamma_\setu)_{\setu\subseteq\{1:d\}}$, constructs a generating vector $\bsg \in (G^\ast_{b,m})^d$
	such that the worst-case error $e_{b^m,d,\alpha,\bsgamma^\alpha}(\bsg)$  satisfies
	\begin{equation*}
		e_{b^m,d,\alpha,\bsgamma^\alpha}(\bsg) 
		\le 
		\frac{1}{N^\alpha} \left(\sum_{\emptyset\neq \setu \subseteq \{1:d\}} 
		\gamma_{\setu}^\alpha(2\mu_b(\alpha))^{|\setu|} + \left(C_1(\bsgamma,\delta)\right)^{\alpha} \, N^{\alpha \delta} \right)
		, 
	\end{equation*}
	with a positive constant $C_1(\bsgamma,\delta)$ independent of $d$ and $N$.
	
	Additionally, if 
	\begin{equation*}
		\sum_{j \ge 1} \max_{\setv \subseteq \{1 \mcol j-1\}} \left( \frac{\gamma_{\setv \cup \{j\}}}{\gamma_\setv} \right)^{1/\alpha}
		<
		\infty
		,
	\end{equation*} 
	then Algorithm \ref{alg:cbc}, run for the weights $\bseta=\bsgamma^{1/\alpha} = (\gamma_\setu^{1/\alpha})_{\setu\subseteq\{1:d\}}$, 
	constructs a generating vector $\widetilde{\bsg} \in (G^\ast_{b,m})^d$ such that the worst-case error $e_{b^m,d,\alpha,\bsgamma}(\bsg)$ satisfies
	\begin{equation*}
		e_{b^m,d,\alpha,\bsgamma}(\widetilde{\bsg}) 
		\le 
		\frac{1}{N^\alpha} \left(\sum_{\emptyset\neq \setu \subseteq \{1:d\}} 
		\gamma_{\setu}(2\mu_b(\alpha))^{|\setu|} + \left(C_2(\bsgamma^{1/\alpha},\delta)\right)^{\alpha} \, N^{\alpha \delta} \right)
		, 
	\end{equation*}
	with a positive constant $C_2(\bsgamma^{1/\alpha},\delta)$ independent of $d$ and $N$.
\end{corollary}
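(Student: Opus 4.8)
The plan is to deduce both worst-case error bounds from the dimension-independent estimate on $T_{\bseta}(\bsg,p)$ in Theorem~\ref{thm:cbc-T}, using two ingredients: an exact algebraic identity between the $\alpha$-weighted and the $1$-weighted decay functions, together with the monotonicity of $\ell^p$-norms. First I would record the identity. Since $r_{\alpha,\bsgamma}(\bsk) = \gamma_{\supp(\bsk)}^{-1}\prod_{j\in\supp(\bsk)} b^{\alpha\floor{\log_b(k_j)}}$ and $r_{1,\bsgamma}(\bsk) = \gamma_{\supp(\bsk)}^{-1}\prod_{j\in\supp(\bsk)} b^{\floor{\log_b(k_j)}}$, a direct comparison gives
\begin{equation*}
	r_{\alpha,\bsgamma^\alpha}(\bsk) = \left(r_{1,\bsgamma}(\bsk)\right)^\alpha
	\qquad\text{and}\qquad
	r_{\alpha,\bsgamma}(\bsk) = \left(r_{1,\bsgamma^{1/\alpha}}(\bsk)\right)^\alpha .
\end{equation*}
Summing the reciprocals over $\bszero \ne \bsk \in A_p(\bsg)$, this turns the truncated $\alpha$-quality measures into sums of $\alpha$-th powers of the reciprocal $1$-weighted decay function.

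Next I would apply the elementary inequality $\sum_i a_i^\alpha \le \left(\sum_i a_i\right)^\alpha$, valid for non-negative reals $a_i$ and $\alpha \ge 1$ (that is, $\|\cdot\|_\alpha \le \|\cdot\|_1$). Applied to the non-negative terms $\left(r_{1,\bsgamma}(\bsk)\right)^{-1}$ and $\left(r_{1,\bsgamma^{1/\alpha}}(\bsk)\right)^{-1}$, this yields
\begin{equation*}
	T_{\alpha,\bsgamma^\alpha}(\bsg,p) \le \left(T_{\bsgamma}(\bsg,p)\right)^\alpha
	\qquad\text{and}\qquad
	T_{\alpha,\bsgamma}(\widetilde{\bsg},p) \le \left(T_{\bsgamma^{1/\alpha}}(\widetilde{\bsg},p)\right)^\alpha .
\end{equation*}
This is the key bridge that allows the $\alpha$-free construction of Algorithm~\ref{alg:cbc} to control the $\alpha$-dependent truncated error.

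Then I would invoke Proposition~\ref{prop:trunc_error}. For a generating vector produced by Algorithm~\ref{alg:cbc} and an irreducible $p$ with $\deg(p)=m$ one has $\gcd(g_j,p)=1$ automatically, since $g_j \ne 0$ with $\deg(g_j) < m = \deg(p)$ rules out $p \mid g_j$; hence the proposition is applicable. Applying it with the weights $\bsgamma^\alpha$ and with $\bsgamma$ respectively, and combining with the two displays above, gives
\begin{align*}
	e_{b^m,d,\alpha,\bsgamma^\alpha}(\bsg)
	&\le
	\left(T_{\bsgamma}(\bsg,p)\right)^\alpha
	+ \frac{1}{N^\alpha}\sum_{\emptyset\neq\setu\subseteq\{1:d\}}\gamma_\setu^\alpha (2\mu_b(\alpha))^{|\setu|},
	\\
	e_{b^m,d,\alpha,\bsgamma}(\widetilde{\bsg})
	&\le
	\left(T_{\bsgamma^{1/\alpha}}(\widetilde{\bsg},p)\right)^\alpha
	+ \frac{1}{N^\alpha}\sum_{\emptyset\neq\setu\subseteq\{1:d\}}\gamma_\setu (2\mu_b(\alpha))^{|\setu|}.
\end{align*}
Each claim is thereby reduced to a dimension-independent bound on $T_{\bsgamma}(\bsg,p)$, respectively $T_{\bsgamma^{1/\alpha}}(\widetilde{\bsg},p)$.

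Finally I would apply Theorem~\ref{thm:cbc-T}. Running Algorithm~\ref{alg:cbc} with $\bseta=\bsgamma$, the summability hypothesis of the first part is exactly the dimension-independence condition of that theorem, so its proof yields $T_{\bsgamma}(\bsg,p) \le C_1(\bsgamma,\delta)\,N^{-1+\delta}$ with $C_1$ independent of $d$ and $N$; raising to the power $\alpha$ produces $\left(C_1(\bsgamma,\delta)\right)^\alpha N^{-\alpha}N^{\alpha\delta}$, and factoring out $N^{-\alpha}$ gives the asserted bound. The second part is identical with $\bseta=\bsgamma^{1/\alpha}$, producing the constant $C_2(\bsgamma^{1/\alpha},\delta)$. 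The only step that requires genuine care is the norm-monotonicity inequality, which is precisely what lets the $\alpha$-independent quantity $T_{\bseta}$ dominate the $\alpha$-dependent $T_{\alpha,\cdot}$; one must only confirm that all summands are non-negative and that $\alpha > 1$, both of which hold. Everything else is bookkeeping of the constants furnished by Theorem~\ref{thm:cbc-T} and Proposition~\ref{prop:trunc_error}.
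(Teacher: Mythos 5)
Your proposal is correct and follows essentially the same route as the paper's proof: Proposition~\ref{prop:trunc_error} to reduce to the truncated quantity, the identity $r_{\alpha,\bseta^\alpha}(\bsk)=(r_{1,\bseta}(\bsk))^{\alpha}$ combined with Jensen's inequality $\sum_i a_i^\alpha\le(\sum_i a_i)^\alpha$ to pass from $T_{\alpha,\bseta^\alpha}$ to $(T_{\bseta})^\alpha$, and Theorem~\ref{thm:cbc-T} for the dimension-independent bound $b^mT_{\bseta}(\bsg,p)\le C\,N^{\delta}$. Your explicit verification that $\gcd(g_j,p)=1$ holds automatically for $g_j\in G^\ast_{b,m}$ and irreducible $p$ is a small detail the paper leaves implicit, but otherwise the two arguments coincide.
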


\begin{proof}
	We know by Proposition \ref{prop:trunc_error} that the worst-case error satisfies 
	\begin{align} \label{eq:est_1_cor}
		e_{b^m,d,\alpha,\bseta^\alpha}(\bsg) 
		\le
		\frac{1}{N^{\alpha}} \sum_{\emptyset\neq \setu \subseteq \{1:d\}} 
		\eta_{\setu}^\alpha (2 \mu_b(\alpha))^{\abs{\setu}} + T_{\alpha,\bseta^\alpha}(\bsg,p).
	\end{align} 
	Then, using the fact that for $\alpha \ge 1$ we have $\sum_i x_i^\alpha \le \left( \sum_i x_i \right)^\alpha$ for $x_i \ge 0$ (which is sometimes referred to as Jensen's inequality) yields the estimate
	\begin{align} \label{eq:est_2_cor}
		T_{\alpha,\bseta^\alpha}(\bsg,p) \notag
		&= 
		\sum_{\mathbf{0}\neq\bsk\in A_p(\bsg)} (r_{\alpha,\bseta^\alpha}(\bsk))^{-1} 
		= 
		\sum_{\mathbf{0}\neq\bsk\in A_p(\bsg)} (r_{1,\bseta}(\bsk))^{-\alpha}
		\\ 
		&\le
		\left( \sum_{\mathbf{0}\neq\bsk\in A_p(\bsg)} (r_{1,\bseta}(\bsk))^{-1}\right)^{\alpha} = \left(T_{\bseta}(\bsg,p)\right)^{\alpha}
		.
	\end{align}
	By using the estimate obtained in Theorem \ref{thm:cbc-T}, we can deduce from this that, for arbitrary $\delta >0$,
	\begin{equation} \label{eq:est_3_cor}
		b^m T_{\bseta}(\bsg,p)
		\le
		C_1(\bseta,\delta) \, N^\delta,
	\end{equation}
	where $C_1(\bseta,\delta)$ is a constant depending only on $\delta$ and $\bseta$. Combining the obtained estimates in \eqref{eq:est_2_cor} and \eqref{eq:est_3_cor} with \eqref{eq:est_1_cor}, yields the claimed inequality for the choice $\bseta=\bsgamma$. The result for the choice $\bseta=\bsgamma^{1/\alpha}$ follows analogously.
\end{proof}

The result in Corollary \ref{cor:cbc} consists of two statements regarding the worst-case error behavior of generating vectors constructed by Algorithm \ref{alg:cbc}. On the one hand, when run with weights $\bsgamma^{1/\alpha}$, and hence depending on the parameter $\alpha$, the algorithm yields typical error bounds for the worst-case error in the space $W_{d,\bsgamma}^\alpha$. We emphasize that this type of result could also be obtained by formulating and using the CBC algorithm common in the literature (see below), which is instead directly based on the search criterion $e_{b^m,d,\alpha,\bsgamma}$. On the other hand, when run with weights $\bsgamma$, and thus independently of $\alpha$, the algorithm produces generating vectors for which bounds on the worst-case errors in the spaces $W_{d,\bsgamma^\alpha}^\alpha$ hold simultaneously for all $\alpha > 1$.

\begin{remark}
	In the recent article \cite{DG21}, it is shown that polynomial lattice rules which were constructed for the weighted Walsh space $W_{d,\bsgamma}^\alpha$ can also achieve the almost optimal convergence rate for the space $W_{d,\bsgamma'}^{\alpha'}$ with different smoothness parameter $\alpha' > 1$ and weight sequence $\bsgamma'$, provided that certain conditions on both weight sequences are satisfied. While in \cite{DG21} the relation between the different weight sequences and smoothness parameters may allow to transfer certain results, our algorithm (when run with weights $\bsgamma$) is independent of $\alpha$ and delivers QMC rules for which error bounds hold simultaneously for all $\alpha > 1$. Nevertheless, it would be interesting to investigate how the condition on the weight sequence $\bsgamma$ obtained here (see Corollary \ref{cor:cbc}) compares to the conditions in \cite{DG21}. We leave this question open for future research. 
\end{remark}

\section{Fast implementation of the CBC construction} \label{sec:fast_impl}

The fast component-by-component construction was first introduced in \cite{NC06b} for the case of (ordinary) lattice rules with a prime number of points and with the corresponding worst-case error as the quality criterion. A corresponding fast implementation for polynomial lattice rules was first analyzed in \cite{NC06a}. In this section, we discuss the efficient implementation of Algorithm \ref{alg:cbc} and analyze its complexity. Throughout the next two sections, we will consider the implementation of the CBC construction for the special case of product weights $\gamma_{\setu} = \prod_{j\in\setu} \gamma_j$ for a sequence of positive reals $(\gamma_j)_{j\ge1}$.

\subsection{Implementation and cost analysis of the CBC algorithm}

By using the same technique as in \cite{EKNO2020}, it is possible to rewrite the quality function $K_{b^m,d,\bsgamma}$ in Definition \ref{eq:qual_fun} for product weights $\bsgamma_\setu = \prod_{j\in \setu} \gamma_j$ with $(\gamma_j)_{j\geq 1} \subset \R_+$. For this purpose, 
let $N = b^m$ and consider $\bsg \in G_{b,m}^d$ with prime $b$ and positive integer $m$. Then, we see that $K_{N,d,\bsgamma}$ equals
\begin{align*}
	K_{N,d,\bsgamma}(\bsg)
	&=
	\sum_{\emptyset\neq\setu\subseteq\{1 \mcol d\}}\bsgamma_\setu \sum_{n=1}^{N-1} 
	L_\setu \left(v_m\left(\frac{n(x) \, \bsg_\setu(x)}{p(x)}\right)\right)
	\\
	&=
	\sum_{n=1}^{N-1} \left[ -1 + \prod_{j=1}^{d} \left( 1 + \gamma_j L_{\{j\}} 
	\left( v_m\left( \frac{n(x) \, g_j(x)}{p(x)} \right)\right) \right) \right] \\
	&=
	-(N-1) + \sum_{n=1}^{N-1} \prod_{j=1}^{d} \left( 1 + \gamma_j \left( (1-b)\left\lfloor\log_b\left(v_m\left(\frac{n(x) \,g_j(x)}{p(x)} \right) 
	\right) \right\rfloor - b \right) \right)
	.
\end{align*}

For short, we define the quantity
\begin{equation*}
	\bar{K}_{N,d,\bsgamma}(\bsg)
	:=
	\sum_{n=1}^{N-1} \prod_{j=1}^d 
	\left( 1 + \gamma_j \left( (1-b)\left\lfloor\log_b\left(v_m\left(\frac{n(x) \,g_j(x)}{p(x)} \right) \right) \right\rfloor - b \right) \right)
	.
\end{equation*}

We observe that as the term $N-1$ is constant, we can equivalently minimize the function $\bar{K}_{N,s,\bsgamma}(\bsg)$ instead of $K_{N,s,\bsgamma}(\bsg)$ in each step of Algorithm \ref{alg:cbc}. Noting that this function has the same structure as the worst-case error expression which is minimized in the standard CBC algorithm, see, e.g., \cite{NC06b}, we can employ the same machinery to obtain a fast implementation of Algorithm \ref{alg:cbc}. We summarize the computational cost of this fast implementation in the following proposition.

\begin{proposition} \label{prop:cost-CBC-alg}
	Let $m,d \in \N$ and set $N = b^m$ with prime $b$. For a given sequence of positive weights $\bsgamma = (\gamma_j)_{j=1}^d$, a generating vector $\bsg = (g_1,\dots,g_d)$ can be computed via Algorithm \ref{alg:cbc} using $\mathcal{O}(d \, N \log N)$ operations.
\end{proposition}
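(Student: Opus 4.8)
The plan is to show that each of the $d$ steps of Algorithm~\ref{alg:cbc} can be executed in $\calO(N\log N)$ operations, with a one-time preprocessing cost that stays within the same budget. First I would exploit the product structure recorded in the expression for $\bar{K}_{N,d,\bsgamma}$ above. Writing $\omega(q) := (1-b)\floor{\log_b(v_m(q(x)/p(x)))} - b = L_{\{j\}}(v_m(q(x)/p(x)))$ for a nonzero residue $q$ modulo $p$, we have $\bar{K}_{N,s,\bsgamma}(\bsg) = \sum_{n=1}^{N-1}\prod_{j=1}^{s}\bigl(1 + \gamma_j\,\omega(n(x)g_j(x))\bigr)$, where the argument of $\omega$ is understood modulo $p$. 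Hence, once the components $g_1,\dots,g_{s-1}$ are fixed, maintaining the partial-product vector $P_{s-1}(n) := \prod_{j=1}^{s-1}\bigl(1 + \gamma_j\,\omega(n(x)g_j(x))\bigr)$ for $n=1,\dots,N-1$ lets the quality of any candidate $g$ in position $s$ be written as $\bar{K}_{N,s,\bsgamma}(g_1,\dots,g_{s-1},g) = \sum_{n=1}^{N-1}P_{s-1}(n) + \gamma_s\sum_{n=1}^{N-1}P_{s-1}(n)\,\omega(n(x)g(x))$. The first sum is independent of $g$, so the only $g$-dependent object to evaluate is the vector $\bigl(\sum_{n=1}^{N-1}P_{s-1}(n)\,\omega(n(x)g(x))\bigr)_{g \in G^\ast_{b,m}}$.

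The core step is to evaluate this vector simultaneously over all candidates $g$. Stacking the values yields a matrix--vector product $\Omega\,\mathbf{P}_{s-1}$ with $\Omega_{g,n} = \omega(n(x)g(x))$ and $\mathbf{P}_{s-1} = (P_{s-1}(n))_n$. Here I would use that, since $p$ is irreducible, $\F_b[x]/(p)$ is a field with $N = b^m$ elements whose multiplicative group is cyclic of order $N-1$; note that both index sets, the candidates $g \in G^\ast_{b,m}$ and the residues $n(x)$ for $n=1,\dots,N-1$, run exactly over this group. Fixing a generator $\zeta$ and reindexing the rows and columns of $\Omega$ by the discrete logarithm to base $\zeta$, the entry $\Omega_{g,n}$ depends only on $\log_\zeta g + \log_\zeta n \pmod{N-1}$, because $\omega$ sees $g$ and $n$ only through the product residue $g(x)n(x) \bmod p$. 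After this reindexing $\Omega$ is therefore circulant of size $(N-1)\times(N-1)$, and the product $\Omega\,\mathbf{P}_{s-1}$ becomes a cyclic convolution of length $N-1$.

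A cyclic convolution of length $N-1$ is computable by the fast Fourier transform in $\calO(N\log N)$ operations, so each step costs $\calO(N\log N)$. The tables realizing the discrete logarithm and its inverse are obtained once by enumerating the powers $\zeta^0,\zeta^1,\dots,\zeta^{N-2}$ through successive multiplications, which stays within $\calO(N\log N)$ operations and is applied thereafter as a pair of permutations in $\calO(N)$ per step; once the minimizing $g_s$ is chosen, the update of $P_{s-1}$ to $P_s$ also costs $\calO(N)$. Summing over the $d$ components gives the total bound $\calO(d\,N\log N)$. Since $\bar{K}_{N,s,\bsgamma}$ has exactly the product form of the worst-case error minimized in the classical fast CBC construction, this is the machinery of Nuyens and Cools~\cite{NC06a,NC06b}, and I would invoke it rather than reprove the FFT step. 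The main obstacle is establishing the circulant structure precisely, namely that the discrete-log reindexing turns $\Omega$ into a circulant matrix; this rests on $\omega$ depending on $g$ and $n$ only via $g(x)n(x)\bmod p$ together with the cyclicity of $(\F_b[x]/(p))^\ast$.
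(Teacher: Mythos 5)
Your proposal is correct and follows essentially the same route as the paper: rewrite the quality function for product weights into the product form $\bar{K}_{N,s,\bsgamma}$, observe that it has the same structure as the worst-case error minimized in the standard fast CBC construction, and invoke the Nuyens--Cools machinery (circulant structure via the discrete logarithm in the cyclic group $(\F_b[x]/(p))^\ast$, FFT-based cyclic convolution of length $N-1$, and $\calO(N)$ partial-product updates). The paper simply cites this machinery without spelling it out, whereas you make the circulant reindexing and the per-step cost accounting explicit; the content is the same.
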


Since the fast implementation of Algorithm \ref{alg:cbc} can be done entirely analogously as for the standard CBC construction, 
we omit further implementation details and refer the reader to \cite{DKS13,NC06a}.

\section{Numerical results} \label{sec:num}

In this section, we present the results of some numerical experiments. Firstly, we compare the worst-case errors of generating vectors
constructed by the CBC algorithm with smoothness-independent quality function $K_{b^m,d,\bsgamma}$, i.e., Algorithm \ref{alg:cbc}, 
and the standard CBC algorithm for polynomial lattice rules (with the worst-case error as the quality function, as given in, e.g., \cite{N14,NC06a}) for several choices of positive weight sequences. Finally, we compare the computational costs of implementations 
of Algorithm \ref{alg:cbc} and the standard CBC algorithm for polynomial lattice rules.

The algorithms considered were all implemented using Python 3.6.3. The implementations are available in double-precision as well as in arbitrary-precision floating-point arithmetic with the latter provided by the multiprecision Python library mpmath.

\subsection{Error convergence behavior}

We considered the convergence rate of the worst-case error corresponding to a generating vector $\bsg$ constructed by Algorithm \ref{alg:cbc}  
and compared it to the error rates for polynomial lattice rules constructed by the standard CBC algorithm for different positive weight sequences $\bsgamma = (\gamma_j)_{j\ge 1}$. In particular, we computed the error $e_{b^m,d,\alpha,\bsgamma^\alpha}(\bsg)$ for dimension $d = 100$ for different values of $m$ and different values of the smoothness parameter $\alpha$. We would like to illustrate the almost optimal error rates of $\calO(N^{-\alpha+\delta})$ which can be achieved according to Corollary \ref{cor:cbc}, but may not be visible for the considered weights and the range of $m$ in our numerical experiments. Therefore, the presented graphs are to be understood as an illustration of the pre-asymptotic error behavior.

\begin{figure}[H]\small
	\centering
	\textbf{Error convergence in the space $W_{d,\bsgamma}^{\alpha}$ with $d=100, \alpha=1.5,2,3$.} \par\medskip 
	\hspace{-0.25cm}
	\centering
	\begin{subfigure}[b]{0.5\textwidth}
		\centering
		\begin{tikzpicture}

\begin{axis}[%
width=0.8\textwidth,
height=0.8\textwidth,
at={(0\textwidth,0\textwidth)},
scale only axis,
xmode=log,
xmin=42.6666666666667,
xmax=98304,
xminorticks=true,
xlabel style={font=\color{white!15!black}},
xlabel={Number of points $N=2^m$},
ymode=log,
ymin=1e-14,
ymax=1,
yminorticks=true,
ylabel style={font=\color{white!15!black}},
ylabel={Worst-case error $e_{N,d,\alpha,\mathbf{\gamma^{\alpha}}}(\mathbf{g})$},
axis background/.style={fill=white},
axis x line*=bottom,
axis y line*=left,
xmajorgrids,
xminorgrids,
ymajorgrids,
yminorgrids,
minor grid style={opacity=0},
legend style={at={(0.03,0.03)}, anchor=south west, legend cell align=left, align=left, draw=white!15!black}
]
\addplot [color=mycolor1, line width=0.9pt, mark=o, mark options={solid, mycolor1}, forget plot]
  table[row sep=crcr]{%
64	0.037624973273776\\
128	0.0152991284527486\\
256	0.00615396438059079\\
512	0.00247519771456294\\
1024	0.000991765322570814\\
2048	0.000394917502187285\\
4096	0.000157713889992297\\
8192	6.29653231131374e-05\\
16384	2.49659476038386e-05\\
32768	9.88785761504665e-06\\
65536	3.8615654141101e-06\\
};
\addplot [color=mycolor2, line width=0.9pt, mark=triangle, mark options={solid, mycolor2}, forget plot]
  table[row sep=crcr]{%
64	0.0376151186365635\\
128	0.0152933248325304\\
256	0.00618948770005742\\
512	0.00247447287359493\\
1024	0.0009979148524534\\
2048	0.000395689821292074\\
4096	0.00015735848167537\\
8192	6.27210633415442e-05\\
16384	2.49346925597582e-05\\
32768	9.8131128921238e-06\\
65536	3.86074203669712e-06\\
};
\addplot [color=red, dashed, line width=0.9pt]
  table[row sep=crcr]{%
64	0.0752499465475519\\
128	0.0300355034124222\\
256	0.0119884665255882\\
512	0.00478511472445327\\
1024	0.00190994593656391\\
2048	0.000762341906235857\\
4096	0.000304283577287464\\
8192	0.000121452716490454\\
16384	4.84770242101342e-05\\
32768	1.93492738917424e-05\\
65536	7.7231308282202e-06\\
};
\addlegendentry{\scriptsize $\mathcal{O}(N^{-1.33})$}

\addplot [color=mycolor3, line width=0.9pt, mark=o, mark options={solid, mycolor3}, forget plot]
  table[row sep=crcr]{%
64	0.00154803166332284\\
128	0.000440585775940739\\
256	0.000123174658678543\\
512	3.45770574289323e-05\\
1024	9.74323691507743e-06\\
2048	2.7117084502335e-06\\
4096	7.58438904347318e-07\\
8192	2.16171884131783e-07\\
16384	6.01086272100687e-08\\
32768	1.68999764389143e-08\\
65536	4.59086816815954e-09\\
};
\addplot [color=mycolor4, line width=0.9pt, mark=triangle, mark options={solid, mycolor4}, forget plot]
  table[row sep=crcr]{%
64	0.00154678921195924\\
128	0.00044006393769157\\
256	0.000122429006985678\\
512	3.45517665647055e-05\\
1024	9.87865886386179e-06\\
2048	2.71347371275297e-06\\
4096	7.54074982760637e-07\\
8192	2.13669729864059e-07\\
16384	6.0070244689273e-08\\
32768	1.65029649618247e-08\\
65536	4.58542657148393e-09\\
};
\addplot [color=darkgray, dashed, line width=0.9pt]
  table[row sep=crcr]{%
64	0.00309606332664568\\
128	0.000867004463582347\\
256	0.000242791138476522\\
512	6.79898886323633e-05\\
1024	1.90395126661024e-05\\
2048	5.33171990504074e-06\\
4096	1.49306537642736e-06\\
8192	4.18109776580458e-07\\
16384	1.17085151147543e-07\\
32768	3.2787878655607e-08\\
65536	9.18173633631909e-09\\
};
\addlegendentry{\scriptsize $\mathcal{O}(N^{-1.84})$}

\addplot [color=mycolor5, line width=0.9pt, mark=o, mark options={solid, mycolor5}, forget plot]
  table[row sep=crcr]{%
64	9.64007051184768e-06\\
128	1.32610885905417e-06\\
256	1.78528593987912e-07\\
512	2.43018972808131e-08\\
1024	3.34862659554665e-09\\
2048	4.52158064317184e-10\\
4096	6.09442619802181e-11\\
8192	8.69880366130298e-12\\
16384	1.17487039916772e-12\\
32768	1.65042792429017e-13\\
65536	2.27184771783602e-14\\
};
\addplot [color=mycolor6, line width=0.9pt, mark=triangle, mark options={solid, mycolor6}, forget plot]
  table[row sep=crcr]{%
64	9.63120981937429e-06\\
128	1.3243470327549e-06\\
256	1.77417418283065e-07\\
512	2.43736888765063e-08\\
1024	3.40110026357102e-09\\
2048	4.47401526916463e-10\\
4096	6.06306431225999e-11\\
8192	8.47943711288475e-12\\
16384	1.16290107298636e-12\\
32768	1.56993422619375e-13\\
65536	2.21601217849075e-14\\
};
\addplot [color=black, dashed, line width=0.9pt]
  table[row sep=crcr]{%
64	1.92801410236954e-05\\
128	2.6444789042055e-06\\
256	3.62718751185126e-07\\
512	4.97507816198007e-08\\
1024	6.82385529751072e-09\\
2048	9.35965217134044e-10\\
4096	1.28377705782294e-10\\
8192	1.76083844145299e-11\\
16384	2.41517948775046e-12\\
32768	3.31267867666342e-13\\
65536	4.54369543567204e-14\\
};
\addlegendentry{\scriptsize $\mathcal{O}(N^{-2.87})$}

\end{axis}
\end{tikzpicture}
		\caption{Weight sequence $\bsgamma=(\gamma_j)_{j=1}^d$ with $\gamma_j = 1/j^2$.}    
	\end{subfigure}
	\begin{subfigure}[b]{0.5\textwidth}  
		\centering 
		\begin{tikzpicture}

\begin{axis}[%
width=0.8\textwidth,
height=0.8\textwidth,
at={(0\textwidth,0\textwidth)},
scale only axis,
xmode=log,
xmin=42.6666666666667,
xmax=98304,
xminorticks=true,
xlabel style={font=\color{white!15!black}},
xlabel={Number of points $N=2^m$},
ymode=log,
ymin=1e-15,
ymax=1,
yminorticks=true,
ylabel style={font=\color{white!15!black}},
ylabel={Worst-case error $e_{N,d,\alpha,\mathbf{\gamma^{\alpha}}}(\mathbf{g})$},
axis background/.style={fill=white},
axis x line*=bottom,
axis y line*=left,
xmajorgrids,
xminorgrids,
ymajorgrids,
yminorgrids,
minor grid style={opacity=0},
legend style={at={(0.03,0.03)}, anchor=south west, legend cell align=left, align=left, draw=white!15!black}
]
\addplot [color=mycolor1, line width=0.9pt, mark=o, mark options={solid, mycolor1}, forget plot]
  table[row sep=crcr]{%
64	0.0126899976166382\\
128	0.00477659788511209\\
256	0.00178815599820943\\
512	0.000668474790747192\\
1024	0.000249237970511458\\
2048	9.28170293056045e-05\\
4096	3.45197186571936e-05\\
8192	1.2873866299598e-05\\
16384	4.78116956633218e-06\\
32768	1.77061615607392e-06\\
65536	6.5556313386119e-07\\
};
\addplot [color=mycolor2, line width=0.9pt, mark=triangle, mark options={solid, mycolor2}, forget plot]
  table[row sep=crcr]{%
64	0.0126883996208652\\
128	0.00477596267110885\\
256	0.00179072880659425\\
512	0.000667928409719759\\
1024	0.000249528553805854\\
2048	9.3065957832874e-05\\
4096	3.45161015000271e-05\\
8192	1.28671774179377e-05\\
16384	4.77544778112109e-06\\
32768	1.76952645794544e-06\\
65536	6.55128760870105e-07\\
};
\addplot [color=red, dashed, line width=0.9pt]
  table[row sep=crcr]{%
64	0.0253799952332763\\
128	0.00945816387379709\\
256	0.00352469978979\\
512	0.00131352224109415\\
1024	0.000489500037094447\\
2048	0.000182418141710241\\
4096	6.79803389240538e-05\\
8192	2.53337000196499e-05\\
16384	9.44091139943581e-06\\
32768	3.51827044540921e-06\\
65536	1.31112626772238e-06\\
};
\addlegendentry{\scriptsize $\mathcal{O}(N^{-1.42})$}

\addplot [color=mycolor3, line width=0.9pt, mark=o, mark options={solid, mycolor3}, forget plot]
  table[row sep=crcr]{%
64	0.000670898194025467\\
128	0.00017515348061943\\
256	4.55119069468944e-05\\
512	1.18108770457534e-05\\
1024	3.05917873296994e-06\\
2048	7.9290021588215e-07\\
4096	2.05145986437275e-07\\
8192	5.33957622522715e-08\\
16384	1.38059638553584e-08\\
32768	3.56823605092433e-09\\
65536	9.23133855954468e-10\\
};
\addplot [color=mycolor4, line width=0.9pt, mark=triangle, mark options={solid, mycolor4}, forget plot]
  table[row sep=crcr]{%
64	0.000670708492223081\\
128	0.000175125591950862\\
256	4.54652696243151e-05\\
512	1.18025820939106e-05\\
1024	3.06345545838972e-06\\
2048	7.94986360971004e-07\\
4096	2.05111977025723e-07\\
8192	5.33005202542887e-08\\
16384	1.376958062069e-08\\
32768	3.56403343791969e-09\\
65536	9.2060582961137e-10\\
};
\addplot [color=darkgray, dashed, line width=0.9pt]
  table[row sep=crcr]{%
64	0.00134179638805093\\
128	0.000347974503699696\\
256	9.02418998168099e-05\\
512	2.34028654282531e-05\\
1024	6.06917752579167e-06\\
2048	1.57394896588627e-06\\
4096	4.08179747039337e-07\\
8192	1.05855214815863e-07\\
16384	2.74519414179377e-08\\
32768	7.11924385515442e-09\\
65536	1.84626771190894e-09\\
};
\addlegendentry{\scriptsize $\mathcal{O}(N^{-1.95})$}

\addplot [color=mycolor5, line width=0.9pt, mark=o, mark options={solid, mycolor5}, forget plot]
  table[row sep=crcr]{%
64	5.54227384997228e-06\\
128	7.02970375585347e-07\\
256	8.90214387577478e-08\\
512	1.12699649048777e-08\\
1024	1.42626908543636e-09\\
2048	1.80563695042224e-10\\
4096	2.28480287388036e-11\\
8192	2.90012290801162e-12\\
16384	3.66945012929552e-13\\
32768	4.64392771694506e-14\\
65536	5.88705903865588e-15\\
};
\addplot [color=mycolor6, line width=0.9pt, mark=triangle, mark options={solid, mycolor6}, forget plot]
  table[row sep=crcr]{%
64	5.54168171547323e-06\\
128	7.02954709134525e-07\\
256	8.89967547930034e-08\\
512	1.12675174304235e-08\\
1024	1.42670185953979e-09\\
2048	1.80786104448765e-10\\
4096	2.28473605289084e-11\\
8192	2.8964272716977e-12\\
16384	3.66433575093592e-13\\
32768	4.64189489401149e-14\\
65536	5.87953689698115e-15\\
};
\addplot [color=black, dashed, line width=0.9pt]
  table[row sep=crcr]{%
64	1.10845476999446e-05\\
128	1.40390919226144e-06\\
256	1.77811587217585e-07\\
512	2.25206592585292e-08\\
1024	2.8523455719348e-09\\
2048	3.61262748498571e-10\\
4096	4.57555966348825e-11\\
8192	5.7951577684527e-12\\
16384	7.33983513082518e-13\\
32768	9.29624039589859e-14\\
65536	1.17741180773118e-14\\
};
\addlegendentry{\scriptsize $\mathcal{O}(N^{-2.98})$}

\end{axis}
\end{tikzpicture}
		\caption{Weight sequence $\bsgamma=(\gamma_j)_{j=1}^d$ with $\gamma_j = 1/j^3$.}
	\end{subfigure}
	\vspace{-16pt}
	\vskip\baselineskip
	\hspace{-0.25cm}
	\centering
	\begin{subfigure}[b]{0.5\textwidth}
		\centering
		\begin{tikzpicture}

\begin{axis}[%
width=0.8\textwidth,
height=0.8\textwidth,
at={(0\textwidth,0\textwidth)},
scale only axis,
xmode=log,
xmin=42.6666666666667,
xmax=98304,
xminorticks=true,
xlabel style={font=\color{white!15!black}},
xlabel={Number of points $N=2^m$},
ymode=log,
ymin=0.0000025,
ymax=1000000000000,
yminorticks=true,
ylabel style={font=\color{white!15!black}},
ylabel={Worst-case error $e_{N,d,\alpha,\mathbf{\gamma^{\alpha}}}(\mathbf{g})$},
axis background/.style={fill=white},
axis x line*=bottom,
axis y line*=left,
xmajorgrids,
xminorgrids,
ymajorgrids,
yminorgrids,
minor grid style={opacity=0},
legend style={at={(0.03,0.03)}, anchor=south west, legend cell align=left, align=left, draw=white!15!black}
]
\addplot [color=mycolor1, line width=0.9pt, mark=o, mark options={solid, mycolor1}, forget plot]
  table[row sep=crcr]{%
64	7243051451.09542\\
128	3621525725.07947\\
256	1810762862.09742\\
512	905381430.634987\\
1024	452690714.944207\\
2048	226345357.140171\\
4096	113172678.280671\\
8192	56586338.8980775\\
16384	28293169.2538933\\
32768	14146584.4657356\\
65536	7073292.10942922\\
};
\addplot [color=mycolor2, line width=0.9pt, mark=triangle, mark options={solid, mycolor2}, forget plot]
  table[row sep=crcr]{%
64	7243051451.09558\\
128	3621525725.08\\
256	1810762862.09309\\
512	905381430.631196\\
1024	452690714.936109\\
2048	226345357.135407\\
4096	113172678.269114\\
8192	56586338.8874833\\
16384	28293169.2380297\\
32768	14146584.4572523\\
65536	7073292.09608587\\
};
\addplot [color=red, dashed, line width=0.9pt]
  table[row sep=crcr]{%
64	14486102902.1908\\
128	7243051417.99143\\
256	3621525692.44372\\
512	1810762837.94586\\
1024	905381414.834932\\
2048	452690705.348467\\
4096	226345351.639734\\
8192	113172675.302617\\
16384	56586337.3926836\\
32768	28293168.5670293\\
65536	14146584.2188584\\
};
\addlegendentry{\scriptsize $\mathcal{O}(N^{-1})$}

\addplot [color=mycolor3, line width=0.9pt, mark=o, mark options={solid, mycolor3}, forget plot]
  table[row sep=crcr]{%
64	10959.7682987585\\
128	5479.51648321991\\
256	2739.45924660214\\
512	1369.47883692103\\
1024	684.551730472485\\
2048	342.13332081745\\
4096	170.958227155933\\
8192	85.4091372596137\\
16384	42.6553497390161\\
32768	21.2937135206517\\
65536	10.6260986771556\\
};
\addplot [color=mycolor4, line width=0.9pt, mark=triangle, mark options={solid, mycolor4}, forget plot]
  table[row sep=crcr]{%
64	10959.7656361317\\
128	5479.51083503195\\
256	2739.44210618333\\
512	1369.45795946705\\
1024	684.527632517571\\
2048	342.109645550625\\
4096	170.941903843961\\
8192	85.3932036462252\\
16384	42.6417266330489\\
32768	21.285663384164\\
65536	10.6181010963113\\
};
\addplot [color=darkgray, dashed, line width=0.9pt]
  table[row sep=crcr]{%
64	21919.536597517\\
128	10951.8784632406\\
256	5471.997153771\\
512	2734.0289568935\\
1024	1366.03037740633\\
2048	682.523492405557\\
4096	341.016075037777\\
8192	170.385290364584\\
16384	85.1313157874084\\
32768	42.5350152714936\\
65536	21.2521973543112\\
};
\addlegendentry{\scriptsize $\mathcal{O}(N^{-1})$}

\addplot [color=mycolor5, line width=0.9pt, mark=o, mark options={solid, mycolor5}, forget plot]
  table[row sep=crcr]{%
64	8.48914060749824\\
128	4.09084925434897\\
256	1.96232776439108\\
512	0.923424072567116\\
1024	0.43274258406824\\
2048	0.200080337268139\\
4096	0.0894074266654823\\
8192	0.0412869601241565\\
16384	0.0182044469434334\\
32768	0.00792331283735132\\
65536	0.00344790766370751\\
};
\addplot [color=mycolor6, line width=0.9pt, mark=triangle, mark options={solid, mycolor6}, forget plot]
  table[row sep=crcr]{%
64	8.46766996208136\\
128	4.0806523512221\\
256	1.93692177499604\\
512	0.906951394555028\\
1024	0.418189445550568\\
2048	0.190921292344443\\
4096	0.0859625367018254\\
8192	0.0379177033253264\\
16384	0.0166350232569472\\
32768	0.00714293789454527\\
65536	0.00304175692103127\\
};
\addplot [color=black, dashed, line width=0.9pt]
  table[row sep=crcr]{%
64	16.9782812149965\\
128	7.77611925973374\\
256	3.56149306139376\\
512	1.63117776395706\\
1024	0.747085801309042\\
2048	0.34216822154539\\
4096	0.156714384921236\\
8192	0.0717758017688485\\
16384	0.0328735981840485\\
32768	0.0150562366554476\\
65536	0.00689581532741503\\
};
\addlegendentry{\scriptsize $\mathcal{O}(N^{-1.13})$}

\end{axis}
\end{tikzpicture}
		\caption{Weight sequence $\bsgamma=(\gamma_j)_{j=1}^d$ with $\gamma_j = (0.95)^j$.}
	\end{subfigure}
	\begin{subfigure}[b]{0.5\textwidth}  
		\centering 
		\begin{tikzpicture}
		\begin{axis}[%
		width=0.8\textwidth,
		height=0.8\textwidth,
		at={(0\textwidth,0\textwidth)},
		scale only axis,
		xmode=log,
		xmin=42.6666666666667,
		xmax=98304,
		xminorticks=true,
		xlabel style={font=\color{white!15!black}},
		xlabel={Number of points $N=2^m$},
		ymode=log,
		ymin=1e-12,
		ymax=1,
		yminorticks=true,
		ylabel style={font=\color{white!15!black}},
		ylabel={Worst-case error $e_{N,d,\alpha,\mathbf{\gamma^{\alpha}}}(\mathbf{g})$},
		axis background/.style={fill=white},
		axis x line*=bottom,
		axis y line*=left,
		xmajorgrids,
		xminorgrids,
		ymajorgrids,
		yminorgrids,
		minor grid style={opacity=0},
		legend style={at={(0.03,0.03)}, anchor=south west, legend cell align=left, align=left, draw=white!15!black}
		]
		\addplot [color=mycolor1, line width=0.9pt, mark=o, mark options={solid, mycolor1}, forget plot]
		  table[row sep=crcr]{%
		64	0.272681567416126\\
		128	0.124050410116249\\
		256	0.0561138472748829\\
		512	0.0255182768803572\\
		1024	0.0113402504908469\\
		2048	0.00502224881264253\\
		4096	0.00221693909672786\\
		8192	0.000974682437453944\\
		16384	0.000428585794683711\\
		32768	0.000187037169109124\\
		65536	8.04522888423455e-05\\
		};
		\addplot [color=mycolor2, line width=0.9pt, mark=triangle, mark options={solid, mycolor2}, forget plot]
		  table[row sep=crcr]{%
		64	0.274712481779285\\
		128	0.124417585308346\\
		256	0.0559772905760882\\
		512	0.0254974002775106\\
		1024	0.0113237967059648\\
		2048	0.00501661471403071\\
		4096	0.00221532998966387\\
		8192	0.000973192798708207\\
		16384	0.000429486397615613\\
		32768	0.000187353189500543\\
		65536	8.07839905190598e-05\\
		};
		\addplot [color=red, dashed, line width=0.9pt]
		  table[row sep=crcr]{%
		64	0.545363134832252\\
		128	0.241921262154038\\
		256	0.107315462568266\\
		512	0.0476047801821899\\
		1024	0.0211173212318125\\
		2048	0.0093675730525565\\
		4096	0.00415542406784002\\
		8192	0.00184333221494031\\
		16384	0.000817696003864883\\
		32768	0.000362727211794674\\
		65536	0.000160904577684691\\
		};
		\addlegendentry{\scriptsize $\mathcal{O}(N^{-1.17})$}
		
		\addplot [color=mycolor3, line width=0.9pt, mark=o, mark options={solid, mycolor3}, forget plot]
		  table[row sep=crcr]{%
		64	0.00922552613029372\\
		128	0.00315102779994744\\
		256	0.00107940206967596\\
		512	0.000392073069501438\\
		1024	0.000129587861373335\\
		2048	4.25897021514503e-05\\
		4096	1.38517285186983e-05\\
		8192	4.64681421290642e-06\\
		16384	1.57007785183912e-06\\
		32768	5.1782408174543e-07\\
		65536	1.60491868732256e-07\\
		};
		\addplot [color=mycolor4, line width=0.9pt, mark=triangle, mark options={solid, mycolor4}, forget plot]
		  table[row sep=crcr]{%
		64	0.00945313308880435\\
		128	0.00316951406710338\\
		256	0.00109688680255328\\
		512	0.000381105276844875\\
		1024	0.000123143143042282\\
		2048	4.29822557594017e-05\\
		4096	1.38276588312289e-05\\
		8192	4.80757770222531e-06\\
		16384	1.54392319759943e-06\\
		32768	5.05565921043526e-07\\
		65536	1.61957971127494e-07\\
		};
		\addplot [color=darkgray, dashed, line width=0.9pt]
		  table[row sep=crcr]{%
		64	0.0184510522605874\\
		128	0.00616690599052078\\
		256	0.00206116859671776\\
		512	0.000688905585819817\\
		1024	0.000230253316943355\\
		2048	7.69577008151646e-05\\
		4096	2.57216173620352e-05\\
		8192	8.59695121750025e-06\\
		16384	2.87336403445476e-06\\
		32768	9.60366142091337e-07\\
		65536	3.20983737464512e-07\\
		};
		\addlegendentry{\scriptsize $\mathcal{O}(N^{-1.58})$}
		
		\addplot [color=mycolor5, line width=0.9pt, mark=o, mark options={solid, mycolor5}, forget plot]
		  table[row sep=crcr]{%
		64	6.48376721823084e-05\\
		128	1.11767657363802e-05\\
		256	2.07776522515426e-06\\
		512	5.09018715878943e-07\\
		1024	8.87804760035051e-08\\
		2048	1.49846164370422e-08\\
		4096	2.47167714218581e-09\\
		8192	4.96668952503188e-10\\
		16384	1.02068159225192e-10\\
		32768	1.93226221652266e-11\\
		65536	3.03614698079325e-12\\
		};
		\addplot [color=mycolor6, line width=0.9pt, mark=triangle, mark options={solid, mycolor6}, forget plot]
		  table[row sep=crcr]{%
		64	6.5512951579925e-05\\
		128	1.08514549896987e-05\\
		256	2.07633554526966e-06\\
		512	4.1358368190811e-07\\
		1024	8.0791551612054e-08\\
		2048	1.40823660331657e-08\\
		4096	2.45320848306968e-09\\
		8192	4.39452153240671e-10\\
		16384	8.4772239825475e-11\\
		32768	1.53147764523675e-11\\
		65536	2.64712334202646e-12\\
		};
		\addplot [color=black, dashed, line width=0.9pt]
		  table[row sep=crcr]{%
		64	0.000129675344364617\\
		128	2.39831899840215e-05\\
		256	4.43564198443429e-06\\
		512	8.20362921995962e-07\\
		1024	1.51724446235168e-07\\
		2048	2.80611263236512e-08\\
		4096	5.18984797829761e-09\\
		8192	9.59851779546646e-10\\
		16384	1.77522625431713e-10\\
		32768	3.28324468545062e-11\\
		65536	6.07229396158648e-12\\
		};
		\addlegendentry{\scriptsize $\mathcal{O}(N^{-2.43})$}
		
		\end{axis}
		\end{tikzpicture}
		\caption{Weight sequence $\bsgamma=(\gamma_j)_{j=1}^d$ with $\gamma_j = (0.7)^j$.}
	\end{subfigure}
	\vskip\baselineskip
	\begin{tikzpicture}
	\hspace{0.05\linewidth}
	\begin{customlegend}[
	legend columns=5,legend style={align=left,draw=none,column sep=1.5ex},
	legend entries={Algorithm \ref{alg:cbc}, standard CBC \qquad, $\alpha=1.5$, $\alpha=2$, $\alpha=3$}
	]
	\addlegendimage{color=gray, mark=o,solid,line width=1.0pt,line legend}
	\addlegendimage{color=gray, mark=triangle,solid,line width=1.0pt}  
	\addlegendimage{color=mycolor-alpha1, only marks,mark=square*, solid, line width=5pt}
	\addlegendimage{color=mycolor-alpha2, only marks,mark=square*, solid, line width=5pt}
	\addlegendimage{color=mycolor-alpha3, only marks,mark=square*, solid, line width=5pt}
	\end{customlegend}
	\end{tikzpicture}
	\caption{Convergence results for the worst-case error $e_{b^m,d,\alpha,\bsgamma^\alpha}(\bsg)$ in the weighted space 
	$W_{d,\bsgamma}^\alpha$ for smoothness parameters $\alpha=1.5,2,3$ with dimension $d=100$. The generating vectors 
	$\bsg$ were constructed via the component-by-component algorithms for polynomial lattice rules with $N=2^m$ points using $K_{N,d,\bsgamma}$ and $e_{N,d,\alpha,\bsgamma^\alpha}$, respectively, as quality functions.}
	\label{fig:cbc}
\end{figure}

The graphs in Figure \ref{fig:cbc} show that the CBC algorithm with smoothness-independent quality function $K_{b^m,d,\bsgamma}$ (Algorithm~\ref{alg:cbc}) constructs good generators of polynomial lattice rules with worst-case errors that are almost identical to those of polynomial lattice rules constructed by the standard CBC algorithm with the worst-case error $e_{b^m,d,\alpha,\bsgamma^\alpha}$ as the quality measure. We stress that while Algorithm \ref{alg:cbc} needs to be run only once (as the quality function is independent of the particular~$\alpha$), the standard CBC construction returns one generating vector for each choice of $\alpha$ individually and is therefore run three times, 
once for each weight sequence considered.

\subsection{Computational complexity}

We illustrate the computational complexity of the component-by-component construction in Algorithm \ref{alg:cbc} which was stated in Proposition \ref{prop:cost-CBC-alg}. To this end, we measure and compare the computation times of implementations of the smoothness-independent fast CBC construction (Algorithm \ref{alg:cbc}) and the standard fast CBC algorithm for polynomial lattice rules with irreducible modulus $p \in \F_2[x]$. For $m,d \in \N$ we use the weight sequence $\bsgamma = (\gamma_j)^d_{j=1}$ with $\gamma_j = j^{-2}$ and note that the chosen weights do not influence the computation times.

\begin{table}[H]
	\captionof{table}{Computation times (in seconds) for constructing the generating vector $\bsg$ of a polynomial lattice rule with $2^m$ points in $d$ dimensions using the fast implementation of Algorithm~\ref{alg:cbc} (\textbf{bold font}) and the standard fast CBC construction (normal font). For the standard fast CBC algorithm we constructed the polynomial lattice rules with smoothness parameter $\alpha=2$.}	
	\label{tab:cbc_times}
	\centering
	\begin{tabular}{p{2cm}p{2cm}p{2cm}p{2cm}p{2cm}p{2cm}} 
		\toprule[1.2pt]
		& $d=50$ & $d=200$ & $d=500$ & $d=1000$ & $d=2000$ \\ 
		\toprule[1.2pt]
		\multirow{2}{6em}{$m=10$}
		& 0.007 & 0.026 & 0.051 & 0.1 & 0.2 \\
		& \textbf{0.007} & \textbf{0.025} & \textbf{0.056} & \textbf{0.106} & \textbf{0.207} \\
		\midrule
		\multirow{2}{6em}{$m=12$}
		& 0.024 & 0.088 & 0.195 & 0.384 & 0.75 \\
		& \textbf{0.025} & \textbf{0.09} & \textbf{0.198} & \textbf{0.389} & \textbf{0.755} \\
		\midrule
		\multirow{2}{6em}{$m=14$}
		& 0.112 & 0.422 & 0.939 & 1.834 & 3.938 \\
		& \textbf{0.115} & \textbf{0.413} & \textbf{0.928} & \textbf{1.834} & \textbf{3.751} \\
		\midrule
		\multirow{2}{6em}{$m=16$}
		& 0.589 & 1.99 & 4.672 & 9.544 & 19.126 \\
		& \textbf{0.589} & \textbf{2.023} & \textbf{4.789} & \textbf{9.787} & \textbf{19.385} \\
		\midrule
		\multirow{2}{6em}{$m=18$}
		& 2.716 & 9.202 & 22.046 & 43.436 & 86.343 \\
		& \textbf{2.751} & \textbf{9.152} & \textbf{21.809} & \textbf{43.001} & \textbf{85.279} \\
		\midrule
		\multirow{2}{6em}{$m=20$}
		& 13.512 & 44.219 & 103.711 & 204.551 & 404.873 \\
		& \textbf{13.794} & \textbf{43.782} & \textbf{103.704} & \textbf{204.414} & \textbf{406.196} \\
		\midrule
	\end{tabular}
\end{table}

In Figure \ref{fig:times_cbc} below we display the measured computation times for the smoothness-independent CBC construction (Algorithm \ref{alg:cbc}) graphically. Since the timings for both considered algorithms are almost identical, we only display the results for Algorithm \ref{alg:cbc} here. \\

The timings displayed in Table \ref{tab:cbc_times} and Figure \ref{fig:cbc} confirm that the computational complexity of both 
algorithms depends on $m$ and $d$ in a similar way and the measured times are in accordance with Proposition \ref{prop:cost-CBC-alg}. 
Additionally, the linear dependence of the construction cost on the dimension~$d$ is well observable. The measured construction time 
for the fast implementation of Algorithm \ref{alg:cbc} is slightly higher than for the standard fast CBC algorithm but in general both algorithms 
can be executed in comparable time.

\begin{figure}[H]
	\centering
	\textbf{Computation times for the fast implementation of Algorithm \ref{alg:cbc}.} \par\medskip 
	\hspace{-0.25cm}
	\centering
	\begin{tikzpicture}
	\begin{axis}[%
	width=0.8\textwidth,
	height=0.8\textwidth,
	at={(0\textwidth,0\textwidth)},
	scale only axis,
	xmin=10,
	xmax=20,
	xlabel style={font=\color{white!15!black}},
	xlabel={$m$},
	ymode=log,
	ymin=0.001,
	ymax=1000,
	yminorticks=true,
	ylabel style={font=\color{white!15!black}},
	ylabel={Computation time in seconds},
	axis background/.style={fill=white},
	xmajorgrids,
	ymajorgrids,
	yminorgrids,
	minor grid style={opacity=0},
	legend style={at={(0.97,0.03)}, anchor=south east, legend cell align=left, align=left, draw=white!15!black}
	]
	
	\addplot [color=mycolor1, line width=1.1pt, mark=o, mark options={solid, mycolor1}]
	  table[row sep=crcr]{%
	10	0.007\\
	11	0.012\\
	12	0.025\\
	13	0.054\\
	14	0.115\\
	15	0.251\\
	16	0.589\\
	17	1.259\\
	18	2.751\\
	19	5.984\\
	20	13.794\\
	};
	\addlegendentry{Algorithm 1 with $d=50$}
	
	\addplot [color=mycolor2-time, line width=1.1pt, mark=o, mark options={solid, mycolor2-time}]
	  table[row sep=crcr]{%
	10	0.207\\
	11	0.381\\
	12	0.755\\
	13	1.665\\
	14	3.751\\
	15	8.078\\
	16	19.385\\
	17	40.96\\
	18	85.279\\
	19	183.638\\
	20	406.196\\
	};
	\addlegendentry{Algorithm 1 with $d=2000$}
	
	\end{axis}
	\end{tikzpicture}
	\caption{Computation times (in seconds) for constructing the generating vector $\bsg$ of a polynomial lattice rule with $2^m$ points in 
		$d \in \{50,2000\}$ dimensions using the fast implementation of Algorithm \ref{alg:cbc}.}
	\label{fig:times_cbc}
\end{figure}
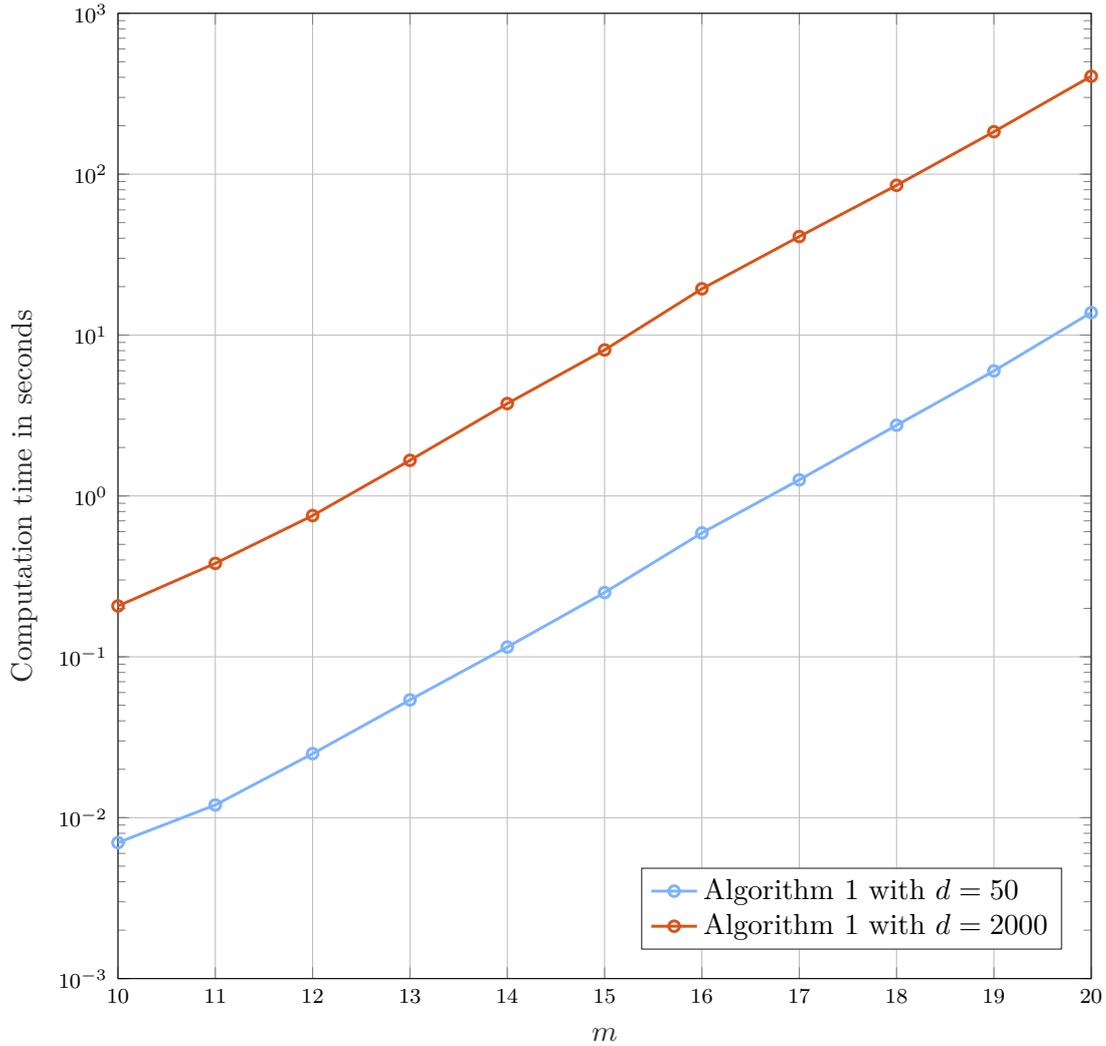

\section{Conclusion}

In this paper we studied the construction of good polynomial lattice rules based on a quality measure which is independent of the smoothness of the underlying function space and can be used as a search criterion for polynomial lattice rules in weighted Walsh spaces. Based on these findings, we presented a component-by-component algorithm for constructing good polynomial lattice rules that is independent of the value of the smoothness parameter $\alpha$ and yields an error convergence rate that is arbitrarily close to the optimal convergence rate in the studied function space. 
Under suitable summability conditions on the weight sequences, these error bounds were shown to be independent of the dimension. Furthermore, we studied a fast implementation of the algorithm which has the same computational complexity as the state-of-the-art standard fast CBC algorithm. 
Numerical experiments with respect to the error behavior and the computational complexity of the algorithm confirmed our theoretical findings.

\bigskip

\bigskip

\begin{small}
	\noindent\textbf{Authors' addresses:}\\
	
	\noindent Adrian Ebert\\
	Johann Radon Institute for Computational and Applied Mathematics (RICAM) \\
	Austrian Academy of Sciences\\
	Altenbergerstr. 69, 4040 Linz, Austria.\\
	\texttt{adrian.ebert@oeaw.ac.at}
	
	\medskip
	
	\noindent Peter Kritzer\\
	Johann Radon Institute for Computational and Applied Mathematics (RICAM) \\
	Austrian Academy of Sciences\\
	Altenbergerstr. 69, 4040 Linz, Austria.\\
	\texttt{peter.kritzer@oeaw.ac.at}
	
	\medskip
	
	\noindent Onyekachi Osisiogu\\
	Johann Radon Institute for Computational and Applied Mathematics (RICAM)\\
	Austrian Academy of Sciences\\
	Altenbergerstr. 69, 4040 Linz, Austria.\\
	\texttt{onyekachi.osisiogu@oeaw.ac.at}
	
	\medskip
	
	\noindent Tetiana Stepaniuk\\
	Institute of Mathematics\\
	University of L\"ubeck\\
	Ratzeburger Allee 160, 23562 L\"ubeck, Germany,\\
	\texttt{stepaniuk@math.uni-luebeck.de}
	
\end{small}


\begin{thebibliography}{99}

	\bibitem{DG21} J.~Dick, T.~Goda.
	{\it Stability of lattice rules and polynomial lattice rules constructed by the component-by-component algorithm.}
	J. Comput. Appl. Math. 382, 113062, 2021.
	
	\bibitem{DKPS05} J.~Dick, F.Y.~Kuo, F.~Pillichshammer, I.H.~Sloan. 
	{\it Construction algorithms for polynomial lattice rules for multivariate integration.} 
	Math. Comp. 74, 1895--1921, 2005.
	
	\bibitem{DKS13} J.~Dick, F.Y.~Kuo, I.H.~Sloan.
	{\it High-dimensional integration: The quasi-Monte Carlo way.} 
	Acta Numer. 22, 133--288, 2013.
	
	\bibitem{DP05} J.~Dick, F.~Pillichshammer. 
	{\it Multivariate integration in weighted Hilbert spaces based on Walsh functions and weighted Sobolev spaces.} 
	J. Complexity 21, 149--195, 2005. 
	
	\bibitem{DP10} 	J.~Dick, F.~Pillichshammer. 
	{\it Digital Nets and Sequences: Discrepancy Theory and Quasi-Monte Carlo Integration}. 
	Cambridge University Press, Cambridge 2010.

	\bibitem{EKNO2020} 
	A.~Ebert, P.~Kritzer, D.~Nuyens, O.~Osisiogu.
	{\it Digit-by-digit and component-by-component constructions of lattice rules for periodic functions with unknown smoothness.} 
	Submitted, 2020. 
	
	\bibitem{EKOS2020} 	A.~Ebert, P.~Kritzer,  O.~Osisiogu, T.~Stepaniuk.
	{\it Component-by-component digit-by-digit construction of good polynomial lattice rules in weighted Walsh spaces.} 
	Submitted, 2020. 

	\bibitem{H62} E.~Hlawka. 
	{\it Zur angen\"{a}herten Berechnung mehrfacher Integrale. }
	Monatshefte f\"ur Mathematik 66, 140--151, 1962.
	
	\bibitem{Kor59} N.M.~Korobov. 
	{\it Approximate evaluation of repeated integrals. }
	Dokl. Akad. Nauk SSSR, 124, 1207--1210, 1959. In Russian.
	
	\bibitem{Kor63} N.M.~Korobov. 
	{\it Number-theoretic methods in approximate analysis. }
	Goz. Izdat. Fiz.-Math., 1963. In Russian.
	
	\bibitem{N87} H.~Niederreiter. 
	{it Point sets and sequences with small discrepancy. }
	Monatsh. Math. 104, 273--337, 1987.
	
	\bibitem{N92a} H.~Niederreiter. 
	{\it Low-discrepancy point sets obtained by digital constructions over finite fields.}
	Czechoslovak Math. J.  42, 143--166, 1992.
	
	\bibitem{Nied92} H.~Niederreiter. 
	{\it Random Number Generation and Quasi-Monte Carlo Methods}. 
	Society for Industrial and Applied Mathematics, Philadelphia, 1992.
	
	\bibitem{N14} D.~Nuyens.
	{\it The construction of good lattice rules and polynomial lattice rules.}
	In: P.~Kritzer, H.~Niederreiter, F.~Pillichshammer, A.~Winterhof (eds.). 
	{\em Uniform Distribution and Quasi-Monte Carlo Methods: Discrepancy, Integration and Applications}, 223--255,
	De Gruyter, Berlin, 2014.
	
	\bibitem{NC06b}	D.~Nuyens, R.~Cools. 
	{\it Fast algorithms for component-by-component construction of rank-$1$ lattice rules in shift-invariant reproducing kernel Hilbert spaces. }
	Math. Comp. 75, 903--920, 2006.
	
	\bibitem{NC06a} D.~Nuyens, R.~Cools. 
	{\it Fast component-by-component construction, a reprise for different kernels.} In: H.~Niederreiter, D.~Talay (eds.). 
	{\em Monte Carlo and Quasi-Monte Carlo Methods 2004}, 373-387, Springer, Berlin, 2006. 
	
	\bibitem{SJ94}	I.H.~Sloan, S.~Joe.
	{\it Lattice Methods for Multiple Integration}.
	Clarendon Press, Oxford, 1994.
	
	\bibitem{SR02} I.H.~Sloan, V.A.~Reztsov.
	{\it Component-by-component construction of good lattice rules.}
	Math. Comp. 71, 263--273, 2002.
	
	\bibitem{SW98} I.H.~Sloan, H.~Wo\'zniakowski. 
	{\it When are quasi-Monte Carlo algorithms efficient for high-dimensional problems? }
	J. Complexity 14, 1--33, 1998.
	
\end{thebibliography}
\end{document}